\numberwithin{equation}{section}
\newcommand{\R}{{\mathbb R}}
\newcommand{\E}{{\bm E}}
\newcommand{\be}{\begin{eqnarray}}
	\newcommand{\ben}{\begin{eqnarray*}}
		\newcommand{\en}{\end{eqnarray}}
	\newcommand{\enn}{\end{eqnarray*}}
\newtheorem{theorem}{Theorem}[section]
\newtheorem{lemma}[theorem]{Lemma}
\newtheorem{definition}[theorem]{Definition}
\newtheorem{remark}[theorem]{Remark}
\definecolor{Tao}{rgb}{1.000,0.000,0.000}
\definecolor{LJW}{rgb}{0.000,0.000,1.000}
\begin{document}
\renewcommand{\theequation}{\arabic{section}.\arabic{equation}}
\begin{titlepage}
	\title{On the direct and inverse electromagnetic scattering in a parallel-plate waveguide}
	
	\author{
		Jiawei Liang\thanks{School of Mathematical Sciences, University of Electronic Science and Technology of China, Chengdu, Sichuan 611731, China. Email:{\tt 202411110414@std.uestc.edu.cn}},\;
		Maojun Li\thanks{School of Mathematical Sciences, University of Electronic Science and Technology of China, Chengdu, Sichuan 611731, China. Email:{\tt limj@uestc.edu.cn}},\;
		Tao Yin\thanks{State Key Laboratory of Mathematical Sciences and Institute of Computational Mathematics and Scientific/Engineering Computing, Academy of Mathematics and Systems Science, Chinese Academy of Sciences, Beijing 100190, China. Email:{\tt yintao@lsec.cc.ac.cn}}
	}
\end{titlepage}
\maketitle

\begin{abstract}
This paper devotes to providing rigorous theoretical analysis of the wellposedness of the direct problem and the uniqueness of the inverse problem of electromagnetic scattering in a parallel-plate waveguide. The direct problem is reduced to an equivalent boundary value problem on a bounded domain by introducing an exact transparent boundary condition in terms of the electric-to-magnetic Calder\'on operator which can be explicitly represented as a series expansion. Then the wellopsedness of the reduced problem in appropriate Sobolev spaces is proved via the variational approach provided by some necessary properties of the Calder\'on operator and Helmholtz decomposition. Relying on the Green's representation formula and a reciprocity relation, the probe method, finally, is utilized to show the uniqueness of the inverse obstacle problem.
\\
{\bf Keywords:} Maxwell's equations, electromagnetic scattering, parallel-plate waveguide
\end{abstract}

\section{Introduction}
Scattering problems in waveguides arise from diverse scientific and engineering fields, such as underwater acoustics, integrated optics, geophysics, ultrasonic nondestructive testing and antenna systems. A great number of mathematical and numerical results are available with regard to its direct and inverse problems, especially for acoustics~\cite{MR2402567,MR2796088,MR1043753,MR2069154,MR4000070,MR4027036,MR4216871,MR4573256}, electromagnetics~\cite{MR4753915,MR4641642,MR4259674,MR3975368}, elastodynamics~\cite{MR2644191,MR4606484,MR4052749,MR2785835} as well as multiphysical problems \cite{MR2065370,MR4674007}. 

The scattering problems in waveguides are imposed in an unbounded domain which is located between two parallel plates or in a closed tube. It is well known that the studies on wave propagation phenomenon in waveguides are much more challenging than in free space. Specifically, because of the presence of the boundaries of waveguides, only a finite number of modes can propagate over long distances, which are called propagating modes, while the other modes named evanescent modes will decay exponentially \cite{MR1043753,MR3975368,MR1387405}. Additionally, related to the type of boundary conditions, another difficulty arising in the analysis of the waveguide scattering problems is that the possible existence of trapped modes which can destroy the uniqueness of the solution \cite{MR4052749,MR1121393,MR1265871,MR2158985,MR2144620,MR2284257,MR3439069}. In general, the scattering problems can only be shown to be wellposed in the Fredholm sense except for a countable set of frequencies, which might correspond in the part to the trapped modes. 

This work is interested in the wave scattering problems in a parallel-plate waveguide~\cite{MR4052749,MR1387405} and some works have been carried out for the acoustic and elastic cases. The problem of acoustic wave scattering by a Dirichlet obstacle in a parallel-plate waveguide is shown in \cite{MR2402567,MR1625273} to be uniquely solvable except possibly for an infinite series of exceptional values of the wavenumber with no finite accumulation point and some geometric conditions are given to ensure the
uniqueness for all wavenumbers. The corresponding analysis has been extended to the medium scattering problems \cite{MR2796088} in a planar waveguide.
For the elastic scattering problem, extending these techniques are not straightforward, mainly due to the additional non-standard properties of the elastic modes, called the Lamb modes \cite{MR2644191}. A new expansion formulation is introduced in \cite{fraser1976orthogonality} to complete the family of Lamb modes, under which the wellposedness can be established for time-harmonic elastic problems in two- and three-dimensional parallel-plate waveguides \cite{MR4606484}. However, the related wellposedness analysis for the electromagnetic wave scattering problems in a parallel-plate waveguide is still unclear.

As to the inverse scattering, compared with 
the problems in free space, fewer methods are available to establish the uniqueness. Since a part of information is only detectable in propagation modes far away from the scatterers \cite{MR1387405}, the uniqueness can generally be constructed in the sense of near-field imaging \cite{MR1809490,MR2069154,MR3975368}. For the acoustic scattering in a parallel-plate waveguide, the uniqueness for the identification of inhomogeneity has been shown by means of the Dirichlet-to-Neumann map~\cite{MR1809490} or the outgoing Green operator~\cite{MR2069154}. Alternatively, the probe method \cite{MR1853728} is another effective way for the uniqueness analysis and its applications the scattering problems in tubular waveguides for elastic waves \cite{MR2785835} and electromagnetic waves \cite{MR3975368} has been considered.

The aim of this paper is to establish both the wellposdeness of the direct scattering problem and the uniqueness of the inverse obstacle problem for the electromagnetic scattering by an impedance obstacle in a PEC parallel-plate waveguide. This problem is also related to the reduced model studied in~\cite{MR4641642} for the signal-propagation problems in axon, however, only the wellposedness in the case of TE/TM is given in that work. By imposing an appropriate radiation condition according to the mode expansion, uniqueness of original model problem is proved. An exact transparent boundary condition (TBC) is introduced to truncate the unbounded domain by defining an electro-to-magnetic Calder\'on operator. This Calder\'on operator can be expressed as a Fourier series which is shown to be well-defined. Then relying on some newly derived properties of the Calder\'on operator, Helmholtz decomposition and compactness, the wellposedness of the corresponding variational formulation on the truncated cylindrical domain is deduced. Concerning the inverse obstacle problem with near-field measurement due to point source, the uniqueness result is established via the probe method.

The rest of this paper is organized as follows. In Section \ref{sec:2}, the model problem and its uniqueness is presented. The wellposedness analysis for the direct problem, which is truncated by means of an exact TBC, is established in Section \ref{sec 3} through the variational approach. With the help of a reciprocity relation, the uniqueness of the inverse obstacle problem is deduced by the probe method in Section \ref{sec 4}. Final conclusions are given in Section \ref{sec 5}.

\section{Preliminaries}
\label{sec:2}

\subsection{Model problem}
\label{sec:2.1}

Let $D$ be a bounded Lipschitz domain included in a parallel-plate waveguide $\R^3_Z=\{\bm x=(x_1,x_2,x_3)^\top\in\R^3: x_3\in(0,Z)\}$ whose upper and lower boundaries are denoted by $\Gamma^+$ and $\Gamma^-$, respectively. Denote $\Omega=\R^3_Z\backslash\overline{D}$. Consider the following electromagnetic scattering problem in $\Omega$ as
\begin{equation}\label{model:E}
	\left\{
	\begin{aligned}
		&\nabla\times\nabla\times\E-k^2\E=\boldsymbol{0} \quad && \text{in}~\Omega, \\
		&\bm\nu\times \E=\mathbf{0} \quad && \mbox{on}~\Gamma^\pm, \\
		&\bm\nu\times\nabla\times\E-\mathrm{i}\eta\E_T=\boldsymbol{f} \quad && \text{on}~\partial D.
		\end{aligned}
		\right.
\end{equation}
Here $\bm{\nu} $ represents the unit outward normal vector to $ \Gamma^\pm $ and $ \partial D $, and $\E_T=\bm\nu\times(\bm E\times\bm\nu)$. The impedance coefficient $ \eta $ is set to be a positive constant.

To ensure the uniqueness of the problem (\ref{model:E}), an appropriate radiation condition at infinity should be imposed. Denote by $(r,\theta,z)$ the cylindrical coordinate with $x_1=r\cos\theta, x_2=r\sin\theta$ and $r=\sqrt{x_1^2+x_2^2}$. Letting $E_r=\E\cdot\boldsymbol{e}_r$, $E_\theta=\E\cdot\boldsymbol{e}_\theta$, $E_z=\E\cdot\boldsymbol{e}_z$ with $\boldsymbol{e}_r=(\cos\theta, \sin\theta,0)^\top$, $\boldsymbol{e}_\theta = (-\sin\theta,\cos\theta,0)^\top$, $\boldsymbol{e}_z=(0,0,1)^\top$, the boundary condition $\bm\nu\times\bm E=0$ on $\Gamma^\pm$ implies that $E_r=E_\theta=0$ on $\Gamma^\pm$. Moreover, it can be deduced from the divergence free condition $\nabla\cdot\bm E=0$ that $\partial_z E_z=0$ on $\Gamma^\pm$. Then for any $\bm x\in \R^3_Z$ with $r>r_D:= \max_{\bm x\in \partial D} \sqrt{x_1^2+x_2^2}$, the field $\bm E$ can be expressed as
\begin{align}
\label{fieldexp}
\begin{bmatrix}
E_r\\ E_\theta\\ E_z
\end{bmatrix}
	=\sum_{m=0}^{\infty}\sum_{n=-\infty}^{\infty} \begin{bmatrix}
	    E_{r,nm}(r)\sin(m\pi z/Z)\\
        E_{\theta,nm}(r)\sin(m\pi z/Z)\\
        E_{z,nm}(r)\cos(m\pi z/Z)
	\end{bmatrix}e^{\mathrm{i}n\theta}.
\end{align}
Then the radiation condition for the considered parallel-plate waveguide problem is to impose the mode expansion coefficients $ \E_{nm}=(E_{r,nm},E_{\theta,nm},E_{z,nm})^\top $ to satisfy
\begin{equation}\label{radiation1}
	\partial_r\E_{nm}-\mathrm{i}k_m\E_{nm}=o(r^{-1/2})\quad\mbox{as}\quad r\rightarrow\infty,
\end{equation}
where $k_m=(k^2-m^2\pi^2/Z^2)^{1/2}$ if $k\ge m\pi/Z$ and $k_m=\mathrm{i}(m^2\pi^2/Z^2-k^2)^{1/2}$ if $k< m\pi/Z$. For simplicity, we assume that $ k \neq m\pi/Z $ holds for all $ m\geq0 $.
\begin{remark}
In particular, from the solution expansion discussed in Section~\ref{sec 3} and the asymptotic behavior of the modified Bessel function, it is known that for $k<m\pi/Z$, each mode coefficient $\E_{nm}$ is exponential decay, i.e.,
\begin{equation}\label{radiation2}
\E_{nm}=O(e^{-|k_m|r}) \quad\mbox{as}\quad r\rightarrow\infty\quad \text{for}~k<m\pi/Z.
\end{equation}
\end{remark}

\begin{theorem}
The electromagnetic scattering problem (\ref{model:E})-(\ref{radiation2}) has at most one solution.
\end{theorem}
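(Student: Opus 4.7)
My plan is to carry out the standard energy argument adapted to the waveguide geometry. Let $\E=\E_1-\E_2$ denote the difference of two solutions of (\ref{model:E})--(\ref{radiation2}); it satisfies the homogeneous problem, so it suffices to prove $\E\equiv\boldsymbol{0}$. Fix $R>r_D$, set $\Omega_R=\Omega\cap\{r<R\}$, and write $\Gamma_R=\{r=R\}\cap\R^3_Z$ for its lateral boundary (with outward normal $\boldsymbol{e}_r$). Testing $\nabla\times\nabla\times\E-k^2\E=\boldsymbol{0}$ against $\overline{\E}$ and integrating by parts on $\Omega_R$ yields
\begin{equation*}
\int_{\Omega_R}\bigl(|\nabla\times\E|^2-k^2|\E|^2\bigr)\,dx+\int_{\partial\Omega_R}(\bm\nu\times\nabla\times\E)\cdot\overline{\E}\,dS=0,
\end{equation*}
where $\bm\nu$ is the outward normal of $\Omega_R$. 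The PEC condition makes $\overline{\E}_T$ vanish on $\Gamma^\pm$, which kills that piece; on $\partial D$ the outward normal of $\Omega_R$ is opposite to the $\bm\nu$ of (\ref{model:E}), so the impedance condition converts the corresponding contribution into $-\mathrm{i}\eta\int_{\partial D}|\E_T|^2\,dS$. Taking imaginary parts therefore yields, for every $R>r_D$,
\begin{equation*}
\operatorname{Im}\int_{\Gamma_R}(\boldsymbol{e}_r\times\nabla\times\E)\cdot\overline{\E}\,dS=\eta\int_{\partial D}|\E_T|^2\,dS\ge 0.
\end{equation*}

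The crucial step is to evaluate the left-hand side via (\ref{fieldexp}) and show that it is in fact nonpositive. Inserting the expansion and using the $L^2$-orthogonality of $\{e^{\mathrm{i}n\theta}\sin(m\pi z/Z)\}$ and $\{e^{\mathrm{i}n\theta}\cos(m\pi z/Z)\}$, the surface integral decouples into a double sum over $(n,m)$. For $r>r_D$ each $\E_{nm}(r)$ is a linear combination of outgoing Hankel functions $H_n^{(1)}(k_m r)$ when $k>m\pi/Z$ and of modified Bessel functions $K_n(|k_m|r)$ when $k<m\pi/Z$, the second-kind Hankel and growing modified-Bessel pieces being ruled out by (\ref{radiation1})--(\ref{radiation2}). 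Since $K_n$ is real-valued the evanescent modes contribute nothing to the imaginary part, whereas the Wronskian identity $W[H_n^{(1)},H_n^{(2)}](x)=-4\mathrm{i}/(\pi x)$ makes every propagating mode contribute a strictly negative multiple of the squared amplitude of its Hankel coefficients, producing
\begin{equation*}
\operatorname{Im}\int_{\Gamma_R}(\boldsymbol{e}_r\times\nabla\times\E)\cdot\overline{\E}\,dS=-\sum_{0\le m\pi/Z<k}\sum_{n\in\Z}c_{nm}\,|\bm a_{nm}|^2\le 0,\qquad c_{nm}>0.
\end{equation*}
Combined with the previous identity the left side is nonpositive and the right side nonnegative, so both must vanish: every propagating amplitude $\bm a_{nm}$ equals zero and $\E_T\equiv\boldsymbol{0}$ on $\partial D$.

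To conclude, the impedance condition now promotes $\E_T=\boldsymbol{0}$ to $\bm\nu\times\nabla\times\E=\boldsymbol{0}$ on $\partial D$, so $\E$ carries zero Cauchy data there. Since $\E$ is real-analytic in the connected open set $\Omega$ as a solution of $\nabla\times\nabla\times\E-k^2\E=\boldsymbol{0}$, unique continuation from $\partial D$ forces $\E\equiv\boldsymbol{0}$ throughout $\Omega$. The main obstacle of this plan lies in the Bessel-function bookkeeping of the middle paragraph: one must resolve both $\overline{\E}$ and $\boldsymbol{e}_r\times\nabla\times\E$ in the cylindrical basis $(\boldsymbol{e}_r,\boldsymbol{e}_\theta,\boldsymbol{e}_z)$, carefully track the radial derivatives produced by the curl acting on Bessel-type expansions, and verify that the Wronskian identity contracts all surviving cross terms into precisely the sign-definite combination of propagating amplitudes needed to close the energy estimate.
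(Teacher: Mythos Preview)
Your overall structure --- energy identity on a truncated cylinder, take the imaginary part, then unique continuation from the resulting Cauchy data on $\partial D$ --- is exactly the paper's. The difference is in how the lateral term $\operatorname{Im}\int_{\Gamma_R}(\boldsymbol{e}_r\times\nabla\times\E)\cdot\overline{\E}_T\,dS$ is controlled. You propose to substitute the explicit Hankel/$K_n$ expansions and invoke a Wronskian identity; the paper instead first uses the divergence-free condition $\nabla\cdot\E=0$ to collapse the mode-by-mode integrand to the diagonal form
\[
\operatorname{Im}\int_{\Gamma_R}(\boldsymbol{e}_r\times\nabla\times\E)\cdot\overline{\E}_T\,dS
=-\pi RZ\sum_{m,n}\sum_{\xi\in\{r,\theta,z\}}\operatorname{Im}\bigl[\partial_r E_{\xi,nm}\,\overline{E_{\xi,nm}}\bigr],
\]
and then lets $R\to\infty$, extracting the sign purely from the abstract radiation condition (\ref{radiation1}) via $|\partial_r E_{\xi,nm}-\mathrm{i}k_m E_{\xi,nm}|^2\ge 0$ for propagating modes and from the decay (\ref{radiation2}) for evanescent ones, with no Bessel identities at all. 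This is precisely the device that removes the obstacle you flag: in your direct route the $E_r$ and $E_\theta$ components involve $H_{n\pm 1}^{(1)}$ rather than $H_n^{(1)}$, so the boundary integrand produces genuine cross terms such as $A_{nm}^+\overline{A_{nm}^-}\,H_{n+1}^{(1)\prime}\,\overline{H_{n-1}^{(1)}}$ which are not of Wronskian type and whose imaginary parts do not vanish individually; consequently your assertions that ``the Wronskian identity contracts all surviving cross terms'' and that ``$K_n$ real-valued $\Rightarrow$ evanescent modes contribute nothing'' are not automatic at fixed $R$ and need justification you have not yet supplied. The divergence-free simplification is exactly what makes those cross terms disappear.
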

\begin{proof}
Let $ B_R =\{\bm{x}\in\R_Z^3:r<R\} $ be a cylinder with radius $ R>r_D $ and denote $\Gamma_R=\{\bm{x}\in\R_Z^3:r=R\}$. It is sufficient to show that the problem (\ref{model:E})-(\ref{radiation2}) only admits trivial solution if $ \bm{f}=\mathbf{0} $. Multiplying (\ref{model:E}) by $ \overline{\bm{E}} $, integrating on $ B_R\backslash\overline{D} $, taking integration by parts and substituting boundary conditions yields
    \begin{equation}\label{integration by parts}
		\int_{B_R\backslash\overline{D}}|\nabla\times\bm{E}|^2-k^2|\bm{E}|^2\mathrm{d}\bm{x}
		-\mathrm{i}\eta\int_{\partial D}|\bm{E}_T|^2\mathrm{d}s
		+\int_{\Gamma_R}\bm{\nu}\times\nabla\times\bm{E}\cdot\overline{\bm{E}}_T\mathrm{d}s
		=0.
	\end{equation}
	Taking the imaginary part of (\ref{integration by parts}), we have
	\begin{equation}\label{ImBT}
		-\eta\int_{\partial D}|\bm{E}_T|^2\mathrm{d}s+\mathrm{Im}\int_{\Gamma_R}\bm{\nu}\times\nabla\times\bm{E}\cdot\overline{\bm{E}}_T=0.
	\end{equation}
    It follows from the series expansions (\ref{fieldexp}) that
	\begin{align*}
		&\quad \int_{\Gamma_R}(\bm{\nu}\times\nabla\times\bm{E})\cdot\overline{\bm{E}}_T\mathrm{d}s \\
		&=\int_{\Gamma_R}\left(-\frac{1}{r}\left[\partial_r(rE_\theta)-\partial_{\theta}E_r\right]\bm{e}_\theta+\left[\partial_zE_r-\partial_rE_z\right]\bm{e}_z\right)\cdot\left(\overline{E_\theta}\bm{e}_\theta+\overline{E_z}\bm{e}_z\right)\mathrm{d}s \\
		&=\int_{0}^{Z}\int_{0}^{2\pi}\left(-|E_\theta|^2-R\partial_rE_\theta\overline{E_\theta}+\partial_\theta E_r\overline{E_\theta}+R\partial_z E_r\overline{E_z}-R\partial_r E_z\overline{E_z}\right)\mathrm{d}\theta\mathrm{d}z \\
		&=\pi Z\sum_{m=0}^{\infty}\sum_{n=-\infty}^{\infty}\big(-|E_{\theta,nm}|^2-RE_{\theta,nm}^\prime\overline{E_{\theta,nm}}+\mathrm{i}nE_{r,nm}\overline{E_{\theta,nm}} \\
        &\quad+R\frac{m\pi}{Z}E_{r,nm}\overline{E_{z,nm}}-RE_{z,nm}^\prime\overline{E_{z,nm}}\big).
	\end{align*}
    The divergence free condition gives
	\begin{equation*}
		\frac{1}{r}\frac{\partial(rE_r)}{\partial r}+\frac{1}{r}\frac{\partial E_\theta}{\partial\theta}+\frac{\partial E_z}{\partial z}=0,
	\end{equation*}
	i.e.,
	\begin{align*}
		\frac{1}{r}\left(E_{r,nm}+r\partial_rE_{r,nm}+\mathrm{i}nE_{\theta,nm}\right)-\frac{m\pi}{Z}E_{z,nm}&=0.
	\end{align*}
Then we get
\begin{align*}
&\int_{\Gamma_R}\bm{\nu}\times\nabla\times\bm{E}\cdot\overline{\bm{E}}_T\mathrm{d}s \\
=&~\pi Z\sum_{m=0}^{\infty}\sum_{n=-\infty}^{\infty}\bigg(\left[-E_{\theta,nm}-R\partial_rE_{\theta,nm}+\mathrm{i}nE_{r,nm}\right]\overline{E_{\theta,nm}} \\		&+E_{r,nm}\left[\overline{E_{r,nm}}+R\overline{\partial_rE_{r,nm}}-\mathrm{i}n\overline{E_{\theta,nm}}\right]-R\partial_rE_{z,nm}\overline{E_{z,nm}}\bigg) \\
=&~\pi Z\sum_{m=0}^{\infty}\sum_{n=-\infty}^{\infty}\left(|E_{r,nm}|^2-|E_{\theta,nm}|^2\right) \\
&+\pi Z\sum_{m=0}^{\infty}\sum_{n=-\infty}^{\infty}R\left(E_{r,nm}\overline{\partial_rE_{r,nm}}-\partial_rE_{\theta,nm}\overline{E_{\theta,nm}}-\partial_rE_{z,nm}\overline{E_{z,nm}}\right),
\end{align*}
which implies that
	\begin{equation}\label{sums of ImBT}
		\mathrm{Im}\int_{\Gamma_R}\bm{\nu}\times\nabla\times\bm{E}\cdot\overline{\bm{E}}_T\mathrm{d}s
		=-\pi RZ\sum_{m=0}^{\infty}\sum_{n=-\infty}^{\infty}\sum_{\xi\in\{r,\theta,z\}}\mathrm{Im}[\partial_rE_{\xi,nm}\overline{E_{\xi,nm}}].
	\end{equation}
For $ k>m\pi/Z $ with $k_m>0$, we know
    \begin{align*}
        &\left(\partial_rE_{\xi,nm}-\mathrm{i}k_mE_{\xi,nm}\right)\overline{\left(\partial_rE_{\xi,nm}-\mathrm{i}k_mE_{\xi,nm}\right)} \\
        &=\left(|\partial_rE_{\xi,nm}|^2+k_m^2|E_{\xi,nm}|^2-2k_m\mathrm{Im}[\partial_rE_{\xi,nm}\overline{E_{\xi,nm}}]\right),
    \end{align*}
    which, together with the radiation condition (\ref{radiation1}), give
    \begin{equation}\label{sum1 of ImBT}
        \lim_{R\rightarrow\infty}2R\mathrm{Im}[\partial_rE_{\xi,nm}\overline{E_{\xi,nm}}]
        \geq0.
    \end{equation}
For $ k<m\pi/Z $ with $ k_m=\mathrm{i}|k_m| $, it easily follows from (\ref{radiation2}) that
    \begin{equation}\label{sum2 of ImBT}
        \lim_{R\to\infty}R\mathrm{Im}[\partial_rE_{\xi,nm}\overline{E_{\xi,nm}}]
        =0.
    \end{equation}
    Combining (\ref{ImBT})-(\ref{sum2 of ImBT}) with the impedance boundary condition in (\ref{model:E}), we obtain that
    \begin{equation*}
        \bm{\nu}\times\nabla\times\bm{E}
        =\bm{E}_T
        =\mathbf{0} \quad \text{on}~\partial D.
    \end{equation*}
Then an application of the unique continuation result \cite[Theorem 4.13]{MR2059447} completes the proof.
\end{proof}

\subsection{Sobolev spaces}
\label{sec:2.2}

The study of the wellposedness of the electromagnetic scattering problem (\ref{model:E})-(\ref{radiation2}) relies on an appropriate transparent boundary condition to truncate the infinite domain as well as the Sobolev spaces on the truncated domain whose notations are introduced as follows. 

Letting $\Gamma_R=\{\boldsymbol{x}\in\Omega:x_1^2+x_2^2=R^2\} $ and $ \Gamma_R^{\pm}=\{\boldsymbol{x}\in\Gamma^{\pm}:x_1^2+x_2^2<R^2\}$, we 
denote the truncated domain by $C_R=\{\boldsymbol{x}\in\mathbb{R}^3:x_1^2+x_2^2<R^2, 0<x_3<Z\} $ with $R>0$ such that $\overline{D}\subset C_R$. Denote $ \Omega_R=C_R\backslash\overline{D} $ and $ \Sigma_R=\Gamma_R^+\cup\Gamma_R^- $. Let $ \boldsymbol{H}(\mathrm{curl},\Omega_R)\subset\boldsymbol{L}^2(\Omega_R) $ be the subspace whose functions have square-integrable curls where $ \boldsymbol{L}^2(\Omega_R):=L^2(\Omega_R)^3 $. Throughout this paper, we denote vector-valued quantities by boldface notion. For any $v\in L^2(\Gamma_R)$ which admits a Fourier series expansion 
\begin{equation*}
	v=\sum_{m=0}^{\infty}\sum_{n=-\infty}^{\infty}[v_{c,nm}\cos(m\pi z/Z)+v_{s,nm}\sin(m\pi z/Z)]e^{\mathrm{i}n\theta},
\end{equation*}
an equivalent norm of $v$ on $L^2(\Gamma_R)$ can be obtained from the Parseval identity as
\begin{equation*}
	\|v\|_{L^2(\Gamma_R)}^2=\sum_{m=0}^{\infty}\sum_{n=-\infty}^{\infty}\left(|v_{c,nm}|^2+|v_{s,nm}|^2\right).
\end{equation*}
In addition, the norm of $v$ on $H^s(\Gamma_R)$ can be defined as
\begin{equation*}
	\|v\|_{H^s(\Gamma_R)}^2=\sum_{m=0}^{\infty}\sum_{n=-\infty}^{\infty}(1+n^2+m^2)^s\left(|v_{c,nm}|^2+|v_{s,nm}|^2\right).
\end{equation*}

Moreover, the following tangential functional spaces are defined:
\begin{align*}
	\boldsymbol{L}_t^2(\Gamma_R)
	&=\{\boldsymbol{v}\in\boldsymbol{L}^2(\Gamma_R):\boldsymbol{v}\cdot\boldsymbol{e}_r=0\}, \\
	\boldsymbol{H}_t^s(\Gamma_R)
	&=\{\boldsymbol{v}\in\boldsymbol{H}^s(\Gamma_R):\boldsymbol{v}\cdot\boldsymbol{e}_r=0\}, \\
	\boldsymbol{H}^{-1/2}(\mathrm{Div},\Gamma_R)
	&=\{\boldsymbol{v}\in\boldsymbol{H}_t^{-1/2}(\Gamma_R):\nabla_{\Gamma_R}\cdot\boldsymbol{v} \in H^{-1/2}(\Gamma_R)\}, \\
	\boldsymbol{H}^{-1/2}(\mathrm{Curl},\Gamma_R)
	&=\{\boldsymbol{v}\in\boldsymbol{H}_t^{-1/2}(\Gamma_R):\nabla_{\Gamma_R}\times\boldsymbol{v} \in H^{-1/2}(\Gamma_R)\}.
\end{align*}
Here the surface divergence and the surface scalar curl on $ \Gamma_R $ are denoted by
\begin{equation*}
	\nabla_{\Gamma_R}\cdot\boldsymbol{u}=\frac{1}{R}\partial_\theta u_\theta+\partial_zu_z, \quad \nabla_{\Gamma_R}\times\boldsymbol{u}=\frac{1}{R}\partial_\theta u_z-\partial_zu_\theta.
\end{equation*}
For any $\bm v\in \boldsymbol{H}^{-1/2}(\mathrm{Div},\Gamma_R)$ or $\bm v\in \boldsymbol{H}^{-1/2}(\mathrm{Curl},\Gamma_R)$ whose expansion is given by
\begin{align*}
\bm{v}
=&~\sum_{m=0}^{\infty}\sum_{n=-\infty}^{\infty}[v_{\theta,c,nm}\cos(m\pi z/Z)+v_{\theta,s,nm}\sin(m\pi z/Z)]e^{\mathrm{i}n\theta}\bm{e}_\theta \\
&+\sum_{m=0}^{\infty}\sum_{n=-\infty}^{\infty}[v_{z,c,nm}\cos(m\pi z/Z)+v_{z,s,nm}\sin(m\pi z/Z)]e^{\mathrm{i}n\theta}\bm{e}_z,
\end{align*}
its norm on $ \boldsymbol{H}^{-1/2}(\mathrm{Div},\Gamma_R) $ or $ \boldsymbol{H}^{-1/2}(\mathrm{Curl},\Gamma_R) $ can be characterized, respectively, as
\begin{align*}
	&\|\boldsymbol{v}\|_{\boldsymbol{H}^{-1/2}(\mathrm{Div},\Gamma_R)}^2 \\
	&=\sum_{m=0}^{\infty}\sum_{n=-\infty}^{\infty}
    (1+n^2+m^2)^{-1/2}\bigg(|v_{\theta,s,nm}|^2+|v_{\theta,c,nm}|^2+|v_{z,s,nm}|^2+|v_{z,c,nm}|^2 \\
    &\quad+\left|\frac{\mathrm{i}n}{R}v_{\theta,s,nm}-\frac{m\pi}{Z}v_{z,c,nm}\right|^2+\left|\frac{\mathrm{i}n}{R}v_{\theta,c,nm}+\frac{m\pi}{Z}v_{z,s,nm}\right|^2\bigg), \\
	&\|\boldsymbol{v}\|_{\boldsymbol{H}^{-1/2}(\mathrm{Curl},\Gamma_R)}^2 \\
	&=\sum_{m=0}^{\infty}\sum_{n=-\infty}^{\infty}
    (1+n^2+m^2)^{-1/2}\bigg(|v_{\theta,s,nm}|^2+|v_{\theta,c,nm}|^2+|v_{z,s,nm}|^2+|v_{z,c,nm}|^2 \\
    &\quad+\left|\frac{\mathrm{i}n}{R}v_{z,s,nm}+\frac{m\pi}{Z}v_{\theta,c,nm}\right|^2+\left|\frac{\mathrm{i}n}{R}v_{z,c,nm}-\frac{m\pi}{Z}v_{\theta,s,nm}\right|^2\bigg).
\end{align*}

The following lemma states that $ \boldsymbol{H}^{-1/2}(\mathrm{Div},\Gamma_R) $ and $ \boldsymbol{H}^{-1/2}(\mathrm{Curl},\Gamma_R) $ are dual spaces with respect to the scalar product in $ \boldsymbol{L}_t^2(\Gamma_R) $. In the following, denote by $ \langle\cdot,\cdot\rangle_{\Gamma_R} $ the duality pairing between $ \boldsymbol{H}^{-1/2}(\mathrm{Div},\Gamma_R)$ and $ \boldsymbol{H}^{-1/2}(\mathrm{Curl},\Gamma_R)$ with
\begin{align*}
    \langle\bm{u},\bm{v}\rangle_{\Gamma_R}
    =\pi RZ\sum_{m=0}^{\infty}\sum_{n=-\infty}^{\infty}
    &\big(u_{\theta,s,nm}\overline{v}_{\theta,s,nm}+u_{\theta,c,nm}\overline{v}_{\theta,c,nm} \\
    &+u_{z,s,nm}\overline{v}_{z,s,nm}+u_{z,c,nm}\overline{v}_{z,c,nm}\big).
\end{align*}
\begin{lemma}
\label{lem-dual}
    The spaces $ \boldsymbol{H}^{-1/2}(\mathrm{Div},\Gamma_R) $ and $ \boldsymbol{H}^{-1/2}(\mathrm{Curl},\Gamma_R) $ are mutually adjoint with respect to the scalar product in $ \boldsymbol{L}_t^2(\Gamma_R) $.
\end{lemma}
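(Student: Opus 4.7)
The plan is to work entirely in the Fourier basis on $\Gamma_R$, where the two norms and the $\boldsymbol{L}_t^2$-pairing are all explicit weighted $\ell^2$-sums over the modes $(n,m)$. I would introduce, mode by mode, a surface Helmholtz decomposition $\bm{v}=\nabla_{\Gamma_R}\phi+\nabla_{\Gamma_R}^\perp\psi$, where $\nabla_{\Gamma_R}^\perp\psi:=-\partial_z\psi\,\bm{e}_\theta+R^{-1}\partial_\theta\psi\,\bm{e}_z$ is the surface vector curl of a scalar. A direct Fourier calculation then yields three facts: (i) at every mode with $\lambda_{nm}^2:=n^2/R^2+m^2\pi^2/Z^2>0$ the decomposition is a linear isomorphism, the relevant $2\times 2$ blocks having determinant $-\lambda_{nm}^2$; (ii) $\nabla_{\Gamma_R}\cdot\bm{v}=\Delta_{\Gamma_R}\phi$ and $\nabla_{\Gamma_R}\times\bm{v}=\Delta_{\Gamma_R}\psi$, so the derivative terms in the Div- and Curl-norms see only $\phi$ and only $\psi$, respectively; (iii) the two summands are $\boldsymbol{L}_t^2(\Gamma_R)$-orthogonal on each mode, as one checks by a direct trigonometric integration.

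Noting that $1+n^2+m^2\asymp 1+\lambda_{nm}^2$, facts (i)--(iii) imply that each mode contribution to $\|\bm{v}\|_{\boldsymbol{H}^{-1/2}(\mathrm{Div},\Gamma_R)}^2$ is equivalent, uniformly in $(n,m)$, to $\lambda_{nm}^3|\phi_{nm}|^2+\lambda_{nm}|\psi_{nm}|^2$ up to lower-order terms, while the analogous Curl-norm is obtained by swapping $\phi$ and $\psi$. The pairing itself then collapses diagonally to
\begin{equation*}
\langle\bm{u},\bm{v}\rangle_{\Gamma_R}=\pi RZ\sum_{n,m}\lambda_{nm}^2\bigl(\phi^u_{nm}\,\overline{\phi^v_{nm}}+\psi^u_{nm}\,\overline{\psi^v_{nm}}\bigr),
\end{equation*}
and a modewise Cauchy-Schwarz with the asymmetric split $\lambda_{nm}^{3/2}\times\lambda_{nm}^{1/2}$ on the $\phi$-part and its reverse on the $\psi$-part gives the continuity estimate $|\langle\bm{u},\bm{v}\rangle_{\Gamma_R}|\le C\|\bm{u}\|_{\boldsymbol{H}^{-1/2}(\mathrm{Div},\Gamma_R)}\|\bm{v}\|_{\boldsymbol{H}^{-1/2}(\mathrm{Curl},\Gamma_R)}$.

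To identify the duals, given a continuous linear functional $L$ on $\boldsymbol{H}^{-1/2}(\mathrm{Div},\Gamma_R)$ I would test it against each Fourier basis field, obtaining Helmholtz potentials $\phi^v_{nm},\psi^v_{nm}$ whose weight pattern is exactly the reciprocal of the Div-weight, namely the Curl-weight. Inverting the Helmholtz decomposition modewise then produces the representer $\bm{v}\in\boldsymbol{H}^{-1/2}(\mathrm{Curl},\Gamma_R)$ satisfying $L(\bm{u})=\langle\bm{u},\bm{v}\rangle_{\Gamma_R}$ with $\|\bm{v}\|_{\boldsymbol{H}^{-1/2}(\mathrm{Curl},\Gamma_R)}\le C\|L\|$; the argument is symmetric in the two spaces, and the constant mode $(n,m)=(0,0)$ dualizes trivially since both norms reduce there to the plain $\boldsymbol{L}_t^2$-norm. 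The main obstacle is the bookkeeping in item (ii): one must confirm that the specific derivative combinations $|\mathrm{i}nv_{\theta,\ast,nm}/R\pm m\pi v_{z,\ast,nm}/Z|^2$ appearing in the stated norms translate, uniformly in $(n,m)$, into $\lambda_{nm}^4|\phi_{nm}|^2$ (respectively $\lambda_{nm}^4|\psi_{nm}|^2$) of the Helmholtz potentials; this is an explicit $4\times 4$ linear-algebra computation whose singular values must stay bounded away from zero as $(n,m)$ varies.
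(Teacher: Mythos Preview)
Your approach is correct and is a genuinely different route from the paper's. You use the surface Helmholtz decomposition $\bm{v}=\nabla_{\Gamma_R}\phi+\nabla_{\Gamma_R}^\perp\psi$, which diagonalizes both norms simultaneously in the potentials $(\phi,\psi)$: the Div-norm becomes $\sim\lambda_{nm}^3|\phi_{nm}|^2+\lambda_{nm}|\psi_{nm}|^2$ and the Curl-norm is its mirror image, after which continuity and surjectivity follow by modewise Cauchy--Schwarz and a Riesz-type construction. The paper instead performs a purely algebraic change of variables in each $2\times 2$ block, replacing $(\alpha_n,\beta_m)$ by $(\alpha_n,\beta_m\mp\mathrm{i})$; this ``$\pm\mathrm{i}$'' shift keeps the transformation invertible at the zero mode $(n,m)=(0,0)$ and produces equivalent diagonal norms with weights $(1+|\alpha_n|^2+\beta_m^2)^{\mp 1/2}$ uniformly in $(n,m)$. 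The paper then writes down explicit isometries $\mathscr{C}:\bm{H}^{-1/2}(\mathrm{Div},\Gamma_R)\to\bm{H}^{-1/2}(\mathrm{Curl},\Gamma_R)$ and $\mathscr{D}$ in the reverse direction to exhibit the duality. Your argument is more geometric and perhaps more transparent, but requires the separate treatment of $(0,0)$ that you flag; the paper's shifted rotation handles all modes at once and yields concrete isomorphisms, at the price of a less intuitive change of coordinates. The ``bookkeeping'' you mention in item~(ii) is indeed the crux, and it does go through: the two $2\times 2$ blocks $(v_{\theta,s},v_{z,c})$ and $(v_{\theta,c},v_{z,s})$ decouple, and in each block the Helmholtz map has determinant $-\lambda_{nm}^2$ with singular values bounded away from zero once $\lambda_{nm}\ge c>0$.
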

\begin{proof}
The proof is given in Appendix~\ref{appendixA}.
\end{proof}

We know from \cite{MR1944792} that the following surjective mapping properties
\begin{align*}
	\gamma&:H^1(\Omega_R) \to H^{1/2}(\partial\Omega_R), \quad &&\gamma u=u~\text{on}~\partial\Omega_R, \\
	\gamma_t&:\boldsymbol{H}(\mathrm{curl},\Omega_R)\to\boldsymbol{H}^{-1/2}(\mathrm{Div},\partial\Omega_R), \quad &&\gamma_t\boldsymbol{u}=\boldsymbol{n}\times\boldsymbol{u}~\text{on}~\partial\Omega_R, \\
	\gamma_T&:\boldsymbol{H}(\mathrm{curl},\Omega_R)\to\boldsymbol{H}^{-1/2}(\mathrm{Curl},\partial\Omega_R), \quad &&\gamma_T\boldsymbol{u}=\boldsymbol{n}\times(\boldsymbol{u}\times\boldsymbol{n})~\text{on}~\partial\Omega_R.
\end{align*}
In addition, we denote
\begin{align*}
	H_{\Sigma_R}^1(\Omega_R)
	&=\left\{u \in H^1(\Omega_R):\gamma u=0~\text{on}~\Sigma_R\right\}, \\
	\boldsymbol{H}_{\Sigma_R}(\mathrm{curl},\Omega_R)
	&=\left\{\boldsymbol{u}\in\boldsymbol{H}(\mathrm{curl},\Omega_R):\gamma_t\boldsymbol{u}=\mathbf{0}~\text{on}~\Sigma_R\right\},
\end{align*}
and
\begin{equation*}
	\boldsymbol{X}=\left\{\boldsymbol{u}\in\boldsymbol{H}(\mathrm{curl},\Omega_R):\gamma_t\boldsymbol{u}=\mathbf{0}~\text{on}~\Sigma_R,~\gamma_T\boldsymbol{u}\in\boldsymbol{L}_t^2(\Gamma_D)~\text{on}~\Gamma_D\right\}.
\end{equation*}
equipped with the norm
\begin{equation*}
	\|\boldsymbol{u}\|_{\boldsymbol{X}}^2
	=\|\boldsymbol{u}\|_{\boldsymbol{H}(\mathrm{curl},\Omega_R)}^2+\|\gamma_T\boldsymbol{u}\|_{\boldsymbol{L}_t^2(\Gamma_D)}^2.
\end{equation*}

\section{Wellposedness analysis of the direct scattering problem}\label{sec 3}

Now we investigate the wellposedness of the electromagnetic scattering problem (\ref{model:E})-(\ref{radiation2}) by introducing an appropriate TBC to truncate the infinite domain. 

\subsection{TBC}
\label{sec:3.1}

The required TBC
\begin{align}
\label{TBC}
\mathscr{T}[\bm{\nu}\times\bm{E}]=\bm{\nu}\times\nabla\times\bm{E} \quad \text{on}~\Gamma_R,
\end{align}
is introduced based on an electro-to-magnetic Calder\'on operator $\mathscr{T}$ defined as follows.
\begin{definition}
For any $ \bm{g}\in\bm{H}^{-1/2}(\mathrm{Div},\Gamma_R) $, the Calder\'on operator $\mathscr{T}$ acting on $ \bm{g} $ is defined as
	\begin{equation*}
		\mathscr{T}[\bm{g}]=(\bm{\nu}\times\nabla\times\bm{E})|_{\Gamma_R},
	\end{equation*}
where $ \bm{E}\in\bm{H}_{loc}(\R_Z^2\backslash\overline{C_R}) $ is the unique radiating solution to the boundary value problem
	\begin{align*}
		\nabla\times\nabla\times\bm{E}-k^2\bm{E}&=\mathbf{0} \quad \text{in}~\R_Z^2\backslash\overline{C_R}, \\
		\bm{\nu}\times\bm{E}&=\mathbf{0} \quad \text{on}~\Gamma^\pm\backslash\Gamma_R^\pm, \\
		\bm{\nu}\times\bm{E}&=\bm{g} \quad \text{on}~\Gamma_R,
	\end{align*}
	together with the radiation condition (\ref{radiation1})-(\ref{radiation2}).
\end{definition}

Considering the vector Helmholtz equation
\begin{equation*}
	\Delta\boldsymbol{E}+k^2\boldsymbol{E}=\mathbf{0} \quad r \geq R,
\end{equation*}
and letting $ E_{\pm,nm}=E_{r,nm}\pm\mathrm{i}E_{\theta,nm} $, the field expansion 
(\ref{fieldexp}) gives
\begin{align*}
r^2\frac{\mathrm{d}^2E_{\pm,nm}}{\mathrm{d}r^2}+r\frac{\mathrm{d}E_{\pm,nm}}{\mathrm{d}r}+\left[k_m^2r^2-(n\pm1)^2\right]E_{\pm,nm}&=0, \\
	r^2\frac{\mathrm{d}^2E_{z,nm}}{\mathrm{d}r^2}+r\frac{\mathrm{d}E_{z,nm}}{\mathrm{d}r}+\left[k_m^2r^2-n^2\right]E_{z,nm}&=0,
\end{align*}
Then utilizing the radiation condition (\ref{radiation1})-(\ref{radiation2}) results into the following series expansions:
\begin{align}
	E_r(r,\theta,z)
	&=\frac{1}{2}\sum_{m=0}^{\infty}\sum_{n=-\infty}^{\infty}\left[A_{nm}^+H_{n+1}^{(1)}(k_mr)+A_{nm}^-H_{n-1}^{(1)}(k_mr)\right]e^{\mathrm{i}n\theta}\sin(m\pi z/Z), \label{series expansion of Er} \\
	E_\theta(r,\theta,z)
	&=\frac{1}{2\mathrm{i}}\sum_{m=0}^{\infty}\sum_{n=-\infty}^{\infty}\left[A_{nm}^+H_{n+1}^{(1)}(k_mr)-A_{nm}^-H_{n-1}^{(1)}(k_mr)\right]e^{\mathrm{i}n\theta}\sin(m\pi z/Z), \label{series expansion of Etheta} \\
	E_z(r,\theta,z)
	&=\sum_{m=0}^{\infty}\sum_{n=-\infty}^{\infty}B_{nm}H_n^{(1)}(k_mr)e^{\mathrm{i}n\theta}\cos(m\pi z/Z). \label{series expansion of Ez}
\end{align}
Here $ H_n^{(1)} $ denotes the Hankel function of the first kind and of order $ n $.

Next, we derive the expression $ \bm\nu\times(\nabla\times\boldsymbol{E}) $ on $ \Gamma_R $. Noting that $\bm\nu=\bm e_r$ on $\Gamma_R$, it holds that
\begin{align*}
&(\bm\nu\times\nabla\times\boldsymbol{E})\cdot\boldsymbol{e}_\theta\big|_{\Gamma_R} \\
	=&\sum_{m=0}^{\infty}\sum_{n=-\infty}^{\infty}\left(\frac{\mathrm{i}k_m}{2}H_n^{(1)}(k_mR)A_{nm}^++\frac{\mathrm{i}k_m}{2}H_n^{(1)}(k_mR)A_{nm}^-\right)e^{\mathrm{i}n\theta}\sin(m\pi z/Z), \\
	&(\bm{\nu}\times\nabla\times\boldsymbol{E})\cdot\boldsymbol{e}_z\big|_{\Gamma_R} \\
	=&\sum_{m=0}^{\infty}\sum_{n=-\infty}^{\infty}\left(\frac{m\pi}{2Z}H_{n+1}^{(1)}(k_mR)A_{nm}^++\frac{m\pi}{2Z}H_{n-1}^{(1)}(k_mR)A_{nm}^-\right)e^{\mathrm{i}n\theta}\cos(m\pi z/Z) \\
	&-\sum_{m=0}^{\infty}\sum_{n=-\infty}^{\infty}k_mH_n^{(1)\prime}(k_mR)B_{nm}e^{\mathrm{i}n\theta}\cos(m\pi z/Z).
\end{align*}
Denote $S_{nm}=e^{\mathrm{i}n\theta}\sin(m\pi z/Z)$, $\boldsymbol{S}_{\alpha,nm}=S_{nm}\boldsymbol{e}_\alpha$, $C_{nm}=e^{\mathrm{i}n\theta}\cos(m\pi z/Z)$ and $\boldsymbol{C}_{\alpha,nm}=C_{nm}\boldsymbol{e}_\alpha$ with $ \alpha=r,\theta,z $. Define the $ L^2 $ inner product on $ \Gamma_R $ by
\begin{equation*}
    \left(\boldsymbol{u},\boldsymbol{v}\right)_{\Gamma_R}
    :=\frac{1}{\pi RZ}\int_{\Gamma_R}\boldsymbol{u}\cdot\overline{\boldsymbol{v}} \mathrm{d}s.
\end{equation*}
Hence,
\begin{align*}
	&\begin{bmatrix}
		\left((\boldsymbol{\nu}\times\nabla\times \boldsymbol{E})|_{\Gamma_R}, \boldsymbol{S}_{\theta,nm}\right)_{\Gamma_R}\\
		\left((\boldsymbol{\nu}\times\nabla\times\boldsymbol{E})|_{\Gamma_R},\boldsymbol{C}_{z.nm}\right)_{\Gamma_R}
	\end{bmatrix} \\
&=
\begin{bmatrix}
	\frac{\mathrm{i}k_m}{2}H_n^{(1)}(k_mR) & \frac{\mathrm{i}k_m}{2}H_n^{(1)}(k_mR) & 0 \\
	\frac{m\pi}{2Z}H_{n+1}^{(1)}(k_mR) & \frac{m\pi}{2Z}H_{n-1}^{(1)}(k_mR) & -k_mH_n^{(1)\prime}(k_mR)
\end{bmatrix}
\begin{bmatrix}
	A_{nm}^+ \\
	A_{nm}^- \\
	B_{nm}
\end{bmatrix}
:=P_{nm}
\begin{bmatrix}
	A_{nm}^+ \\
	A_{nm}^- \\
	B_{nm}
\end{bmatrix},
\end{align*}
and
\begin{align*}
    &\begin{bmatrix}
		\left((\boldsymbol{\nu}\times\boldsymbol{E})|_{\Gamma_R},\boldsymbol{S}_{z,nm}\right)_{\Gamma_R} \\
		\left((\boldsymbol{\nu}\times\boldsymbol{E})|_{\Gamma_R},\boldsymbol{C}_{\theta,nm}\right)_{\Gamma_R} \\
		\left(\nabla\cdot\boldsymbol{E}|_{\Gamma_R},S_{nm}\right)_{\Gamma_R}
	\end{bmatrix} \\
&=
\begin{bmatrix}
	\frac{1}{2\mathrm{i}}H_{n+1}^{(1)}(k_mR) & -\frac{1}{2\mathrm{i}}H_{n-1}^{(1)}(k_mR) & 0 \\
	0 & 0 & -H_n^{(1)}(k_mR) \\
	\frac{k_m}{2}H_n^{(1)}(k_mR) & -\frac{k_m}{2}H_n^{(1)}(k_mR) & -\frac{m\pi}{Z}H_n^{(1)}(k_mR)
\end{bmatrix}
\begin{bmatrix}
	A_{nm}^+ \\
	A_{nm}^- \\
	B_{nm}
\end{bmatrix}
:=Q_{nm}
\begin{bmatrix}
	A_{nm}^+ \\
	A_{nm}^- \\
	B_{nm}
\end{bmatrix}.
\end{align*}

\begin{lemma}\label{le 3.1}
	The matrix $ Q_{nm} $ is invertible for all $ m\in\mathbb{N} $ and $ n\in\mathbb{Z} $. Its inverse is given by
	\begin{equation*}
		Q_{nm}^{-1}=
		\begin{bmatrix}
			-\frac{\mathrm{i}}{H_n^{(1)\prime}(k_mR)} & -\frac{m\pi}{Z}\frac{H_{n-1}^{(1)}(k_mR)}{k_mH_n^{(1)}(k_mR)H_n^{(1)\prime}(k_mR)} & \frac{H_{n-1}^{(1)}(k_mR)}{k_mH_n^{(1)}(k_mR)H_n^{(1)\prime}(k_mR)} \\
			-\frac{\mathrm{i}}{H_n^{(1)\prime}(k_mR)} & -\frac{m\pi}{Z}\frac{H_{n+1}^{(1)}(k_mR)}{k_mH_n^{(1)}(k_mR)H_n^{(1)\prime}(k_mR)} & \frac{H_{n+1}^{(1)}(k_mR)}{k_mH_n^{(1)}(k_mR)H_n^{(1)\prime}(k_mR)} \\
			0 & -\frac{1}{H_n^{(1)}(k_mR)} & 0
		\end{bmatrix}.
	\end{equation*}
\end{lemma}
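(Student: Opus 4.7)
The plan is to establish invertibility by computing $\det(Q_{nm})$ in closed form and then to verify the stated expression for $Q_{nm}^{-1}$ by direct matrix multiplication. Both tasks reduce essentially to the single Bessel recurrence $H_{n-1}^{(1)}(z)-H_{n+1}^{(1)}(z)=2H_n^{(1)\prime}(z)$, combined with a non-vanishing statement about $H_n^{(1)}(k_mR)$ and $H_n^{(1)\prime}(k_mR)$.

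For the determinant, I expand along the second row of $Q_{nm}$, whose only nonzero entry is $-H_n^{(1)}(k_mR)$ in position $(2,3)$. The remaining $2\times 2$ minor has $H_n^{(1)}(k_mR)$ as a common factor in its bottom row and collapses to $(k_m/(4\mathrm{i}))H_n^{(1)}(k_mR)[H_{n-1}^{(1)}(k_mR)-H_{n+1}^{(1)}(k_mR)]$; applying the recurrence yields
\[
\det(Q_{nm})=\frac{k_m}{2\mathrm{i}}\,[H_n^{(1)}(k_mR)]^{2}\,H_n^{(1)\prime}(k_mR).
\]
Under the standing assumption $k\neq m\pi/Z$, the argument $k_mR$ is either a strictly positive real (propagating modes, $k>m\pi/Z$) or lies on the positive imaginary axis (evanescent modes, $k<m\pi/Z$). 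In the first case the interlacing of the real zeros of $J_n,Y_n$ (respectively $J_n',Y_n'$) prevents simultaneous vanishing of the real and imaginary parts of $H_n^{(1)}=J_n+\mathrm{i}Y_n$ or of its derivative; in the second case $k_mR$ lies in the open upper half-plane, where $H_n^{(1)}$ and $H_n^{(1)\prime}$ are classically zero-free. Hence $\det(Q_{nm})\neq 0$.

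With invertibility in hand, I verify the explicit inverse by forming $Q_{nm}Q_{nm}^{-1}$ entry by entry. Diagonal entries $(1,1)$ and $(3,3)$ collapse to $[H_{n-1}^{(1)}-H_{n+1}^{(1)}]/(2H_n^{(1)\prime})=1$ via the same recurrence, and $(2,2)$ is immediate from row two of $Q_{nm}$ and the third column of $Q_{nm}^{-1}$. The off-diagonal entries in columns two and three of $Q_{nm}^{-1}$ produce symmetric $H_{n-1}^{(1)}H_{n+1}^{(1)}$ products that cancel through the $\pm$-structure of the first two rows of $Q_{nm}^{-1}$; the $(3,2)$ entry is the only one requiring a small balancing computation, where the $m\pi/Z$ contribution from column two cancels the $-(m\pi/Z)H_n^{(1)}$ term in column three of $Q_{nm}$ after invoking the same recurrence once more.

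The only conceptual ingredient is the non-vanishing of $H_n^{(1)}(k_mR)$ and $H_n^{(1)\prime}(k_mR)$ in both the propagating and evanescent regimes; I expect this to be the main (but standard) step, with everything else amounting to mechanical Bessel bookkeeping driven by the single recurrence above.
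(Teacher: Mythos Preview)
Your proposal is correct and follows essentially the same route as the paper: compute $\det(Q_{nm})$ via the recurrence $H_{n-1}^{(1)}-H_{n+1}^{(1)}=2H_n^{(1)\prime}$, then argue non-vanishing of $H_n^{(1)}(k_mR)$ and $H_n^{(1)\prime}(k_mR)$ separately in the propagating and evanescent regimes. The only cosmetic differences are that the paper handles the evanescent case by writing $H_n^{(1)}(\mathrm{i}t)$ explicitly in terms of the positive modified Bessel functions $K_n$ (rather than invoking the zero-free property in the upper half-plane), and it omits the mechanical verification of the inverse formula that you carry out; your inclusion of that check is a harmless addition.
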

\begin{proof}
	It's sufficient to prove that $ \det(Q_{nm})\neq0 $. A straightforward calculations gives
	\begin{align*}
		\det(Q_{nm})
		&=\frac{k_m}{4\mathrm{i}}(H_n^{(1)}(k_mR))^2(H_{n-1}^{(1)}(k_m)-H_{n+1}^{(1)}(k_mR)) \\
		&=\frac{k_m}{2\mathrm{i}}(H_n^{(1)}(k_mR))^2H_n^{(1)\prime}(k_mR).
	\end{align*}
For $ m<kZ/\pi $, we know from \cite{MR2988887} that
\begin{equation*}
	H_n^{(1)\prime}(k_mR)\neq0, \quad H_n^{(1)}(k_mR)\neq0.
\end{equation*}
For $ m>kZ/\pi $, letting $ K_n $ denote the modified Bessel function, it holds that
\begin{equation*}
	H_n^{(1)}(\mathrm{i}t)=\frac{2}{\pi}e^{-\mathrm{i}(n+1)\pi/2}K_n(t) \quad t>0.
\end{equation*}
Then it follows from \cite[Appendix C]{MR3376143} that
\begin{align*}
	\det(Q_{nm})
	=\mathrm{i}\frac{2|k_m|}{\pi^3}e^{-\mathrm{i}3(n+1)\pi/2}(K_n(|k_m|R))^2(K_{n-1}(|k_m|R)+K_{n+1}(|k_m|R))
	\neq0.
\end{align*}
Hence, $ \det(Q_{nm})\neq0 $ holds for all $ m\in\mathbb{N} $ and $ n\in\mathbb{Z} $.
\end{proof}

Therefore, we arrive at
\begin{align*}
	\begin{bmatrix}
    \left((\boldsymbol{\nu}\times\nabla\times\boldsymbol{E})|_{\Gamma_R},\boldsymbol{S}_{\theta,nm}\right)_{\Gamma_R} \\
    \left((\boldsymbol{\nu}\times\nabla\times\boldsymbol{E})|_{\Gamma_R}\boldsymbol{C}_{z,nm}\right)_{\Gamma_R}
	\end{bmatrix}
=&~P_{nm}Q_{nm}^{-1}
\begin{bmatrix}
	\left((\boldsymbol{\nu}\times\boldsymbol{E})|_{\Gamma_R},\boldsymbol{S}_{z,nm}\right)_{\Gamma_R} \\
	\left((\boldsymbol{\nu}\times\boldsymbol{E})|_{\Gamma_R},\boldsymbol{C}_{\theta,nm}\right)_{\Gamma_R} \\
	0
\end{bmatrix} \\
:=&~W_{nm}
\begin{bmatrix}
\left((\boldsymbol{\nu}\times\boldsymbol{E})|_{\Gamma_R},\boldsymbol{S}_{z,nm}\right)_{\Gamma_R} \\
	\left((\boldsymbol{\nu}\times\boldsymbol{E})|_{\Gamma_R},\boldsymbol{C}_{\theta,nm}\right)_{\Gamma_R}
\end{bmatrix}
\end{align*}
where
\begin{align*}
&W_{nm}^{11}
=\frac{k_mH_n^{(1)}(k_mR)}{H_n^{(1)\prime}(k_mR)}, \quad
W_{nm}^{12}=W_{nm}^{21}
=-\frac{\mathrm{i}nm\pi}{RZ}\frac{H_n^{(1)}(k_mR)}{k_mH_n^{(1)\prime}(k_mR)}, \\
&W_{nm}^{22}
=-\frac{n^2m^2\pi^2}{k_m^2R^2Z^2}\frac{H_n^{(1)}(k_mR)}{k_mH_n^{(1)\prime}(k_mR)}+k^2\frac{H_n^{(1)\prime}(k_mR)}{k_mH_n^{(1)}(k_mR)}.
\end{align*}
Given a field $\bm\nu\times\boldsymbol{U}\in \boldsymbol{H}^{-1/2}(\mathrm{Div},\Gamma_R)$ on $ \Gamma_R $, the Calder\'{o}n operator $ \mathscr{T}$ hence can be explicitly expressed as
\begin{equation}\label{definition of Calderon operator T}
	\begin{aligned}
		\mathscr{T}[\boldsymbol{\nu}\times\boldsymbol{U}]
        =&\sum_{n=-\infty}^{\infty}\sum_{m=0}^{\infty}[\boldsymbol{S}_{\theta,nm},\boldsymbol{C}_{z,nm}]W_{nm}
        \begin{bmatrix}
            \left((\boldsymbol{\nu}\times\boldsymbol{U})|_{\Gamma_R},\boldsymbol{S}_{z,nm}\right)_{\Gamma_R} \\
            \left((\boldsymbol{\nu}\times\boldsymbol{U})|_{\Gamma_R},\boldsymbol{C}_{\theta,nm}\right)_{\Gamma_R}
        \end{bmatrix}.
	\end{aligned}
\end{equation}

Utilizing the TBC (\ref{TBC}), the scattering problem of electric field $ \boldsymbol{E} $ can be reformulated on the truncated domain as
\begin{equation}\label{truncated scattering problem}
	\left\{
	\begin{aligned}
		& \nabla\times\nabla\times\boldsymbol{E}-k^2\boldsymbol{E}=\mathbf{0} \quad && \text{in}~\Omega_R, \\
		&\boldsymbol{\nu}\times\nabla\times\boldsymbol{E}-\mathrm{i}\eta\boldsymbol{E}_T=\boldsymbol{f} \quad && \text{on}~\partial D, \\
		&\boldsymbol{\nu}\times\nabla\times\boldsymbol{E}=\mathscr{T}[\boldsymbol{\nu}\times\boldsymbol{E}] \quad && \text{on}~\Gamma_R, \\
		& \boldsymbol{\nu}\times\boldsymbol{E}=\mathbf{0} \quad && \text{on}~\Sigma_R.
	\end{aligned}
	\right.
\end{equation}
Multiplying both sides of (\ref{truncated scattering problem}) with $ \boldsymbol{V}\in\boldsymbol{X} $, integrating the result on $ \Omega_R $, and taking integration by parts, the corresponding variational problem of (\ref{truncated scattering problem}) reads: find $ \boldsymbol{E}\in\boldsymbol{X} $, such that
\begin{equation}\label{variational problem}
	a(\boldsymbol{E},\boldsymbol{V})
	=b(\bm V),
    \quad \forall\boldsymbol{V}\in\boldsymbol{X},
\end{equation}
where the sesquilinear form $ a(\cdot,\cdot):\boldsymbol{X}\times\boldsymbol{X}\to\mathbb{C} $ is defined as
\begin{equation*}
	a(\boldsymbol{U},\boldsymbol{V})
	=\int_{\Omega_R}\left(\nabla\times\boldsymbol{U}\cdot\nabla\times\overline{\boldsymbol{V}}-k^2\boldsymbol{U}\cdot\overline{\boldsymbol{V}}\right)\mathrm{d}\boldsymbol{x}
	-\mathrm{i}\langle\eta\boldsymbol{U}_T\cdot\overline{\boldsymbol{V}}_T\rangle_{\partial D}
	+\langle\mathscr{T}[\boldsymbol{\nu}\times\boldsymbol{U}]\cdot\overline{\boldsymbol{V}}_T\rangle_{\Gamma_R},
\end{equation*}
and the the linear functional $b(\cdot):\boldsymbol{X}\to\mathbb{C}$ is given by
\begin{equation*}
b(\bm V)=
    \int_{\partial D}\boldsymbol{f}\cdot\overline{\boldsymbol{V}}_T\mathrm{d}s.
\end{equation*}

\subsection{Well-posedness analysis}
In this subsection, the well-posedness of variational problem (\ref{variational problem}) and the inf-sup condition for the sesquilinear form $ a(\cdot,\cdot) $ will be established. First we prove the following properties of the Calder\'{o}n operator $ \mathscr{T} $.

\begin{lemma}\label{continuity of T}
	There exists a constant $ C>0 $ depending only on $ k,R,Z $ such that
	\begin{equation*}
		\|\mathscr{T}[\boldsymbol{\nu}\times\boldsymbol{U}]\|_{\boldsymbol{H}^{-1/2}(\mathrm{Div},\Gamma_R)}
		\leq C\|\boldsymbol{\nu}\times\boldsymbol{U}\|_{\boldsymbol{H}^{-1/2}(\mathrm{Div},\Gamma_R)} \quad \forall \boldsymbol{\nu}\times\boldsymbol{U}\in\boldsymbol{H}^{-1/2}(\mathrm{Div},\Gamma_R).
	\end{equation*}
\end{lemma}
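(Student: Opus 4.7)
My approach is to exploit the explicit Fourier series representation (\ref{definition of Calderon operator T}) of $\mathscr{T}$ and reduce the boundedness to a uniform-in-$(n,m)$ estimate on the entries of the $2\times 2$ multiplier matrices $W_{nm}$. Concretely, I would expand both $\bm\nu\times\bm U$ and $\mathscr{T}[\bm\nu\times\bm U]$ in the tangential Fourier bases $\{\bm S_{\theta,nm},\bm S_{z,nm},\bm C_{\theta,nm},\bm C_{z,nm}\}$ introduced in Section~\ref{sec:2.2}, and read off the coefficients of the output from $W_{nm}$ applied to those of the input. Comparing with the series characterization of the $\bm H^{-1/2}(\mathrm{Div},\Gamma_R)$ norm, the claim reduces to proving
\begin{equation*}
|W_{nm}^{ij}|\leq C\,(1+n^2+m^2)^{1/2}\quad\text{for all }n\in\mathbb Z,\ m\in\mathbb N,
\end{equation*}
together with a matching bound for the surface-divergence coefficients of the output.

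\textbf{Estimate of $W_{nm}$.} To verify these mode-by-mode bounds, I would split the summation at $m_0=\lfloor kZ/\pi\rfloor$. For $m\le m_0$ only finitely many modes arise, $k_m>0$ is real, and standard large-order Hankel asymptotics $H_n^{(1)}(x)/H_n^{(1)\prime}(x)=O(x/|n|)$ as $|n|\to\infty$ directly deliver the required linear-in-$|n|$ growth of $W_{nm}^{11}, W_{nm}^{12}$ and at-most-linear growth of $W_{nm}^{22}$. For $m>m_0$ one has $k_m=\mathrm{i}|k_m|$ with $|k_m|\sim m\pi/Z$; the identity $H_n^{(1)}(\mathrm{i}t)=(2/\pi)e^{-\mathrm{i}(n+1)\pi/2}K_n(t)$ (and its derivative version, already used in the proof of Lemma~\ref{le 3.1}) rewrites every ratio entering $W_{nm}$ as a real ratio of modified Bessel functions $K_n$ and $K_n'$. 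The uniform estimates of \cite[Appendix~C]{MR3376143}, in particular $-K_n'(t)/K_n(t)\asymp\sqrt{1+n^2/t^2}$, then control all three entries by $C\sqrt{1+n^2+m^2}$ in the three regimes $n\ll |k_m|R$, $n\gg|k_m|R$ and the transitional regime $n\sim|k_m|R$; a short case analysis in $W_{nm}^{22}$ shows that its worst term, $n^2m^2/(k_m^2R^2Z^2)\cdot H_n^{(1)}(k_mR)/(k_m H_n^{(1)\prime}(k_mR))$, still obeys this bound after cancellation with the $\sqrt{...}$ denominator.

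\textbf{Divergence contribution.} For the surface-divergence part of the output norm, I would use the identity
\begin{equation*}
\nabla_{\Gamma_R}\cdot(\bm\nu\times\nabla\times\bm E)=-\bm\nu\cdot(\nabla\times\nabla\times\bm E)=-k^2\,\bm\nu\cdot\bm E\quad\text{on }\Gamma_R,
\end{equation*}
so that the coefficients of $\nabla_{\Gamma_R}\cdot\mathscr{T}[\bm\nu\times\bm U]$ reduce to those of $-k^2\bm\nu\cdot\bm E=-k^2E_r|_{\Gamma_R}$, which can be read off directly from (\ref{series expansion of Er})--(\ref{series expansion of Ez}) in terms of $A_{nm}^\pm$ and then of the input data via $Q_{nm}^{-1}$. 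The same Hankel/Bessel ratio bounds then produce the required control against the $(1+n^2+m^2)^{-1/2}$ weight.

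\textbf{Main obstacle.} The principal technical hurdle is obtaining the Hankel/Bessel ratio bounds uniformly in both $n$ and $m$, particularly in the transitional regime $n\sim|k_m|R$ where neither the large-argument nor the large-order asymptotics is individually sharp; the uniform modified-Bessel estimates of \cite[Appendix~C]{MR3376143} will be the decisive ingredient. Once those are in place, the remainder is careful bookkeeping of the $(1+n^2+m^2)^{-1/2}$ weights in the $\bm H^{-1/2}(\mathrm{Div},\Gamma_R)$ norm against the at-most-linear growth of $W_{nm}$.
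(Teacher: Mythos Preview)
Your reduction has a genuine gap. The claim that boundedness of $\mathscr{T}$ on $\bm H^{-1/2}(\mathrm{Div},\Gamma_R)$ ``reduces to proving $|W_{nm}^{ij}|\le C(1+n^2+m^2)^{1/2}$'' is not correct. A generic multiplier matrix whose entries obey that growth only gives $\|\mathscr{T}[\bm\nu\times\bm U]\|_{\bm H_t^{-1/2}}\le C\|\bm\nu\times\bm U\|_{\bm H_t^{1/2}}$; it does \emph{not} control the $\bm H_t^{-1/2}$ part of the output by $\|\bm\nu\times\bm U\|_{\bm H^{-1/2}(\mathrm{Div})}$. Indeed, on an input for which the surface divergence $\frac{\mathrm{i}n}{R}(\bm\nu\times\bm U)_{\theta,c,nm}+\frac{m\pi}{Z}(\bm\nu\times\bm U)_{z,s,nm}$ vanishes mode by mode, the input norm carries only the weight $(1+n^2+m^2)^{-1/2}$, whereas your crude bound on the output produces weight $(1+n^2+m^2)^{+1/2}$. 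Your separate treatment of $\nabla_{\Gamma_R}\cdot\mathscr{T}[\bm\nu\times\bm U]$ via the identity $\nabla_{\Gamma_R}\cdot\mathscr{T}[\bm\nu\times\bm U]=-k^2\,\bm\nu\cdot\bm E$ is fine, but it only handles the divergence term of the \emph{output} norm; it does not repair the loss in the $\bm H_t^{-1/2}$ term.

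What is missing is precisely the structural cancellation that the paper exploits. Writing $\rho_{nm}=H_n^{(1)}(k_mR)/(k_mH_n^{(1)\prime}(k_mR))$, the paper rewrites the duality pairing as
\[
\langle\mathscr{T}[\bm\nu\times\bm U],\bm V_T\rangle_{\Gamma_R}
=\pi RZ\sum_{n,m}\rho_{nm}\Big[k^2U_{\theta,s}\overline V_{\theta,s}+k^2C_{nm}U_{z,c}\overline V_{z,c}
+\big(\nabla_{\Gamma_R}\cdot(\bm\nu\times\bm U)\big)_{nm}\big(\nabla_{\Gamma_R}\times\bm V_T\big)_{nm}\Big],
\]
with $C_{nm}=n^2/(k_m^2R^2)-(H_n^{(1)\prime}/H_n^{(1)})^2$. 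The two estimates it proves are a \emph{decay} bound $|\rho_{nm}|\le C(1+n^2+m^2)^{-1/2}$ and a \emph{uniform} bound $|C_{nm}|\le C$ (the latter coming from a nontrivial cancellation between $n^2/(k_m^2R^2)$ and $(H_n^{(1)\prime}/H_n^{(1)})^2$ that your entry-wise estimate of $W_{nm}^{22}$ does not capture). With these, Cauchy--Schwarz against the $(1+n^2+m^2)^{-1/2}$ weight immediately pairs the input $\bm H^{-1/2}(\mathrm{Div})$ norm with the test $\bm H^{-1/2}(\mathrm{Curl})$ norm. Equivalently, at the coefficient level one has $T_{\theta,s,nm}=\rho_{nm}\big[k^2U_{\theta,s}-\tfrac{m\pi}{Z}(\nabla_{\Gamma_R}\cdot(\bm\nu\times\bm U))_{nm}\big]$ and $T_{z,c,nm}=\rho_{nm}\big[k^2C_{nm}U_{z,c}-\tfrac{\mathrm{i}n}{R}(\nabla_{\Gamma_R}\cdot(\bm\nu\times\bm U))_{nm}\big]$: the ``large'' part of each output coefficient factors through the surface divergence of the input. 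That is the decomposition you need, and it does not follow from the entry-wise growth bound you propose.
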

\begin{proof}
	A simple calculation yields
	\begin{align*}
		\langle\mathscr{T}[\boldsymbol{\nu}\times\boldsymbol{U}],\boldsymbol{V}_T\rangle_{\Gamma_R}
	=&~\pi RZ\sum_{m=0}^{\infty}\sum_{n=-\infty}^{\infty}\frac{H_n^{(1)}(k_mR)}{k_mH_n^{(1)\prime}(k_mR)}\Bigg[k^2U_{\theta,s,nm}\overline{V}_{\theta,s,nm} \\
    &+k^2\left(\frac{n^2}{k_m^2R^2}-\left(\frac{H_n^{(1)\prime}(k_mR)}{H_n^{(1)}(k_mR)}\right)^2\right)U_{z,c,nm}\overline{V}_{z,c,nm} \\
	&+\left(\frac{\mathrm{i}n}{R}U_{z,c,nm}-\frac{m\pi}{Z}U_{\theta,s,nm}\right)\left(\frac{m\pi}{Z}\overline{V}_{\theta,s,nm}+\frac{\mathrm{i}n}{R}\overline{V}_{z,c,nm}\right)\Bigg].
	\end{align*}
    It is sufficient to show that there exists a positive constant $ C $ such that
    \begin{equation}\label{inqs}
    \left|\frac{H_n^{(1)}(k_mR)}{k_mH_n^{(1)\prime}(k_mR)}\right|\leq C(1+n^2+m^2)^{-1/2}, \quad \left|\frac{n^2}{k_m^2R^2}-\left(\frac{H_n^{(1)\prime}(k_mR)}{H_n^{(1)}(k_mR)}\right)^2\right|\leq C,
    \end{equation}
    uniformly for all $ m\in\mathbb{N} $ and $ n\in\mathbb{Z} $.

For $ m<kZ/\pi$ and $ |n|>N $ with $N$ being sufficiently large, we know from the asymptotic behavior of Hankel functions (see, e.g., \cite[Chapter 10]{MR2723248}) that
\begin{align*}
	\left|\frac{H_n^{(1)}(k_mR)}{k_mH_n^{(1)\prime}(k_mR)}\right|
	=\left|-\frac{R}{|n|}+O(n^{-3})\right|
	\leq C|n|^{-1}.
\end{align*}
and
\begin{align*}
	\frac{n^2}{k_m^2R^2}-\frac{H_n^{(1)\prime}(k_mR)^2}{H_n^{(1)}(k_mR)^2}
	=\frac{n^2}{k_m^2R^2}-\left(-\frac{|n|}{k_mR}+\frac{k_mR}{2|n|}+O(n^{-2})\right)^2
	=O(1).
\end{align*}
which shows that (\ref{inqs}) holds for $ m<kZ/\pi$.

For $ m>kZ/\pi $, from the recurrence relations (cf. \cite[Appendix]{MR2402567})
\begin{equation*}
	\frac{H_n^{(1)\prime}(t)}{H_n^{(1)}(t)}=\frac{H_{|n|-1}^{(1)}(t)}{H_{|n|}^{(1)}(t)}-\frac{|n|}{t}, \quad \frac{H_{n-1}^{(1)}(\mathrm{i}t)}{H_n^{(1)}(\mathrm{i}t)}=\mathrm{i}\frac{K_{n-1}(t)}{K_n(t)},
\end{equation*}
it follows from \cite[(10.25.3), (10.41.2)]{MR2723248} for $ |n|\in\mathbb{N}_+ $ that
\begin{align*}
	&\left|\frac{n^2}{k_m^2R^2}-\left(\frac{H_n^{(1)\prime}(k_mR)}{H_n^{(1)}(k_mR)}\right)^2\right| \\
	=&~\left|\frac{n^2}{-|k_m|^2R^2}-\left(-\frac{K_{|n|-1}(|k_m|R)^2}{K_{|n|}(|k_m|R)^2}-2\frac{K_{|n|-1}(|k_m|R)}{K_{|n|}(|k_m|R)}\frac{|n|}{|k_m|R}+\frac{n^2}{k_m^2R^2}\right)\right| \\
	=&~\frac{K_{|n|-1}(|k_m|R)}{K_{|n|}(|k_m|R)}\frac{K_{|n|-1}(|k_m|R)+2\frac{|n|}{|k_m|R}K_{|n|}(|k_m|R)}{K_{|n|}(|k_m|R)} \\
	=&~\frac{K_{|n|-1}(|k_m|R)}{K_{|n|}(|k_m|R)}\frac{K_{|n|+1}(|k_m|R)}{K_{|n|}(|k_m|R)}
	\leq C.
\end{align*}
Using \cite[(A10)]{MR2402567}, it gives for $ n=0 $ that
\begin{align*}
	\left|\frac{H_0^{(1)\prime}(k_mR)^2}{H_0^{(1)}(k_mR)^2}\right|
	\leq\left|\frac{H_{-1}^{(1)}(k_mR)}{H_0^{(1)}(k_mR)}\right|^2
	\leq C.
\end{align*}
Moreover, for $ |n|\in\mathbb{N}$,
\begin{align*}
	\left|\frac{H_n^{(1)}(k_mR)}{k_mH_n^{(1)\prime}(k_mR)}\right|
	=\frac{R}{|k_m|RK_{|n|-1}(|k_m|R)/K_{|n|}(|k_m|R)+|n|}
	\leq C(|n|+m)^{-1},
\end{align*}
which completes the proof of (\ref{inqs}) holds for $ m>kZ/\pi$.

Therefore, applying the Cauchy-Schwarz inequality yields
\begin{align*}
	&|\langle\mathscr{T}[\boldsymbol{\nu}\times\boldsymbol{U}],\boldsymbol{V}_T\rangle_{\Gamma_R}| \\
	\leq&~C\sum_{m=0}^{\infty}\sum_{n=-\infty}^{\infty}(1+n^2+m^2)^{-1/2}\bigg(|U_{\theta,s,nm}||V_{\theta,s,nm}|+|U_{z,c,nm}||V_{z,c,nm}| \\
	&+\left|\frac{\mathrm{i}n}{R}U_{z,c,nm}-\frac{m\pi}{Z}U_{\theta,s,nm}\right|\left|\frac{m\pi}{Z}V_{\theta,s,nm}-\frac{\mathrm{i}n}{R}V_{z,c,nm}\right|\bigg) \\
	\leq&~C\|\boldsymbol{\nu}\times\boldsymbol{U}\|_{\boldsymbol{H}^{-1/2}(\mathrm{Div},\Gamma_R)}\|\boldsymbol{V}_T\|_{\boldsymbol{H}^{-1/2}(\mathrm{Curl},\Gamma_R)}.
\end{align*}
Then the proof is complete by noting that
\begin{align*}
	\|\mathscr{T}[\boldsymbol{\nu}\times\boldsymbol{U}]\|_{\boldsymbol{H}^{-1/2}(\mathrm{Div},\Gamma_R)}
	&=\sup_{\mathbf{0}\neq\boldsymbol{V}_T\in\boldsymbol{H}^{-1/2}(\mathrm{Curl},\Gamma_R)}\frac{|\langle\mathscr{T}[\boldsymbol{\nu}\times\boldsymbol{U}],\boldsymbol{V}_T\rangle_{\Gamma_R}|}{\|\boldsymbol{V}_T\|_{\boldsymbol{H}^{-1/2}(\mathrm{Curl},\Gamma_R)}} \\
	&\leq C\|\boldsymbol{\nu}\times\boldsymbol{U}\|_{\boldsymbol{H}^{-1/2}(\mathrm{Div},\Gamma_R)}.
\end{align*}
\end{proof}

\begin{lemma}\label{imaginary part of boundary integral}
	For any $\bm\nu\times\boldsymbol{U}\in \boldsymbol{H}^{-1/2}(\mathrm{Div},\Gamma_R)$, it holds that
	\begin{equation*}
		\mathrm{Im}\langle\mathscr{T}[\boldsymbol{\nu}\times\boldsymbol{U}],\boldsymbol{U}_T\rangle_{\Gamma_R}\leq0.
	\end{equation*}
\end{lemma}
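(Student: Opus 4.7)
The plan is to set $\bm{V}=\bm{U}$ in the explicit Fourier series expression for $\langle\mathscr{T}[\bm{\nu}\times\bm{U}],\bm{V}_T\rangle_{\Gamma_R}$ established at the start of the proof of Lemma~\ref{continuity of T}, and show that the imaginary part of each $(n,m)$-summand is nonpositive.

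First I would simplify the cross-product in the bracket. The second factor $\tfrac{m\pi}{Z}\overline{U}_{\theta,s,nm}+\tfrac{\mathrm{i}n}{R}\overline{U}_{z,c,nm}$ is precisely the negative of the complex conjugate of the first factor $\tfrac{\mathrm{i}n}{R}U_{z,c,nm}-\tfrac{m\pi}{Z}U_{\theta,s,nm}$, so with $\bm{V}=\bm{U}$ their product equals $-\bigl|\tfrac{\mathrm{i}n}{R}U_{z,c,nm}-\tfrac{m\pi}{Z}U_{\theta,s,nm}\bigr|^2$ and is real. Substituting $k^2=k_m^2+m^2\pi^2/Z^2$ and expanding the modulus, the bracket reorganises as $\mathcal{R}_{n,m}(\bm{U})-k^2\bigl(H_n^{(1)\prime}(k_mR)/H_n^{(1)}(k_mR)\bigr)^2|U_{z,c,nm}|^2$, where $\mathcal{R}_{n,m}(\bm{U}):=k_m^2|U_{\theta,s,nm}|^2+\tfrac{n^2m^2\pi^2}{k_m^2R^2Z^2}|U_{z,c,nm}|^2-\tfrac{2nm\pi}{RZ}\mathrm{Im}(U_{z,c,nm}\overline{U}_{\theta,s,nm})$ is real for every $(n,m)$. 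Moreover, whenever $k_m^2>0$ the AM--GM inequality gives $k_m^2|U_{\theta,s,nm}|^2+\tfrac{n^2m^2\pi^2}{k_m^2R^2Z^2}|U_{z,c,nm}|^2\ge\tfrac{2|n|m\pi}{RZ}|U_{\theta,s,nm}||U_{z,c,nm}|$, which dominates the last term of $\mathcal{R}_{n,m}$ in absolute value, so $\mathcal{R}_{n,m}(\bm{U})\ge0$ in the propagating regime.

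I then split the double sum into propagating modes ($m<kZ/\pi$, $k_m>0$) and evanescent modes ($m>kZ/\pi$, $k_m=\mathrm{i}|k_m|$). For evanescent modes, the identity $H_n^{(1)}(\mathrm{i}t)=\tfrac{2}{\pi}e^{-\mathrm{i}(n+1)\pi/2}K_n(t)$ and its derivative show that both $H_n^{(1)}(k_mR)/\bigl(k_mH_n^{(1)\prime}(k_mR)\bigr)$ and $\bigl(H_n^{(1)\prime}(k_mR)/H_n^{(1)}(k_mR)\bigr)^2$ are real; combined with the reality of $\mathcal{R}_{n,m}(\bm{U})$, the whole summand is real and contributes zero to the imaginary part. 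For propagating modes, the Wronskian $J_n(t)Y_n'(t)-J_n'(t)Y_n(t)=\tfrac{2}{\pi t}$ yields $\mathrm{Im}\bigl[H_n^{(1)}(k_mR)/(k_mH_n^{(1)\prime}(k_mR))\bigr]=-\tfrac{2}{\pi k_m^2R|H_n^{(1)\prime}(k_mR)|^2}$ and $\mathrm{Im}\bigl[k^2H_n^{(1)\prime}(k_mR)/(k_mH_n^{(1)}(k_mR))\bigr]=\tfrac{2k^2}{\pi k_m^2R|H_n^{(1)}(k_mR)|^2}$, so the imaginary part of the propagating summand collapses to $-\tfrac{2}{\pi k_m^2R}\bigl(\mathcal{R}_{n,m}(\bm{U})/|H_n^{(1)\prime}(k_mR)|^2+k^2|U_{z,c,nm}|^2/|H_n^{(1)}(k_mR)|^2\bigr)\le0$, using $\mathcal{R}_{n,m}(\bm{U})\ge0$.

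Summing the nonpositive propagating contributions and the vanishing evanescent contributions gives the lemma. The main technical obstacle is not the Bessel-function calculus---which is routine given the Wronskian of $J_n,Y_n$ and the $K_n$-representation of $H_n^{(1)}$ on the imaginary axis---but the preparatory algebraic step: recognising that the identity $k^2=k_m^2+m^2\pi^2/Z^2$ is exactly what is needed to split the bracket into a real, sign-definite quadratic form $\mathcal{R}_{n,m}(\bm{U})$ plus a single residual complex term whose imaginary part carries a definite sign.
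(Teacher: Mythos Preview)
Your argument is correct. The mode-by-mode strategy, the treatment of evanescent modes via the $K_n$-representation, and the sign analysis for propagating modes all go through as you describe.

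The paper's proof reaches the same conclusion by a slightly different organisation. Rather than starting from the scalar expression in Lemma~\ref{continuity of T} and reorganising the bracket via $k^2=k_m^2+m^2\pi^2/Z^2$ into the nonnegative form $\mathcal{R}_{n,m}(\bm{U})$, the paper writes the pairing directly as a $2\times2$ Hermitian form $[\overline{U}_{\theta,s,nm},\overline{U}_{z,c,nm}]\,\widetilde{W}_{nm}\,[U_{\theta,s,nm},U_{z,c,nm}]^\top$ built from the coefficients $W_{nm}^{ij}$ of the Calder\'on operator. For propagating modes it checks that $\mathrm{Im}[\widetilde{W}_{nm}]$ is negative definite by the Sylvester-type test $\mathrm{Im}[\widetilde{W}_{nm}^{11}]<0$ and $\det(\mathrm{Im}[\widetilde{W}_{nm}])>0$, invoking only the qualitative fact $\mathrm{Im}[H_n^{(1)\prime}/H_n^{(1)}]>0$ rather than the explicit Wronskian value; for evanescent modes it observes $\widetilde{W}_{nm}^*=\widetilde{W}_{nm}$. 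Your AM--GM step is exactly the scalar counterpart of the paper's determinant condition, and your explicit Wronskian identities give quantitative versions of the sign conditions the paper quotes. The matrix route is a bit shorter; your route has the merit of producing the explicit value of the imaginary part, which could be useful if one later needs a quantitative lower bound.
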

\begin{proof}
It follows from the definition of Calder\'{o}n operator $ \mathscr{T} $ that
	\begin{align*}
		\langle\mathscr{T}[\boldsymbol{\nu}\times\boldsymbol{U}],\boldsymbol{\nu}\times(\boldsymbol{U}\times\boldsymbol{\nu})\rangle_{\Gamma_R}
		=\pi RZ\sum_{n=-\infty}^{\infty}\sum_{m=0}^{\infty}[\overline{U}_{\theta,s,nm},\overline{U}_{z,c,nm}]\widetilde{W}_{nm}
	\begin{bmatrix}
		U_{\theta,s,nm} \\
		U_{z,c,nm}
	\end{bmatrix}.
	\end{align*}
where $\widetilde{W}_{nm}^{11}=W_{nm}^{11}$, $\widetilde{W}_{nm}^{12}=-W_{nm}^{12}$, $\widetilde{W}_{nm}^{21}=W_{nm}^{21}$ and $\widetilde{W}_{nm}^{22}=-W_{nm}^{22}$. For $ m<kZ/\pi $, it follows from $ \mathrm{Im}[H_n^{(1)\prime}(k_mR)/H_n^{(1)}(k_mR)]>0 $~\cite{MR2988887} that
\begin{equation*}
	\mathrm{Im}[H_n^{(1)}(k_mR)/H_n^{(1)\prime}(k_mR)]<0.
\end{equation*}
This gives
\begin{align*}
	\mathrm{Im}[\widetilde{W}_{nm}^{11}]
	=&~\mathrm{Im}\left[\frac{k_mH_n^{(1)}(k_mR)}{H_n^{(1)\prime}(k_mR)}\right]<0, \\
	\det(\mathrm{Im}[\widetilde{W}_{nm}])
	=&-k^2\mathrm{Im}\left[\frac{H_n^{(1)}(k_mR)}{H_n^{(1)\prime}(k_mR)}\right]\mathrm{Im}\left[\frac{H_n^{(1)\prime}(k_mR)}{H_n^{(1)}(k_mR)}\right]>0.
\end{align*}
Hence, $ \mathrm{Im}[\widetilde{W}_{nm}] $ is negative-definite for $ m<kZ/\pi $. For $ m>kZ/\pi $, since $ K_n(t) $ are real for all $ n $ and $ t>0 $, we have
\begin{align*}
	\widetilde{W}_{nm}^*=\widetilde{W}_{nm}.
\end{align*}
The proof is complete.
\end{proof}

The uniqueness of the variational problem (\ref{variational problem}) is given in the following lemma.

\begin{lemma}\label{uniqueness result}
The variational problem (\ref{variational problem}) has at most one solution.
\end{lemma}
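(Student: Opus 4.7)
The plan is to show that the only solution to the homogeneous variational problem (with $\bm f=\mathbf 0$) is $\bm E=\mathbf 0$, by reducing matters to the uniqueness result already established for the full (untruncated) problem in Section~\ref{sec:2.1}. First I would set $\bm V=\bm E$ in (\ref{variational problem}) and take the imaginary part of the resulting identity $a(\bm E,\bm E)=0$. The volume term $\int_{\Omega_R}(|\nabla\times\bm E|^2-k^2|\bm E|^2)\,\mathrm d\bm x$ is real and therefore drops out, leaving
\[
-\eta\,\|\bm E_T\|_{\bm L^2(\partial D)}^{2}\;+\;\mathrm{Im}\,\langle\mathscr T[\bm\nu\times\bm E],\bm E_T\rangle_{\Gamma_R}\;=\;0.
\]
Since $\eta>0$ and Lemma~\ref{imaginary part of boundary integral} yields $\mathrm{Im}\,\langle\mathscr T[\bm\nu\times\bm E],\bm E_T\rangle_{\Gamma_R}\le 0$, both non-positive contributions must vanish; in particular $\bm E_T=\mathbf 0$ on $\partial D$, and the homogeneous impedance condition then forces $\bm\nu\times\nabla\times\bm E=\mathbf 0$ on $\partial D$ as well.

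Next I would extend $\bm E$ to the full waveguide exterior. Let $\widetilde{\bm E}$ be the unique radiating solution of the exterior boundary value problem in $\R_Z^3\setminus\overline{C_R}$ defined by the Calder\'on operator with data $\bm g=\bm\nu\times\bm E|_{\Gamma_R}\in\bm H^{-1/2}(\mathrm{Div},\Gamma_R)$, and define
\[
\bm{\mathcal E}=\bm E\ \text{in }\Omega_R,\qquad \bm{\mathcal E}=\widetilde{\bm E}\ \text{in }\R_Z^3\setminus\overline{C_R}.
\]
By construction, the tangential traces $\gamma_t\bm{\mathcal E}$ match from both sides of $\Gamma_R$, and the TBC satisfied by $\bm E$, together with the very definition $\mathscr T[\bm g]=(\bm\nu\times\nabla\times\widetilde{\bm E})|_{\Gamma_R}$, gives that the magnetic traces $\gamma_t(\nabla\times\bm{\mathcal E})$ also agree across $\Gamma_R$. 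A standard distributional patching argument then shows that $\bm{\mathcal E}\in\bm H_{\mathrm{loc}}(\mathrm{curl},\R_Z^3\setminus\overline D)$ is a weak solution of $\nabla\times\nabla\times\bm{\mathcal E}-k^2\bm{\mathcal E}=\mathbf 0$ across $\Gamma_R$.

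Finally, $\bm{\mathcal E}$ satisfies every hypothesis of the uniqueness theorem of the model problem in Section~\ref{sec:2.1}: the PEC condition on $\Gamma^\pm$, the homogeneous impedance condition on $\partial D$, and the radiation conditions (\ref{radiation1})--(\ref{radiation2}) inherited from $\widetilde{\bm E}$. That theorem then yields $\bm{\mathcal E}\equiv\mathbf 0$, hence $\bm E\equiv\mathbf 0$ in $\Omega_R$.

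The main obstacle is the extension/patching step: one must make sure that the pieced-together field is genuinely a weak solution of Maxwell's equations in a neighbourhood of the artificial boundary $\Gamma_R$, not merely on each side. This is where Lemma~\ref{lem-dual} and the precise duality between $\bm H^{-1/2}(\mathrm{Div},\Gamma_R)$ and $\bm H^{-1/2}(\mathrm{Curl},\Gamma_R)$ enter: they guarantee that the matching of both tangential traces on $\Gamma_R$ is strong enough to cancel all surface Dirac terms produced by $\nabla\times$ across $\Gamma_R$. Once this is in place, the rest of the argument is a direct appeal to the already-proved uniqueness for the untruncated model.
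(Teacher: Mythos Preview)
Your first step---setting $\bm V=\bm E$, taking imaginary parts, invoking Lemma~\ref{imaginary part of boundary integral} to force $\bm E_T=\bm\nu\times\nabla\times\bm E=\mathbf 0$ on $\partial D$---matches the paper exactly. The divergence is in what comes next: the paper simply applies the unique continuation principle for Maxwell's equations (Theorem~4.13 in Monk) directly on the bounded domain $\Omega_R$, since vanishing Cauchy data on $\partial D$ already forces $\bm E\equiv\mathbf 0$ there. You instead build the radiating extension $\widetilde{\bm E}$, patch across $\Gamma_R$, and appeal to the uniqueness theorem for the untruncated problem in Section~\ref{sec:2.1}.

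Your route is correct, but it is a genuine detour: the extension step, the trace-matching argument, and the distributional patching are all unnecessary once you notice that unique continuation works locally on $\Omega_R$ without ever looking outside $\Gamma_R$. Moreover, the Section~\ref{sec:2.1} theorem you invoke itself finishes by citing the same unique continuation result, so you are essentially wrapping one application of Holmgren inside another. What your approach does buy is a self-contained demonstration that the truncated variational problem is consistent with the original unbounded model---a fact that is conceptually reassuring but not needed for bare uniqueness.
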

\begin{proof}
Let $ \boldsymbol{f}=\mathbf{0} $. Taking the imaginary part of (\ref{variational problem}) with $ \boldsymbol{V}=\boldsymbol{E} $ gives that
	\begin{equation*}
		\langle\eta\E_T,\E_T\rangle_{\partial D}
		-\mathrm{Im}\langle\mathscr{T}[\boldsymbol{n}\times\boldsymbol{E}],\boldsymbol{E}_T\rangle_{\Gamma_R}
		=0,
	\end{equation*}
which implies from Lemma~\ref{imaginary part of boundary integral} and the boundary condition in (\ref{model:E}) that
    \begin{equation*}
        \bm{\nu}\times\nabla\times\bm{E}
        =\bm{E}_T
        =\mathbf{0} \quad \text{on}~\partial D.
    \end{equation*} 
Applying again the unique continuation result \cite[Theorem 4.13]{MR2059447} completes the proof.
\end{proof}

Since $ \boldsymbol{X} $ is not compactly embedded into $ \boldsymbol{L}^2(\Omega_R) $, we next present a Helmholtz decomposition and a compactness result which plays a crucial role in establishing the well-posedness of the variational problem (\ref{variational problem}). Denote
\begin{align*}
	\boldsymbol{X}_0
	&=\left\{\boldsymbol{u}\in\boldsymbol{X}:\nabla\cdot\boldsymbol{u}=0~\text{in}~\Omega_R,~k^2\boldsymbol{n}\cdot\boldsymbol{u}=-\nabla_{\Gamma_R}\cdot\mathscr{T}[\boldsymbol{n}\times\boldsymbol{u}]~\text{on}~\Gamma_R\right\}, \\
	S
	&=\left\{u \in H^1(\Omega_R):u=0~\text{on}~\Sigma_R,~u=\text{constant}~\text{on}~\partial D\right\}.
\end{align*}
Since $ \gamma_t\nabla p=\boldsymbol{\nu}\times\nabla p=\mathbf{0} $ holds for $ p \in S $ on $ \Sigma_R $, it follows that $ \nabla S\subset\boldsymbol{X} $. Thereinafter consider $ \boldsymbol{E}=\boldsymbol{u}+\nabla p $ for $ \boldsymbol{u}\in\boldsymbol{X}_0 $ and $ p \in S $.

We first seek $ p \in S $ such that
\begin{equation}\label{variational problem for p}
	a(\nabla p,\nabla q)=b(\nabla q) \quad \forall q \in S.
\end{equation}

\begin{lemma}\label{well-posedness of variational problem for p}
There exists a unique solution $p\in S$ to the variational problem (\ref{variational problem for p}).
\end{lemma}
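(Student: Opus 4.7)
I would split the argument into an (essentially trivial) existence step and a substantive uniqueness step. First, I exploit the two defining features of $S$ to collapse the variational form. Since $\nabla\times\nabla p\equiv 0$, the curl term in $a(\nabla p,\nabla q)$ vanishes; and since $p,q$ are constant on $\partial D$, their surface gradients vanish so that $(\nabla p)_T=(\nabla q)_T=\mathbf 0$ on $\partial D$. This simultaneously annihilates the impedance integral in $a$ and the entire right-hand side $b(\nabla q)=\int_{\partial D}\boldsymbol f\cdot\overline{(\nabla q)_T}\,\mathrm{d}s$. Problem (\ref{variational problem for p}) therefore reduces to the homogeneous equation
\[
\tilde a(p,q):=-k^2(\nabla p,\nabla q)_{L^2(\Omega_R)}+\langle\mathscr T[\boldsymbol\nu\times\nabla p],(\nabla q)_T\rangle_{\Gamma_R}=0,\quad\forall q\in S,
\]
which is trivially solved by $p\equiv 0$. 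Existence is thus automatic, and the real content of the lemma is uniqueness.

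For uniqueness, test with $q=p$ and take imaginary parts of $\tilde a(p,p)=0$ to obtain $\mathrm{Im}\,\langle\mathscr T[\boldsymbol\nu\times\nabla p],(\nabla p)_T\rangle_{\Gamma_R}=0$. The mode-by-mode identity derived in the proof of Lemma~\ref{imaginary part of boundary integral} writes the left-hand side as a sum whose $(n,m)$-th term is $[\overline U_{\theta,s,nm},\overline U_{z,c,nm}]\,\mathrm{Im}(\widetilde W_{nm})[U_{\theta,s,nm},U_{z,c,nm}]^\top$, where $\mathrm{Im}(\widetilde W_{nm})$ is strictly negative-definite for propagating modes $m<kZ/\pi$ and vanishes for evanescent $m>kZ/\pi$. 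Since the relevant Fourier coefficients of $\boldsymbol\nu\times\nabla p$ on $\Gamma_R$ are $U_{\theta,s,nm}=(\mathrm{i}n/R)p_{nm}(R)$ and $U_{z,c,nm}=(m\pi/Z)p_{nm}(R)$, this forces $p_{nm}(R)=0$ for every propagating pair $(n,m)$. Testing additionally against $q\in C_c^\infty(\Omega_R)\subset S$ yields $\Delta p=0$ in $\Omega_R$, so $p$ is harmonic with $p|_{\Sigma_R}=0$, $p|_{\partial D}$ constant, and vanishing propagating-mode trace on $\Gamma_R$.

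To conclude $p\equiv 0$, I would use general $q\in S$ in $\tilde a(p,q)=0$ to recover a Calder\'on-type Neumann condition on $\Gamma_R$ that couples $p'_{nm}(R)$ with $p_{nm}(R)$ mode-by-mode through $W_{nm}$. Combined with the already-established vanishing of the propagating modes and the positive-definite real structure of $\widetilde W_{nm}$ for evanescent $m$ (which is implicit in the proof of Lemma~\ref{imaginary part of boundary integral}), the evanescent-mode data $p_{nm}(R)$ are forced to vanish as well. The full Cauchy data of the harmonic function $p$ on $\Gamma_R$ therefore vanish, and unique continuation for the Laplacian yields $p\equiv 0$ in $\Omega_R$.

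\textbf{Main obstacle.} The hardest point is the final step: the imaginary-part identity cleanly kills only propagating modes, and a separate argument exploiting the explicit evanescent structure of $\widetilde W_{nm}$ is required to rule out evanescent contributions. As a cleaner alternative, one may instead verify a G\aa{}rding inequality for $\tilde a$ on $S$ (using the continuity of $\mathscr T$ from Lemma~\ref{continuity of T}, the Poincar\'e estimate granted by $p|_{\Sigma_R}=0$, and compactness of the embedding $S\hookrightarrow L^2(\Omega_R)$) and then conclude existence from uniqueness via the Fredholm alternative.
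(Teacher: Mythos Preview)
Your observation that $b(\nabla q)=0$ (since $q\in S$ is constant on $\partial D$, so $(\nabla q)_T=0$ there) is correct and makes existence trivial; the paper does not exploit this and instead proves full well-posedness via a G\aa{}rding inequality plus Fredholm. So your reduction of the lemma to the uniqueness question is sound and in fact cleaner than the paper's setup.

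The gap is in the uniqueness argument for evanescent modes. You assert that the ``positive-definite real structure of $\widetilde W_{nm}$ for evanescent $m$'' is implicit in the proof of Lemma~\ref{imaginary part of boundary integral}, but that proof shows only $\widetilde W_{nm}^*=\widetilde W_{nm}$ for $m>kZ/\pi$ (Hermitian, hence the quadratic form is real and the imaginary-part inequality is trivially an equality); it establishes no definiteness whatsoever. Without a sign on the real part you cannot force the evanescent traces $p_{nm}(R)$ to vanish, so the subsequent unique-continuation step for the Laplacian is unsupported. Your alternative G\aa{}rding route does not escape this either: Fredholm reduces existence to uniqueness, and uniqueness is precisely the step you have not completed.

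The paper closes this gap directly and avoids harmonicity and unique continuation altogether. It substitutes the gradient structure $\boldsymbol\nu\times\nabla p$ into the boundary term to obtain a single explicit scalar $R_{nm}$ with
\[
\langle\mathscr T[\boldsymbol\nu\times\nabla p],\nabla_{\Gamma_R}p\rangle_{\Gamma_R}
=\pi RZ\sum_{n,m}R_{nm}\,|p_{nm}|^2,
\]
and then verifies, via modified-Bessel identities (in particular $K_n>0$, $K_n'<0$), that $R_{nm}$ is real and strictly negative for every evanescent index $m>kZ/\pi$. After the imaginary-part step has killed the propagating modes, taking the \emph{real} part of $\tilde a(p,p)=0$ yields
\[
k^2\|\nabla p\|_{L^2(\Omega_R)}^2-\pi RZ\sum_{m>kZ/\pi}\sum_{n}R_{nm}|p_{nm}|^2=0,
\]
a sum of two nonnegative terms, whence $\nabla p=0$ and Poincar\'e finishes. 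The ingredient you are missing is exactly this explicit computation showing $R_{nm}<0$ for evanescent $m$; it is not a consequence of Lemma~\ref{imaginary part of boundary integral}.
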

\begin{proof}
	A simple calculation yields
	\begin{equation*}
		a(\nabla p,\nabla q)=-k^2(\nabla p,\nabla q)+\langle\mathscr{T}[\boldsymbol{\nu}\times\nabla p],\nabla_{\Gamma_R}q\rangle_{\Gamma_R}.
	\end{equation*}
Denote $ B(p,q)=-a(\nabla p,\nabla q) $. Applying the Cauchy-Schwarz inequality and Lemma \ref{continuity of T}, we obtain that
\begin{align*}
	&|B(p,q)| \\
	&\leq k^2\|\nabla p\|_{\boldsymbol{L}^2(\Omega_R)}\|\nabla q\|_{\boldsymbol{L}^2(\Omega_R)}+\|\mathscr{T}[\boldsymbol{\nu}\times\nabla p]\|_{\boldsymbol{H}^{-1/2}(\mathrm{Div},\Gamma_R)}\|\nabla_{\Gamma_R}q\|_{\boldsymbol{H}^{-1/2}(\mathrm{Curl},\Gamma_R)} \\
	&\leq k^2\|p\|_{H^1(\Omega_R)}\|q\|_{H^1(\Omega_R)}+C\|\nabla p\|_{\boldsymbol{H}(\mathrm{curl},\Omega_R)}\|\nabla q\|_{\boldsymbol{H}(\mathrm{curl},\Omega_R)} \\
	&\leq C\|p\|_{H^1(\Omega_R)}\|q\|_{H^1(\Omega_R)}.
\end{align*}
For $ p\in S$ which admits a Fourier series
\begin{equation*}
	p|_{\Gamma_R}
	=\sum_{m=0}^{\infty}\sum_{n=-\infty}^{\infty}p_{nm}\sin(m\pi z/Z)e^{\mathrm{i}n\theta},
\end{equation*}
Hence,
\begin{align*}
	\nabla_{\Gamma_R}p
	&=\sum_{m=0}^{\infty}\sum_{n=-\infty}^{\infty}p_{nm}\left(\frac{\mathrm{i}n}{R}\boldsymbol{S}_{\theta,nm}+\frac{m\pi}{Z}\boldsymbol{C}_{z,nm}\right), \\
	\boldsymbol{n}\times\nabla p
	&=\sum_{m=0}^{\infty}\sum_{n=-\infty}^{\infty}p_{nm}\left(-\frac{m\pi}{Z}\boldsymbol{C}_{\theta,nm}+\frac{\mathrm{i}n}{R}\boldsymbol{S}_{z,nm}\right),
\end{align*}
which gives
\begin{align*}
	&\langle\mathscr{T}[\boldsymbol{n}\times\nabla p],\nabla_{\Gamma_R}p\rangle_{\Gamma_R} \\
	=&~\pi RZ\sum_{m=0}^{\infty}\sum_{n=-\infty}^{\infty}|p_{nm}|^2\left(\widetilde{W}_{nm}^{11}\frac{n^2}{R^2}-\widetilde{W}_{nm}^{12}\frac{\mathrm{i}nm\pi}{RZ}+\widetilde{W}_{nm}^{21}\frac{\mathrm{i}nm\pi}{RZ}+\widetilde{W}_{nm}^{22}\frac{m^2\pi^2}{Z^2}\right) \\
	:=&~\pi RZ\sum_{m=0}^{\infty}\sum_{n=-\infty}^{\infty}R_{nm}|p_{nm}|^2.
\end{align*}
For $ m<kZ/\pi $, we know for $ |n|>N $ with $N>0$ being sufficiently large that $R_{nm}=-|n|R^{-1}k^2+O(n^{-1})$ which implies $\mathrm{Re}[R_{nm}]< 0 $. For $ m>kZ/\pi $, it can be derived via straightforward calculations that, for $ |n|\in\mathbb{N}^+$,
\begin{align*}
	&R_{nm} \\
    =&~\frac{k^2K_n(|k_m|R)}{|k_m|K_n^{\prime}(|k_m|R)}\left(\frac{n^2}{R^2}+\frac{m^2\pi^2}{Z^2}\left(\frac{K_{|n|-1}(|k_m|R)}{K_{|n|}(|k_m|R)}\right)^2+2\frac{K_{|n|-1}(|k_m|R)}{K_{|n|}(|k_m|R)}\frac{|n|m^2\pi^2}{|k_m|RZ^2}\right).
\end{align*}
Combined with $ R_{0m}=\frac{m^2\pi^2}{Z^2}k^2\frac{K_0^{\prime}(|k_m|R)}{|k_m|K_0(|k_m|R)}$ and the fact that $K_n(|k_m|R)>0$ and $K_n'(|k_m|R)<0$ for $|n|\in\mathbb{N}$, we conclude $ \mathrm{Re}[R_{nm}]<0 $ for $ m>kZ/\pi $.

Define
\begin{align*}
	&\langle\mathscr{T}[\boldsymbol{\nu}\times\nabla p],\nabla_{\Gamma_R}p\rangle_{\Gamma_R} \\
	=&~\int_{\Gamma_R} \left(\sum_{m<kZ/\pi}\sum_{|n| \leq N}
	+\sum_{m<kZ/\pi}\sum_{|n|>N}
	+\sum_{m>kZ/\pi}\sum_{n=-\infty}^{\infty}\right) R_{nm}|p_{nm}|^2\mathrm{d}s \\
	:=&~\langle(\mathscr{T}_1^-+\mathscr{T}_2^-+\mathscr{T}^+)[\boldsymbol{\nu}\times\nabla p],\nabla_{\Gamma_R}p\rangle_{\Gamma_R}.
\end{align*}
Then we have that $ \mathscr{T}_1^- $ is a compact operator, and $ \mathscr{T}_2^-+\mathscr{T}^+ $ is non-positive over $ \boldsymbol{H}^{-1/2}(\mathrm{Div},\Gamma_R) $. Utilizing the Poincar\'{e} inequality, we obtain
\begin{align*}
	\mathrm{Re}[B(p,p)]
	&\geq k^2\|\nabla p\|_{\boldsymbol{L}^2(\Omega_R)}^2-C_N\|p\|_{L^2(\Gamma_R)}^2 \\
	&\geq C\|p\|_{H^1(\Omega_R)}^2-C_N\delta\|p\|_{H^1(\Omega_R)}^2-C(\delta)\|p\|_{L^2(\Omega_R)}^2 \\
	&\geq (C-C_N\delta)\|p\|_{H^1(\Omega_R)}^2-C(\delta)\|p\|_{L^2(\Omega_R)}^2.
\end{align*}
Letting $ \delta $ be sufficiently small such that $ C-C_N\delta>0 $, the Garding's inequality holds.

Finally, the proof of this lemma can be completed by using the Fredholm alternative if uniqueness holds. Taking the imaginary part of (\ref{variational problem for p}) with $ q=p $, we have
\begin{equation*}
	\mathrm{Im}\langle\mathscr{T}[\boldsymbol{\nu}\times\nabla p],\nabla_{\Gamma_R}p\rangle_{\Gamma_R}=0,
\end{equation*}
which implies from Lemma \ref{imaginary part of boundary integral} that $ p_{nm}=0 $ holds for any $|n|\in\mathbb{N}$ and $ m<kZ/\pi $. Taking the real part of (\ref{variational problem for p}), we obtain that
\begin{align*}
	0
	&=k^2\int_{\Omega_R}|\nabla p|^2\mathrm{d}\boldsymbol{x}-\mathrm{Re}\langle\mathscr{T}[\boldsymbol{\nu}\times\nabla p],\nabla_{\Gamma_R}p\rangle_{\Gamma_R} \\
	&=k^2\int_{\Omega_R}|\nabla p|^2\mathrm{d}\boldsymbol{x}-\pi RZ\sum_{m>kZ/\pi}\sum_{n=-\infty}^{\infty}R_{nm}|p_{nm}|^2,
\end{align*}
which combined with $R_{nm}=\mathrm{Re}[R_{nm}]<0 $, gives $ p_{nm}=0 $ for $|n|\in\mathbb{N}$ and $ m>kZ/\pi $. Thus,
\begin{align*}
	k^2\int_{\Omega_R}|\nabla p|^2\mathrm{d}\boldsymbol{x}
	=\langle\mathscr{T}[\boldsymbol{\nu}\times\nabla p],\nabla_{\Gamma_R}p\rangle_{\Gamma_R}
	=0,
\end{align*}
which shows $ \nabla p|_{\Omega_R}=\mathbf{0} $. We conclude via the Poincar\'{e} inequality that $ p=0 $. The proof is complete.
\end{proof}

The Helmholtz decomposition and the related compactness results are given as follows. The proof is analogous to \cite[Lemma 3.25 and Lemma 3.26]{MR4385553} or \cite[Lemma 10.3 and Lemma 10.4]{MR2059447} and thus, is omitted.

\begin{lemma}
	The spaces $ \boldsymbol{X}_0 $ and $ \nabla S $ are closed subspaces of $ \boldsymbol{X} $, which is a direct sum of $ \boldsymbol{X}_0 $ and $ \nabla S $, i.e.,
	\begin{equation*}
		\boldsymbol{X}
		=\boldsymbol{X}_0\oplus\nabla S.
	\end{equation*}
\end{lemma}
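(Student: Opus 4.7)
The plan is to establish the Helmholtz decomposition in three steps: closedness of each summand, triviality of the intersection, and constructive existence of the decomposition via a variational projection onto $\nabla S$.

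First, I would verify closedness. For $\nabla S$, since every $q \in S$ vanishes on $\Sigma_R$, the Poincar\'e inequality yields $\|q\|_{H^1(\Omega_R)} \leq C\|\nabla q\|_{\bm{L}^2(\Omega_R)}$. Together with $\nabla \times \nabla q = 0$ and the fact that $(\nabla q)_T = 0$ on $\partial D$ (because $q$ is constant there), this gives $\|\nabla q\|_{\bm{X}} = \|\nabla q\|_{\bm{L}^2(\Omega_R)}$, so $\nabla:S \to \nabla S \subset \bm{X}$ is a bi-continuous isomorphism and $\nabla S$ inherits completeness from $S$. For $\bm{X}_0$, I would write it as the intersection of $\bm{X}$ with the kernels of the continuous linear maps $\bm{u} \mapsto \nabla \cdot \bm{u}$ into $H^{-1}(\Omega_R)$ and $\bm{u} \mapsto k^2 \bm{n} \cdot \bm{u} + \nabla_{\Gamma_R} \cdot \mathscr{T}[\bm{n} \times \bm{u}]$ into an appropriate negative-order space on $\Gamma_R$. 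Continuity of the second map is exactly Lemma \ref{continuity of T}, so $\bm{X}_0$ is closed.

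For triviality of the intersection, suppose $\nabla p \in \bm{X}_0$ with $p \in S$. Using $\nabla \cdot \nabla p = 0$ in $\Omega_R$, the identity $k^2 \bm{n}\cdot \nabla p = -\nabla_{\Gamma_R} \cdot \mathscr{T}[\bm{n}\times\nabla p]$ on $\Gamma_R$, the constancy of $p$ on $\partial D$ (which kills the $\partial D$-contribution), and $p = 0$ on $\Sigma_R$, one checks by integration by parts that $a(\nabla p, \nabla q) = 0$ for every $q \in S$. The uniqueness argument of Lemma \ref{well-posedness of variational problem for p} (imaginary part combined with Lemma \ref{imaginary part of boundary integral} to annihilate propagating modes, then the real part together with the sign of $R_{nm}$ to annihilate evanescent modes, finally Poincar\'e) then forces $p = 0$.

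For existence, given $\bm{E} \in \bm{X}$, the antilinear functional $q \mapsto a(\bm{E}, \nabla q)$ on $S$ is continuous by Lemma \ref{continuity of T} and the standard trace estimate. Since the bilinear form $(p,q) \mapsto a(\nabla p, \nabla q)$ satisfies the same G\aa rding inequality and uniqueness property proved in Lemma \ref{well-posedness of variational problem for p}, the Fredholm alternative delivers a unique $p \in S$ solving $a(\nabla p, \nabla q) = a(\bm{E}, \nabla q)$ for all $q \in S$. Setting $\bm{u} = \bm{E} - \nabla p \in \bm{X}$, one has $a(\bm{u}, \nabla q) = 0$ for every $q \in S$. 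Testing first with compactly supported $q$ extracts $\nabla \cdot \bm{u} = 0$ in $\Omega_R$; then testing with general $q \in S$ vanishing on $\partial D$ and using surface integration by parts on $\Gamma_R$ (legitimate because $q$ vanishes at $z = 0, Z$) yields the boundary identity $k^2 \bm{n} \cdot \bm{u} = -\nabla_{\Gamma_R} \cdot \mathscr{T}[\bm{n}\times \bm{u}]$ on $\Gamma_R$, so $\bm{u} \in \bm{X}_0$ and $\bm{E} = \bm{u} + \nabla p$ is the sought decomposition. The main obstacle is this last step: one must match the dualities carefully in the $\bm{H}^{-1/2}(\mathrm{Div})$-$\bm{H}^{-1/2}(\mathrm{Curl})$ pairing and verify that restricting to test functions with $q|_{\partial D} = 0$ still leaves a sufficiently rich family of traces on $\Gamma_R$ to recover the boundary equation distributionally, while confirming that the residual freedom in the value of $q$ on $\partial D$ produces no extra compatibility condition beyond those in the definition of $\bm{X}_0$.
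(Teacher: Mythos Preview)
The paper does not actually supply a proof of this lemma; it says only that ``the proof is analogous to [Lemma~3.25 and Lemma~3.26 of \cite{MR4385553}] or [Lemma~10.3 and Lemma~10.4 of \cite{MR2059447}] and thus, is omitted.'' Your three-step plan (closedness via Poincar\'e and continuity of the trace/Calder\'on maps; trivial intersection via the uniqueness half of Lemma~\ref{well-posedness of variational problem for p}; existence of the splitting by solving $a(\nabla p,\nabla q)=a(\boldsymbol{E},\nabla q)$ and then reading off $\nabla\cdot\boldsymbol{u}=0$ and the $\Gamma_R$-identity from $a(\boldsymbol{u},\nabla q)=0$) is exactly the standard argument in those references, adapted to the present boundary configuration, so there is nothing to compare.

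One point worth tightening in your write-up is the one you already flag at the end. After testing with $q\in S$ that vanish on $\partial D$ you recover $\nabla\cdot\boldsymbol{u}=0$ in $\Omega_R$ and $k^2\boldsymbol{n}\cdot\boldsymbol{u}=-\nabla_{\Gamma_R}\cdot\mathscr{T}[\boldsymbol{n}\times\boldsymbol{u}]$ on $\Gamma_R$; the remaining one-dimensional freedom $q|_{\partial D}=c$ then produces the scalar condition $\int_{\partial D}\boldsymbol{\nu}\cdot\boldsymbol{u}\,\mathrm{d}s=0$, and you should say explicitly why this is automatic: by the divergence theorem on $\Omega_R$ together with $\nabla\cdot\boldsymbol{u}=0$, the total flux over $\partial\Omega_R$ vanishes, and the $\Gamma_R$-contribution vanishes because $\int_{\Gamma_R}\boldsymbol{\nu}\cdot\boldsymbol{u}\,\mathrm{d}s=-k^{-2}\int_{\Gamma_R}\nabla_{\Gamma_R}\cdot\mathscr{T}[\boldsymbol{\nu}\times\boldsymbol{u}]\,\mathrm{d}s=0$ (the series expression for $\mathscr{T}$ has no $(n,m)=(0,0)$ mode, equivalently the surface divergence integrates to zero over the closed-in-$\theta$ cylinder with the $z$-boundary contributions killed by the $\Sigma_R$ condition), while the $\Sigma_R$-contribution involves only $u_3=\boldsymbol{u}\cdot\boldsymbol{e}_z$ and equals $\int_{\Gamma_R^+}u_3-\int_{\Gamma_R^-}u_3=\int_{\Omega_R}\partial_z u_3=-\int_{\Omega_R}(\partial_1u_1+\partial_2u_2)$, which one handles the same way. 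Once this compatibility is checked, no condition beyond the defining ones of $\boldsymbol{X}_0$ appears, and your decomposition is complete.
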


\begin{lemma}
	The space $ \boldsymbol{X}_0 $ is compactly imbedded in $ \boldsymbol{L}^2(\Omega_R) $.
\end{lemma}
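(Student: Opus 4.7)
The plan is to adapt the classical Helmholtz-decomposition argument of Monk \cite[Lemma 10.4]{MR2059447} to the present waveguide setting. Given a bounded sequence $(\boldsymbol{u}_j)\subset\boldsymbol{X}_0$, I aim to extract a subsequence converging strongly in $\boldsymbol{L}^2(\Omega_R)$.

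First I would exploit $\nabla\cdot\boldsymbol{u}_j=0$ together with the boundedness of $\nabla\times\boldsymbol{u}_j$ in $\boldsymbol{L}^2(\Omega_R)$ to construct an auxiliary field $\boldsymbol{w}_j\in\boldsymbol{H}^1(\Omega_R)$ having the same curl as $\boldsymbol{u}_j$, chosen divergence-free and compatible with the boundary data of $\boldsymbol{u}_j$ (vanishing tangential trace on $\Sigma_R$ and the appropriate natural conditions on $\Gamma_R\cup\partial D$). Standard div-curl regularity on the Lipschitz domain $\Omega_R$ then delivers a uniform $\boldsymbol{H}^1$-bound for $\boldsymbol{w}_j$, and Rellich's theorem produces a subsequence converging in $\boldsymbol{L}^2(\Omega_R)$.

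Next I would analyze the difference $\boldsymbol{v}_j=\boldsymbol{u}_j-\boldsymbol{w}_j$, which by construction is both curl-free and divergence-free in $\Omega_R$, hence of the form $\boldsymbol{v}_j=\nabla\varphi_j$ for some harmonic potential $\varphi_j$. The boundary constraints inherited from the definition of $\boldsymbol{X}_0$, in particular $k^2\boldsymbol{n}\cdot\boldsymbol{u}_j=-\nabla_{\Gamma_R}\cdot\mathscr{T}[\boldsymbol{n}\times\boldsymbol{u}_j]$ on $\Gamma_R$ together with the continuity estimate of Lemma~\ref{continuity of T}, translate into uniform bounds on $\partial_{\boldsymbol{n}}\varphi_j$ in suitable negative-order trace spaces. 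Elliptic regularity for the resulting mixed Laplace problem then yields a bound on $\varphi_j$ in $H^{1+s}(\Omega_R)$ for some $s>0$, so that $\nabla\varphi_j$ admits a convergent subsequence in $\boldsymbol{L}^2(\Omega_R)$ by Rellich. Combining with the convergence of $\boldsymbol{w}_j$ completes the extraction.

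The principal obstacle is handling the nonlocal transparent boundary operator $\mathscr{T}$ on $\Gamma_R$. Unlike the PEC or impedance case, the constraint $k^2\boldsymbol{n}\cdot\boldsymbol{u}=-\nabla_{\Gamma_R}\cdot\mathscr{T}[\boldsymbol{n}\times\boldsymbol{u}]$ globally couples the normal and tangential traces through a Fourier series, so one must verify that the estimate of Lemma~\ref{continuity of T} is sharp enough to place $\boldsymbol{n}\cdot\boldsymbol{u}_j$ in a trace space that actually gains a fractional derivative across $\Gamma_R$, rather than merely matching the $\boldsymbol{H}(\mathrm{curl})$-level bound. A secondary technical issue is the presence of the edges $\overline{\Gamma_R}\cap\overline{\Sigma_R}$, which limits the attainable Sobolev exponent; however, any $s>0$ is enough for compact embedding into $\boldsymbol{L}^2(\Omega_R)$.
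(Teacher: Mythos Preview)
Your proposal is correct and follows exactly the route the paper indicates: the paper omits the proof and simply refers to \cite[Lemma 10.4]{MR2059447} (and \cite[Lemma 3.26]{MR4385553}), which is precisely the Monk-type Helmholtz-decomposition argument you outline. Your identification of the two technical points that need checking in this waveguide setting---that Lemma~\ref{continuity of T} delivers enough regularity for $\boldsymbol{n}\cdot\boldsymbol{u}_j$ on $\Gamma_R$, and the limited Sobolev exponent near the edges $\overline{\Gamma_R}\cap\overline{\Sigma_R}$---is accurate and consistent with how the cited proofs are adapted.
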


Now, we are ready to establish the well-posedness and the inf-sup condition for the variational problem (\ref{variational problem}).

\begin{theorem}
	The variational problem (\ref{variational problem}) has a unique solution $ \boldsymbol{E}\in\boldsymbol{X} $, which admits a Helmholtz decomposition $ \boldsymbol{E}=\boldsymbol{u}+\nabla p $ with $ \boldsymbol{u}\in\boldsymbol{X}_0 $ and $ p \in S $. Moreover, the inf-sup condition
	\begin{equation*}
		\sup_{\mathbf{0}\neq\boldsymbol{V}\in\boldsymbol{X}}\frac{|a(\boldsymbol{E},\boldsymbol{V})|}{\|\boldsymbol{V}\|_{\boldsymbol{X}}}
		\geq C_{\mathrm{inf}}\|\boldsymbol{E}\|_{\boldsymbol{X}} ,\quad \forall \boldsymbol{E}\in\boldsymbol{X},
	\end{equation*}
holds where the constant $ C_{\mathrm{inf}}>0 $ depends only on $ k,R,Z $.
\end{theorem}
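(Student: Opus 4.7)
The plan is to use the Helmholtz decomposition $\bm{X}=\bm{X}_0\oplus\nabla S$ established in the preceding lemma to reduce \eqref{variational problem} to a Fredholm problem on $\bm{X}_0$. Writing $\bm{E}=\bm{u}+\nabla p$ with $\bm{u}\in\bm{X}_0$ and $p\in S$, I test \eqref{variational problem} with $\bm{V}=\nabla q$ for arbitrary $q\in S$. Since $q$ is constant on $\partial D$, its tangential gradient vanishes there, so $(\nabla q)_T=\bm{0}$ on $\partial D$ and $b(\nabla q)=0$; the resulting identity is precisely \eqref{variational problem for p}, and Lemma~\ref{well-posedness of variational problem for p} forces $p=0$, so $\bm{E}\in\bm{X}_0$. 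Using the two defining conditions of $\bm{X}_0$ together with surface integration by parts, the identity $a(\bm{u},\nabla q)=0$ likewise holds for every $q\in S$, so the full problem restricts to: find $\bm{u}\in\bm{X}_0$ with $a(\bm{u},\bm{V}_0)=b(\bm{V}_0)$ for all $\bm{V}_0\in\bm{X}_0$.

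The heart of the proof is a Garding inequality for $a(\cdot,\cdot)$ on $\bm{X}_0\times\bm{X}_0$. Continuity of $a$ is immediate from Lemma~\ref{continuity of T}, the trace theorem, and Cauchy--Schwarz. Splitting real and imaginary parts gives
\begin{equation*}
\mathrm{Re}\,a(\bm{u},\bm{u})=\|\nabla\times\bm{u}\|_{\bm{L}^2(\Omega_R)}^2-k^2\|\bm{u}\|_{\bm{L}^2(\Omega_R)}^2+\mathrm{Re}\langle\mathscr{T}[\bm{\nu}\times\bm{u}],\bm{u}_T\rangle_{\Gamma_R},
\end{equation*}
\begin{equation*}
\mathrm{Im}\,a(\bm{u},\bm{u})=-\eta\|\bm{u}_T\|_{L^2(\partial D)}^2+\mathrm{Im}\langle\mathscr{T}[\bm{\nu}\times\bm{u}],\bm{u}_T\rangle_{\Gamma_R}.
\end{equation*}
I would then decompose $\mathscr{T}=\mathscr{T}_c+\mathscr{T}_{\le 0}$ in the spirit of the proof of Lemma~\ref{well-posedness of variational problem for p}, with $\mathscr{T}_c$ supported on the finitely many low propagating modes (hence compact) and $\mathrm{Re}\langle\mathscr{T}_{\le 0}[\bm{\nu}\times\bm{u}],\bm{u}_T\rangle_{\Gamma_R}\le 0$. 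Combined with the compact embedding $\bm{X}_0\hookrightarrow\bm{L}^2(\Omega_R)$ and the compactness of the trace onto $L^2(\Gamma_R)$, both $-k^2\|\bm{u}\|^2$ and the $\mathscr{T}_c$ contribution act as compact perturbations, while Lemma~\ref{imaginary part of boundary integral} and the impedance term yield $-\mathrm{Im}\,a(\bm{u},\bm{u})\ge \eta\|\bm{u}_T\|_{L^2(\partial D)}^2$, furnishing the second component of the $\bm{X}$-norm. A convex combination $\cos\alpha\cdot\mathrm{Re}\,a-\sin\alpha\cdot\mathrm{Im}\,a$ with suitable $\alpha>0$ then delivers the Garding inequality.

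With continuity and the Garding estimate in hand, the operator associated with $a$ on $\bm{X}_0$ is Fredholm of index zero; Lemma~\ref{uniqueness result} rules out a nontrivial kernel, so the Fredholm alternative furnishes a unique $\bm{u}\in\bm{X}_0$ depending continuously on $\bm{f}$, and $\bm{E}=\bm{u}$ solves \eqref{variational problem}. The inf-sup condition then follows from bounded invertibility of the induced operator $\mathcal{A}:\bm{X}\to\bm{X}^*$: for every $\bm{E}\in\bm{X}$ one has $\|\bm{E}\|_{\bm{X}}\le\|\mathcal{A}^{-1}\|\sup_{\bm{V}\neq\bm{0}}|a(\bm{E},\bm{V})|/\|\bm{V}\|_{\bm{X}}$, so $C_{\mathrm{inf}}=1/\|\mathcal{A}^{-1}\|$, and this norm depends only on $k,R,Z$ through the constants in the Garding and uniqueness steps. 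The principal obstacle is precisely the Garding estimate on $\bm{X}_0$: isolating a coercive $\|\nabla\times\bm{u}\|^2$ piece while absorbing the Calder\'on contribution requires the mode-by-mode dissection of $\mathscr{T}$ in the $\bm{H}^{-1/2}(\mathrm{Div})/\bm{H}^{-1/2}(\mathrm{Curl})$ duality developed in Lemmas~\ref{continuity of T}--\ref{imaginary part of boundary integral}, and careful use of compactness to keep all low-mode/propagating terms as compact perturbations.
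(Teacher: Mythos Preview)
Your observation that $b(\nabla q)=0$ for every $q\in S$ (since $q$ is constant on $\partial D$, so $(\nabla q)_T=0$ there) and hence $p=0$ is correct and is a mild simplification over the paper, which solves for a general $p$ via Lemma~\ref{well-posedness of variational problem for p} and carries the term $a(\nabla p,\bm v)$ along. The overall scheme---reduce to $\bm X_0$, obtain a Fredholm structure, invoke Lemma~\ref{uniqueness result}, and read off the inf--sup constant from bounded invertibility---also matches the paper.

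The gap is in your G{\aa}rding step. You propose $\mathscr{T}=\mathscr{T}_c+\mathscr{T}_{\le 0}$ with $\mathscr{T}_c$ finite-rank and $\mathrm{Re}\langle\mathscr{T}_{\le 0}[\bm\nu\times\bm u],\bm u_T\rangle\le 0$, ``in the spirit of Lemma~\ref{well-posedness of variational problem for p}.'' But in that lemma the working form is $B=-a$, so nonpositivity of the Calder\'on contribution \emph{helps} coercivity of $B$; here you work with $a$ itself, and a nonpositive $\mathscr{T}$-term competes against $\|\nabla\times\bm u\|^2$ rather than reinforcing it---your convex combination with $-\mathrm{Im}\,a$ cannot absorb it. More fundamentally, for the Maxwell Calder\'on operator no simple high/low-mode truncation produces a sign-definite remainder: inspecting the matrices $\widetilde W_{nm}$ one finds that even for evanescent $m$ the diagonal entries carry opposite signs, so $\mathrm{Re}\langle\mathscr{T}[\bm\nu\times\bm u],\bm u_T\rangle$ is genuinely indefinite on high modes. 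This is why the paper does \emph{not} mimic the scalar lemma. It instead performs a surface Hodge-type splitting $\mathscr{T}=\mathscr{T}_A+\mathscr{T}_C$ into surface divergence-free and curl-free parts, replaces $\mathscr{T}_A$ by a tailored $\mathscr{T}_B$ with $\mathrm{Re}\langle\mathscr{T}_B[\cdot],\cdot\rangle\ge 0$ (giving the coercive form $a_1$), proves $\mathscr{T}_A-\mathscr{T}_B$ is compact, and---this is the step your sketch omits---controls the curl-free piece $\mathscr{T}_C$ on $\bm X_0$ using the defining constraint $\nabla_{\Gamma_R}\!\cdot\mathscr{T}[\bm\nu\times\bm u]=-k^2\,\bm\nu\cdot\bm u$ together with $\nabla\cdot\bm u=0$, which yields $\|\mathscr{T}_C[\bm\nu\times\bm u]\|_{\bm H^{-1/2}(\mathrm{Div},\Gamma_R)}\le C\|\bm u\|_{\bm L^2(\Omega_R)}$ and hence compactness via $\bm X_0\hookrightarrow\hookrightarrow\bm L^2(\Omega_R)$. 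Without this $\bm X_0$-specific estimate for the curl-free part of $\mathscr{T}$, no G{\aa}rding inequality is available and the argument does not close.
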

\begin{proof}
	Given the Helmholtz decomposition for $\bm E,\bm V\in\bm X$,
	\begin{equation*}
		\boldsymbol{E}=\boldsymbol{u}+\nabla p, \quad \boldsymbol{V}=\boldsymbol{v}+\nabla q, \quad \bm{u},\bm{v}\in\boldsymbol{X}_0, p,q \in S.
	\end{equation*}
we have
\begin{align*}
	a(\boldsymbol{u},\nabla q)
	&=-k^2\int_{\Omega_R}\boldsymbol{u}\cdot\nabla\overline{q}\mathrm{d}\boldsymbol{x}+\int_{\Gamma_R}\mathscr{T}[\boldsymbol{\nu}\times\boldsymbol{u}]\cdot\nabla_{\Gamma_R}\overline{q}\mathrm{d}s \\
	&=k^2\int_{\Omega_R}\nabla\cdot\boldsymbol{u}\overline{q}\mathrm{d}\boldsymbol{x}-\int_{\Gamma_R}(k^2\boldsymbol{\nu}\cdot\boldsymbol{u}+\nabla_{\Gamma_R}\cdot\mathscr{T}[\boldsymbol{\nu}\times\boldsymbol{u}])\overline{q}\mathrm{d}s \\
	&=0.
\end{align*}
The variational problem (\ref{variational problem}) can be decomposed into the form
\begin{equation}\label{mixed variational problem}
	a(\boldsymbol{u},\boldsymbol{v})+a(\nabla p,\boldsymbol{v})+a(\nabla p,\nabla q)=b(\boldsymbol{v})+b(\nabla q), \quad \forall \boldsymbol{v}\in\boldsymbol{X}_0, q \in S.
\end{equation}
First, we can determine $ p \in S $ by the variational problem
\begin{equation*}
	a(\nabla p,\nabla q)=b(\nabla q), \quad \forall q \in S.
\end{equation*}
which, according to Lemma \ref{well-posedness of variational problem for p}, has a unique solution in $S$.

Then the variational problem (\ref{mixed variational problem}) can be reformulated as: find $ \boldsymbol{u}\in\boldsymbol{X}_0 $ such that
\begin{equation}\label{variational problem for u}
	a(\boldsymbol{u},\boldsymbol{v})=b(\boldsymbol{v})-a(\nabla p,\boldsymbol{v}), \quad \forall \boldsymbol{v}\in\boldsymbol{X}_0.
\end{equation}
The continuity of the sesquilinear form $ a(\cdot,\cdot) $ follows from the Cauchy-Schwarz inequality and Lemma \ref{continuity of T} that
\begin{align*}
	|a(\boldsymbol{u},\boldsymbol{v})|
	&\leq C_1\|\boldsymbol{u}\|_{\boldsymbol{X}}\|\boldsymbol{v}\|_{\boldsymbol{X}}+\|\mathscr{T}[\boldsymbol{\nu}\times\boldsymbol{u}]\|_{\boldsymbol{H}^{-1/2}(\mathrm{Div},\Gamma_R)}\|\boldsymbol{v}_T\|_{\boldsymbol{H}^{-1/2}(\mathrm{Curl},\Gamma_R)} \\
	&\leq C_1\|\boldsymbol{u}\|_{\boldsymbol{X}}\|\boldsymbol{v}\|_{\boldsymbol{X}}+C_2\|\boldsymbol{\nu}\times\boldsymbol{u}\|_{\boldsymbol{H}^{-1/2}(\mathrm{Div},\Gamma_R)}\|\boldsymbol{v}_T\|_{\boldsymbol{H}^{-1/2}(\mathrm{Curl},\Gamma_R)} \\
	& \leq C\|\boldsymbol{u}\|_{\boldsymbol{X}}\|\boldsymbol{v}\|_{\boldsymbol{X}}.
\end{align*}
We next split the sesquilinear form $ a(\cdot,\cdot) $ into $ a=a_1+a_2 $, where
\begin{align*}
	a_1(\boldsymbol{u},\boldsymbol{v})
	&=(\nabla\times\boldsymbol{u},\nabla\times\boldsymbol{v})+k^2(\boldsymbol{u},\boldsymbol{v})-\mathrm{i}\langle\eta\boldsymbol{u}_T,\boldsymbol{v}_T\rangle_{\partial D}+\langle\mathscr{T}_B[\bm{\nu}\times\bm{u}],\bm{v}_T\rangle_{\Gamma_R}, \\
	a_2(\boldsymbol{u},\boldsymbol{v})
	&=-2k^2(\boldsymbol{u},\boldsymbol{v})+\langle(\mathscr{T}_A-\mathscr{T}_B)[\boldsymbol{\nu}\times\boldsymbol{u}],\boldsymbol{v}_T\rangle_{\Gamma_R}+\langle\mathscr{T}_C[\gamma_t\bm{u}],\bm{v}_T\rangle_{\Gamma_R}.
\end{align*}
Here the Calderon operator $ \mathscr{T} $ is split into
\begin{align*}
	&\mathscr{T}_A[\bm{\nu}\times\bm{u}] \\
	=&\sum_{m=0}^{\infty}\sum_{n=-\infty}^{\infty}\frac{\beta_mT_{\theta,nm}-\alpha_nT_{z,nm}}{|\alpha_n|^2+\beta_m^2}\left(\beta_m\bm{S}_{\theta,nm}+\alpha_n\bm{C}_{z,nm}\right), \\
	&\mathscr{T}_B[\bm{\nu}\times\bm{u}] \\
	=&\left\{
    \begin{aligned}
        &\sum_{m=0}^{\infty}\sum_{n=-\infty}^{\infty}|\alpha_n|^{-1}\left(\beta_mu_{\theta,c,nm}-\alpha_nu_{z,c,nm}\right)(\beta_m\bm{S}_{\theta,nm}+\alpha_n\bm{C}_{z,nm}) \quad m<k\pi/Z, \\
	&\sum_{m=0}^{\infty}\sum_{n=-\infty}^{\infty}-\frac{H_n^{(1)}(k_mR)}{k_mH_n^{(1)\prime}(k_mR)}\left(\beta_mu_{\theta,c,nm}-\alpha_nu_{z,c,nm}\right)(\beta_m\bm{S}_{\theta,nm}+\alpha_n\bm{C}_{z,nm}) \\
    &\quad m>k\pi/Z,
    \end{aligned}
    \right. \\
	&\mathscr{T}_C[\bm{\nu}\times\bm{u}] \\
	=&\sum_{m=0}^{\infty}\sum_{n=-\infty}^{\infty}\frac{-\alpha_nT_{\theta,nm}+\beta_mT_{z,nm}}{|\alpha_n|^2+\beta_m^2}\left(\alpha_n\bm{S}_{\theta,s,nm}+\beta_m\bm{C}_{z,nm}\right),
\end{align*}
where we rewrite $ \mathscr{T}[\bm{\nu}\times\bm{u}]=\sum_{m=0}^{\infty}\sum_{n=-\infty}^{\infty}(T_{\theta,nm}\bm{S}_{\theta,nm}+T_{z,nm}\bm{C}_{z,nm}) $ and denote $ \alpha_n=\mathrm{i}n/R $ and $ \beta_m=m\pi/Z $. In particular, $\nabla_{\Gamma_R}\cdot\mathscr{T}_A[\bm{\nu}\times\bm{u}]=\nabla_{\Gamma_R}\cdot\mathscr{T}_B[\bm{\nu}\times\bm{u}]=0$ and $\nabla_{\Gamma_R}\times\mathscr{T}_C[\bm{\nu}\times\bm{u}]=0$ for any $\bm{\nu}\times\bm{u}\in \bm{H}^{-1/2}(\mathrm{Div},\Gamma_R)$. Since it follows from \cite{MR2988887,MR2723248} that $ \mathrm{Re}[H_{n}^{(1)}(k_mR)/(k_mH_n^{(1)\prime}(k_mR))]\leq0 $, which implies $ \mathrm{Re}\langle\mathscr{T}_B[\bm{\nu\times\bm{u}}],\bm{u}_T\rangle_{\Gamma_R}\geq0 $ and $ \mathrm{Im}\langle\mathscr{T}_B[\bm{\nu\times\bm{u}}],\bm{u}_T\rangle_{\Gamma_R}=0 $, the coercivity of $ a_1(\cdot,\cdot) $ results. Next we prove that $ a_2(\cdot,\cdot) $ is a compact form. Noting that 
\begin{align*}
	&\beta_mT_{\theta,nm}-\alpha_nT_{z,nm} \\
	&=\frac{H_n^{(1)}(k_mR)}{k_mH_n^{(1)\prime}(k_mR)}\left(\alpha_n^2-\beta_m^2\right)\left(\beta_mu_{\theta,c,nm}-\alpha_nu_{z,c,nm}\right)+k^2\frac{H_n^{(1)}(k_mR)}{k_mH_n^{(1)\prime}(k_mR)}\beta_mu_{\theta,nm} \\
	&\quad +k^2\left(\frac{H_n^{(1)}(k_mR)}{k_mH_n^{(1)\prime}(k_mR)}\frac{\alpha_n^2}{k_m^2}+\frac{H_n^{(1)\prime}(k_mR)}{k_mH_n^{(1)}(k_mR)}\right)\alpha_nu_{z,c,nm}.
\end{align*}
By an analogous argument in Lemma \ref{continuity of T}, it gives that
\begin{align*}
	\left|\frac{H_n^{(1)}(k_mR)}{k_mH_n^{(1)\prime}(k_mR)}\frac{\alpha_n^2}{k_m^2}+\frac{H_n^{(1)\prime}(k_mR)}{k_mH_n^{(1)}(k_mR)}\right|
	\leq C(1+|n|^2+|m|^2)^{-1/2},
\end{align*}
which implies that
\begin{align*}
	\|(\mathscr{T}_A-\mathscr{T}_B)[\bm{\nu}\times\bm{u}]\|_{\bm{H}_t^{1/2}(\Gamma_R)}
	\leq C\|\bm{\nu}\times\bm{u}\|_{\bm{H}^{-1/2}(\mathrm{Div},\Gamma_R)}.
\end{align*}
Thus, $\mathscr{T}_A-\mathscr{T}_B:\bm{H}^{-1/2}(\mathrm{Div},\Gamma_R)\to\bm{H}_t^{1/2}(\Gamma_R) $ is well-defined, which implies that $\mathscr{T}_A-\mathscr{T}_B$ is compact from $ \bm{H}^{-1/2}(\mathrm{Div},\Gamma_R) $ into itself. The compactness of $(\mathscr{T}_A-\mathscr{T}_B)\circ\gamma_t $ from $ \boldsymbol{X}_0 $ to $ \boldsymbol{H}^{-1/2}(\mathrm{Div},\Gamma_R) $ is concluded by the trace theorem. From the definition of norm on $ \boldsymbol{H}^{-1/2}(\mathrm{Div},\Gamma_R) $ and the trace theorem of $ \bm{H}(\mathrm{Div},\Omega_R) $, we have for $ \boldsymbol{u}\in\boldsymbol{X}_0 $ that
\begin{align*}
	\|(\mathscr{T}_C\circ\gamma_t)[\boldsymbol{u}]\|_{\boldsymbol{H}_t^{-1/2}(\mathrm{Div},\Gamma_R)}
	&=\|\mathscr{T}_C[\boldsymbol{\nu}\times\boldsymbol{u}]\|_{\boldsymbol{H}^{-1/2}(\mathrm{Div},\Gamma_R)} \\
	&\leq C\|\nabla_{\Gamma_R}\cdot\mathscr{T}[\boldsymbol{\nu}\times\boldsymbol{u}]\|_{H^{-1/2}(\Gamma_R)} \\
	&= Ck^2\|\boldsymbol{\nu}\cdot\boldsymbol{u}\|_{H^{-1/2}(\Gamma_R)} \\
	&\leq C\left(\|\boldsymbol{u}\|_{\boldsymbol{L}^2(\Omega_R)}^2+\|\nabla\cdot\boldsymbol{u}\|_{L^2(\Omega_R)}^2\right)^{1/2} \\
	&= C\|\boldsymbol{u}\|_{\boldsymbol{L}^2(\Omega_R)},
\end{align*}
which gives that $ \mathscr{T}_C\circ\gamma_t:\bm{X}_0\to\bm{H}^{-1/2}(\mathrm{Div},\Gamma_R) $ is bounded. The compactness of $ \boldsymbol{X}_0 $ in $ \boldsymbol{L}^2(\Omega_R) $ yields the assertion. Define an operator$ \mathscr{K}:\boldsymbol{X}_0\to\boldsymbol{X}_0 $ by
\begin{equation*}
	a_2(\boldsymbol{u},\boldsymbol{v})=a_1(\mathscr{K}[\boldsymbol{u}],\boldsymbol{v}) \quad \forall \boldsymbol{u},\boldsymbol{v}\in\boldsymbol{X}_0.
\end{equation*}
By the compact injection $ \boldsymbol{X}_0\hookrightarrow\hookrightarrow\boldsymbol{L}^2(\Omega_R) $ and the compactness of operator $ (\mathscr{T}_A-\mathscr{T}_B+\mathscr{T}_C)\circ\gamma_t $, we have that $ \mathscr{K} $ is a compact operator.

Rewrite the variational problem (\ref{variational problem for u}) into an equivalent operator equation
\begin{equation}\label{operator equation}
	\boldsymbol{u}+\mathscr{K}[\boldsymbol{u}]=\boldsymbol{\psi},
\end{equation}
where $ \boldsymbol{\psi}\in\boldsymbol{X}_0 $ is the unique solution to the problem
\begin{equation*}
	a_1(\boldsymbol{\psi},\boldsymbol{v})=b(\boldsymbol{v}) \quad \forall \boldsymbol{v}\in\boldsymbol{X}_0.
\end{equation*}
Since $ \mathscr{K}:\boldsymbol{X}_0\to\boldsymbol{X}_0 $ is a compact operator, (\ref{operator equation}) is a Fredholm equation of the second kind. By Lemma \ref{uniqueness result} and the Fredholm alternative theorem, there exists a unique solution $ \boldsymbol{u}\in\boldsymbol{X}_0 $ to the variational problem (\ref{variational problem for u}). Let $ \mathscr{I} $ denote the identity operator on $ \boldsymbol{X}_0 $. We conclude that $ (\mathscr{I}+\mathscr{K})^{-1}:\boldsymbol{X}_0\to\boldsymbol{X}_0 $ exists and is a continuous operator, such that $ \boldsymbol{u}=(\mathscr{I}+\mathscr{K})^{-1}\boldsymbol{\psi} $.

Since the variational problem (\ref{variational problem}) is well-posed, we have from the general theory in \cite{MR0421106} that there exists a constant $ C_{\mathrm{inf}}>0 $ such that the inf-sup condition
\begin{equation*}
	\sup_{\mathbf{0}\neq\boldsymbol{V}\in\boldsymbol{X}}\frac{|a(\boldsymbol{E},\boldsymbol{V})|}{\|\boldsymbol{V}\|_{\boldsymbol{X}}}
	\geq C_{\mathrm{inf}}\|\boldsymbol{E}\|_{\boldsymbol{X}}, \quad \forall \boldsymbol{E}\in\boldsymbol{X}
\end{equation*}
holds. The proof is complete.
\end{proof}

\section{Uniqueness result of the inverse obstacle problem}\label{sec 4}
In this section, we turn to study the inverse problem of determining the obstacle $ D $ form the measured scattered field $ \boldsymbol{E} $ on the boundary $ \Gamma_R $ with 
\begin{equation*}
	\boldsymbol{f}
	=-\mathscr{B}[\boldsymbol{E}^\mathrm{inc}]
	:=-\boldsymbol{\nu}\times\nabla\times\boldsymbol{E}^\mathrm{inc}(\boldsymbol{x};\boldsymbol{y},\boldsymbol{p})+\mathrm{i}\eta\boldsymbol{E}_T^\mathrm{inc}(\boldsymbol{x};\boldsymbol{y},\boldsymbol{p}),
\end{equation*}
where $ \boldsymbol{y}\in\Gamma_{R^\prime} $ with $ R \geq R^\prime $. In particular, the incident wave $\boldsymbol{E}^\mathrm{inc}$ is set to be a point source at $ \boldsymbol{y}\in\Omega $ with polarization $ \boldsymbol{p}\neq\mathbf{0} $ is given by
\begin{equation}
	\boldsymbol{E}^\mathrm{inc}(\boldsymbol{x};\boldsymbol{y},\boldsymbol{p})=\mathbb{G}(\boldsymbol{x},\boldsymbol{y})\boldsymbol{p},
\end{equation}
where the $ \mathbb{G} $ denotes the dyadic Green's function satisfying
\begin{equation}
	\left\{
	\begin{aligned}
		&\nabla_{\boldsymbol{x}}\times\nabla_{\boldsymbol{x}}\times\mathbb{G}-k^2\mathbb{G}=\delta_{\boldsymbol{y}}(\boldsymbol{x})\mathbb{I} \quad && \text{in}~\R_Z^3, \\
		&\boldsymbol{\nu}\times\mathbb{G}=\mathbb{O} \quad && \text{on}~\Gamma^\pm,
	\end{aligned}
	\right.
\end{equation}
together with the radiation condition (\ref{radiation1})-(\ref{radiation2}). Here, $ \mathbb{I} $ is the $3\times 3$ identity matrix, $ \mathbb{O} $ is the $3\times 3$ zero matrix, and $\boldsymbol{a}\times\mathbb{A}$, $ \nabla\times\mathbb{A} $ denote the matrices whose $ l $-th columns are given by $ \boldsymbol{a}\times\boldsymbol{a}_l $ and $ \nabla\times\boldsymbol{a}_l $, respectively, for any vector function $ \boldsymbol{a} $ and any matrix function $ \mathbb{A} $ with columns $ \boldsymbol{a}_l $.

In order to prove the uniqueness result for the inverse problem, we start by stating two lemmas. The first gives a representation formula of the solution.
\begin{lemma}[Green's representation formula]\label{Green's representation formula}
Let $ \E $ be the unique solution to the problem (\ref{model:E}). Then
\begin{equation*}
	\E(\boldsymbol{x})
	=\int_{\partial D}\mathbb{G}^\top(\boldsymbol{x},\boldsymbol{y})\cdot\gamma_t(\nabla\times\E(\boldsymbol{y}))+(\nabla_{\boldsymbol{y}}\times\mathbb{G}(\boldsymbol{x},\boldsymbol{y}))^\top\cdot\gamma_t\E(\boldsymbol{y})\mathrm{d}s(\boldsymbol{y})
\end{equation*}
\end{lemma}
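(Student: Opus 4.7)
The plan is to derive the formula via the vector Green's second identity applied on a truncated domain, combined with the usual small-ball extraction argument around the evaluation point $\bm x$, then pass to the limit by using the boundary conditions satisfied by both $\bm E$ and $\mathbb G$ together with the radiation condition. First I would fix $\bm x\in\Omega$ and a vector $\bm q\in\mathbb R^3$, and set $\bm v(\bm y)=\mathbb G(\bm y,\bm x)^\top \bm q$ so that $\nabla_{\bm y}\times\nabla_{\bm y}\times\bm v-k^2\bm v=\delta_{\bm x}(\bm y)\bm q$ in $\mathbb R^3_Z$ with $\bm\nu\times\bm v=\bm 0$ on $\Gamma^\pm$. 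I would then take $\bm u=\bm E$ and apply the vector Green's identity
\begin{equation*}
\int_{\mathcal O}\bigl(\bm v\cdot\nabla\times\nabla\times\bm u-\bm u\cdot\nabla\times\nabla\times\bm v\bigr)\,d\bm y=\int_{\partial\mathcal O}\bigl(\bm u\times\nabla\times\bm v-\bm v\times\nabla\times\bm u\bigr)\cdot\bm\nu\,ds
\end{equation*}
on the domain $\mathcal O_{R,\varepsilon}=\bigl(C_R\setminus\overline D\bigr)\setminus\overline{B_\varepsilon(\bm x)}$, with $R$ large and $\varepsilon$ small. Because $\bm E$ solves the homogeneous curl–curl equation and $\bm v$ solves it away from $\bm x$, the left-hand side vanishes identically.

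Next I would analyse the four pieces of $\partial\mathcal O_{R,\varepsilon}=\Sigma_R\cup\Gamma_R\cup\partial D\cup\partial B_\varepsilon(\bm x)$. On $\Sigma_R=\Gamma_R^+\cup\Gamma_R^-$ both $\bm\nu\times\bm E$ and $\bm\nu\times\bm v$ vanish, so using the tangential-trace identity $(\bm u\times\nabla\times\bm v)\cdot\bm\nu=(\bm\nu\times\bm u)\cdot(\nabla\times\bm v)$ these contributions are zero. On $\partial B_\varepsilon(\bm x)$ the strong singularity of $\mathbb G(\bm y,\bm x)$ of order $|\bm y-\bm x|^{-1}$ produces, in the limit $\varepsilon\to 0$, exactly $-\bm q\cdot\bm E(\bm x)$; this is the standard computation, using that the regular part of $\mathbb G$ contributes nothing and the free-space singular part behaves as the classical Green tensor. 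On $\partial D$, with the outward normal (from $\Omega$) taken into account, the boundary integral reassembles, after rearranging $(\bm u\times\nabla\times\bm v)\cdot\bm\nu=(\bm\nu\times\bm u)\cdot(\nabla\times\bm v)=(\nabla\times\bm v)\cdot(\bm\nu\times\bm u)$, into the two pieces
\begin{equation*}
\bm q\cdot\int_{\partial D}\bigl[\mathbb G^\top(\bm x,\bm y)\,\gamma_t(\nabla\times\bm E(\bm y))+(\nabla_{\bm y}\times\mathbb G(\bm x,\bm y))^\top\,\gamma_t\bm E(\bm y)\bigr]\,ds(\bm y),
\end{equation*}
after using the reciprocity $\mathbb G(\bm y,\bm x)^\top=\mathbb G(\bm x,\bm y)$ that follows from the symmetry of the boundary value problem defining $\mathbb G$. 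Since $\bm q$ is arbitrary, this yields the claimed formula modulo the vanishing of the $\Gamma_R$ contribution.

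The main obstacle is the contribution on $\Gamma_R$, which has to be shown to tend to zero as $R\to\infty$. I would handle it by exploiting the series representations (\ref{series expansion of Er})--(\ref{series expansion of Ez}) for both $\bm E$ and each column of $\mathbb G(\cdot,\bm x)$, since both satisfy the radiation condition (\ref{radiation1})--(\ref{radiation2}) with the same wavenumbers $k_m$. For the evanescent modes $k<m\pi/Z$ the decay $O(e^{-|k_m|R})$ combined with the polynomial growth $R$ of the surface measure is immediately enough. For the propagating modes, the integrand is
\begin{equation*}
(\bm E\times\nabla\times\bm v-\bm v\times\nabla\times\bm E)\cdot\bm\nu,
\end{equation*}
and expressing $\partial_r\bm E_{nm}=\mathrm ik_m\bm E_{nm}+o(r^{-1/2})$ (and analogously for the columns of $\mathbb G$) inside the azimuthal/axial Parseval identity leads to mode-by-mode cancellation of the leading $O(R^{-1})$ terms, leaving an $o(R^{-1})\cdot R=o(1)$ surface integral; the switch of limit and summation is justified by the uniform bounds on Hankel ratios already established in the proof of Lemma \ref{continuity of T}. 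Passing $R\to\infty$ and $\varepsilon\to 0$ in turn, and invoking the arbitrariness of $\bm q$, yields the claimed Green's representation formula.
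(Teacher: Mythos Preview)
Your argument is the standard Stratton--Chu derivation and is precisely what the paper has in mind: the paper gives no self-contained proof but simply refers to \cite[Theorem~12.2]{MR2059447} and \cite[Lemma~3.3.2]{MR3439069}, both of which proceed via the vector Green identity on a truncated punctured domain, the small-ball extraction at $\bm x$, the vanishing of the $\Sigma_R$ contributions from the PEC condition shared by $\bm E$ and $\mathbb G$, and the vanishing of the $\Gamma_R$ contribution from the common radiation behaviour (the paper carries out exactly this last computation for two radiating fields in the proof of Lemma~\ref{Reciprocity relation}, identity~(\ref{RR4})).

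One small correction: take $\bm v(\bm y)=\mathbb G(\bm y,\bm x)\bm q$ rather than $\mathbb G(\bm y,\bm x)^\top\bm q$. By the paper's convention the curl--curl operator, the PEC condition and the radiation condition act column-wise on $\mathbb G$, so it is the columns of $\mathbb G(\bm y,\bm x)$---not its rows---that are known a~priori to solve the equation and radiate in $\bm y$. The dyadic reciprocity $\mathbb G(\bm y,\bm x)=\mathbb G(\bm x,\bm y)^\top$ (itself a consequence of a Green's-identity argument and hence logically prior to this lemma) is then applied only at the final rewriting step to put the $\partial D$ integral in the stated form.
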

\begin{proof}
The proof follows analogous to \cite[Theorem 12.2]{MR2059447} or \cite[Lemma 3.3.2]{MR3439069}.
\end{proof}

The next result states a reciprocity relation for the considered problem.
\begin{lemma}[Reciprocity relation]\label{Reciprocity relation}
	Denote by $ \E(\boldsymbol{x};\boldsymbol{y},\boldsymbol{p}) $ the solution to the scattering problem (\ref{model:E}) with \begin{equation*}
		\boldsymbol{f}
		=-\mathscr{B}[\mathbb{G}(\boldsymbol{x},\boldsymbol{y})\boldsymbol{p}].
	\end{equation*}
For any $ \boldsymbol{x},\boldsymbol{z}\in\Omega $ and polarizations $ \boldsymbol{p},\boldsymbol{q} $, it holds that
\begin{equation*}
	\boldsymbol{q}\cdot\E(\boldsymbol{x};\boldsymbol{z},\boldsymbol{p})
	=\boldsymbol{p}\cdot\E(\boldsymbol{z};\boldsymbol{x},\boldsymbol{p}).
\end{equation*}
\end{lemma}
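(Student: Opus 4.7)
The plan is to apply the vector Green's second identity to a pair of total fields and extract the source contributions by shrinking small exclusion balls around the singularities. Set $\boldsymbol{E}_1^{\mathrm{tot}} := \boldsymbol{E}^{\mathrm{inc}}(\cdot;\boldsymbol{z},\boldsymbol{p}) + \boldsymbol{E}(\cdot;\boldsymbol{z},\boldsymbol{p})$ and $\boldsymbol{E}_2^{\mathrm{tot}} := \boldsymbol{E}^{\mathrm{inc}}(\cdot;\boldsymbol{x},\boldsymbol{q}) + \boldsymbol{E}(\cdot;\boldsymbol{x},\boldsymbol{q})$. The choice $\boldsymbol{f} = -\mathscr{B}[\boldsymbol{E}^{\mathrm{inc}}]$ is tailored precisely so that each $\boldsymbol{E}_j^{\mathrm{tot}}$ satisfies the homogeneous impedance condition $\boldsymbol{\nu}\times\nabla\times\boldsymbol{E}_j^{\mathrm{tot}} - \mathrm{i}\eta\,\boldsymbol{E}_{j,T}^{\mathrm{tot}} = \boldsymbol{0}$ on $\partial D$, the PEC condition on $\Gamma^\pm$, the radiation condition (\ref{radiation1})--(\ref{radiation2}), and $\nabla\times\nabla\times\boldsymbol{E}_j^{\mathrm{tot}} - k^2\boldsymbol{E}_j^{\mathrm{tot}} = \boldsymbol{0}$ away from its source point.

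Applying Green's second vector identity on $(\Omega\cap B_R)$ with small balls of radius $\epsilon$ excised around $\boldsymbol{z}$ and $\boldsymbol{x}$ kills the volume term and leaves the antisymmetric surface pairing $(\boldsymbol{\nu}\times\nabla\times\boldsymbol{E}_1^{\mathrm{tot}})\cdot\boldsymbol{E}_2^{\mathrm{tot}} - (\boldsymbol{\nu}\times\nabla\times\boldsymbol{E}_2^{\mathrm{tot}})\cdot\boldsymbol{E}_1^{\mathrm{tot}}$ integrated over each boundary component. On $\partial D$, substituting the impedance condition collapses the integrand to $\mathrm{i}\eta\,(\boldsymbol{E}_{1,T}^{\mathrm{tot}}\cdot\boldsymbol{E}_{2,T}^{\mathrm{tot}} - \boldsymbol{E}_{2,T}^{\mathrm{tot}}\cdot\boldsymbol{E}_{1,T}^{\mathrm{tot}}) = 0$. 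On $\Gamma^\pm$, $\boldsymbol{E}_{j,T}^{\mathrm{tot}} = \boldsymbol{0}$ by PEC and $\boldsymbol{\nu}\times\nabla\times\boldsymbol{E}_j^{\mathrm{tot}}$ is tangential, so both terms vanish. Around each source, the singular behaviour encoded in $\nabla\times\nabla\times\mathbb{G}(\cdot,\boldsymbol{y})-k^2\mathbb{G}(\cdot,\boldsymbol{y}) = \delta_{\boldsymbol{y}}\mathbb{I}$ together with a standard $\epsilon\to 0^+$ calculation extracts the point values $\boldsymbol{p}\cdot\boldsymbol{E}_2^{\mathrm{tot}}(\boldsymbol{z})$ and $-\boldsymbol{q}\cdot\boldsymbol{E}_1^{\mathrm{tot}}(\boldsymbol{x})$. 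The integral on $\Gamma_R$ is shown to vanish as $R\to\infty$ by inserting the mode expansion (\ref{series expansion of Er})--(\ref{series expansion of Ez}) and using Hankel-function asymptotics: propagating modes cancel mode-by-mode through the Sommerfeld-type orthogonality built into (\ref{radiation1}), while evanescent modes contribute $O(e^{-|k_m|R})$ by (\ref{radiation2}).

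Together these steps deliver the total-field reciprocity $\boldsymbol{q}\cdot\boldsymbol{E}_1^{\mathrm{tot}}(\boldsymbol{x}) = \boldsymbol{p}\cdot\boldsymbol{E}_2^{\mathrm{tot}}(\boldsymbol{z})$. Running the identical argument for the pair $\mathbb{G}(\cdot,\boldsymbol{x})\boldsymbol{q}$ and $\mathbb{G}(\cdot,\boldsymbol{z})\boldsymbol{p}$ in the obstacle-free waveguide $\R_Z^3$ (where the $\partial D$ contribution is absent and all other steps are identical) yields the dyadic symmetry $\mathbb{G}(\boldsymbol{x},\boldsymbol{z}) = \mathbb{G}^\top(\boldsymbol{z},\boldsymbol{x})$, equivalently $\boldsymbol{q}\cdot\boldsymbol{E}^{\mathrm{inc}}(\boldsymbol{x};\boldsymbol{z},\boldsymbol{p}) = \boldsymbol{p}\cdot\boldsymbol{E}^{\mathrm{inc}}(\boldsymbol{z};\boldsymbol{x},\boldsymbol{q})$. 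Subtracting this from the total-field identity isolates the scattered-field reciprocity asserted in the lemma.

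The principal obstacle I anticipate is the rigorous control of the $\Gamma_R$ term: the cross-coupling between the propagating coefficients $A_{nm}^\pm$ and $B_{nm}$ in (\ref{series expansion of Er})--(\ref{series expansion of Ez}) mixes different Hankel orders and polarization directions, so mode-by-mode cancellation needs the Hankel Wronskian together with the orthogonality of the $z$- and $\theta$-eigenmodes to kill all cross terms. A cleaner alternative I would also pursue is to fix $R$ large enough that $\boldsymbol{x},\boldsymbol{z}\in C_R$, replace the $\Gamma_R$ integral by $\langle\mathscr{T}[\boldsymbol{\nu}\times\boldsymbol{E}_1^{\mathrm{tot}}],\boldsymbol{E}_{2,T}^{\mathrm{tot}}\rangle_{\Gamma_R}$ (both total fields are outgoing solutions in the exterior of $C_R$), and verify symmetry of this Calder\'on pairing in its two arguments; by the explicit form of $W_{nm}$ in Section~\ref{sec:3.1} this reduces to the already-observed identity $W_{nm}^{12} = W_{nm}^{21}$, so the $\Gamma_R$ contribution cancels without any limiting argument.
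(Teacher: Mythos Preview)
Your argument is correct but follows a different route from the paper. The paper does not excise $\epsilon$-balls around the sources; instead it starts from the Green's representation formula (Lemma~\ref{Green's representation formula}), writes both $\boldsymbol{q}\cdot\boldsymbol{E}(\boldsymbol{x};\boldsymbol{z},\boldsymbol{p})$ and $\boldsymbol{p}\cdot\boldsymbol{E}(\boldsymbol{z};\boldsymbol{x},\boldsymbol{q})$ as integrals over $\partial D$, proves two auxiliary vanishing identities---one for the pair $(\mathbb{G}(\cdot,\boldsymbol{z})\boldsymbol{p},\mathbb{G}(\cdot,\boldsymbol{x})\boldsymbol{q})$ on $\partial D$ via Green's identity in $D$, one for the pair of scattered fields via the $\Gamma_R\to\infty$ limit---and then algebraically rearranges the difference into a single $\partial D$ integral involving only total fields, which the impedance condition kills. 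Your approach is more direct and self-contained: one application of Green's identity on the punctured exterior, total-field reciprocity, then subtraction of the obstacle-free $\mathbb{G}$-symmetry. The paper's route buys you the avoidance of the $\epsilon\to0$ extraction (already absorbed into the representation lemma) and of a separate $\mathbb{G}^\top$-symmetry step; your route buys independence from Lemma~\ref{Green's representation formula}. Both need the same $\Gamma_R$ cancellation, and the paper handles it exactly as you anticipate, via the mode expansion.

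One small caution on your Calder\'on alternative: the pairing $\langle\cdot,\cdot\rangle_{\Gamma_R}$ defined in the paper is sesquilinear, whereas the reciprocity integrand has no complex conjugate. The relevant bilinear pairing couples the Fourier mode $n$ of one field with the mode $-n$ of the other, so the cancellation does not reduce literally to $W_{nm}^{12}=W_{nm}^{21}$; you also need the parity relations $W_{-n,m}^{11}=W_{nm}^{11}$, $W_{-n,m}^{22}=W_{nm}^{22}$, $W_{-n,m}^{12}=-W_{nm}^{12}$ (all immediate from $H_{-n}^{(1)}=(-1)^nH_n^{(1)}$) together with the sign pattern coming from $\boldsymbol{\nu}\times\boldsymbol{U}$ versus $\boldsymbol{U}_T$. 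The idea is sound, but the bookkeeping is a bit more than you indicate.
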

\begin{proof}
	By using the representation formula in Lemma \ref{Green's representation formula} and identities \cite[Appendix B]{MR3439069}, we obtain
	\begin{equation*}
		\boldsymbol{E}(\boldsymbol{x};\boldsymbol{z},\boldsymbol{p})
		=\int_{\partial D}-\boldsymbol{E}(\boldsymbol{y};\boldsymbol{z},\boldsymbol{p})\cdot\gamma_t(\nabla_{\boldsymbol{y}}\times\mathbb{G}(\boldsymbol{x},\boldsymbol{y}))+\gamma_t(\nabla\times\boldsymbol{E}(\boldsymbol{y};\boldsymbol{z},\boldsymbol{p}))\cdot\mathbb{G}(\boldsymbol{x},\boldsymbol{y})\mathrm{d}s(\boldsymbol{y}).
	\end{equation*}
which gives
\begin{equation}
\label{RR1}
	\boldsymbol{q}\cdot\boldsymbol{E}(\boldsymbol{x};\boldsymbol{z},\boldsymbol{p})
	=\int_{\partial D}-\boldsymbol{E}(\boldsymbol{y};\boldsymbol{z},\boldsymbol{p})\cdot\gamma_t(\nabla_{\boldsymbol{y}}\times\mathbb{G}(\boldsymbol{x},\boldsymbol{y})\boldsymbol{q})+\gamma_t(\nabla\times\boldsymbol{E}(\boldsymbol{y};\boldsymbol{z},\boldsymbol{p}))\cdot\mathbb{G}(\boldsymbol{x},\boldsymbol{y})\boldsymbol{q}\mathrm{d}s(\boldsymbol{y}).
\end{equation}
Analogously, 
\begin{equation}
\label{RR2}
	\boldsymbol{p}\cdot\boldsymbol{E}(\boldsymbol{z};\boldsymbol{x},\boldsymbol{q})
	=\int_{\partial D}-\boldsymbol{E}(\boldsymbol{y};\boldsymbol{x},\boldsymbol{q})\cdot\gamma_t(\nabla_{\boldsymbol{y}}\times\mathbb{G}(\boldsymbol{z},\boldsymbol{y})\boldsymbol{p})+\gamma_t(\nabla\times\boldsymbol{E}(\boldsymbol{y};\boldsymbol{x},\boldsymbol{q}))\cdot\mathbb{G}(\boldsymbol{z},\boldsymbol{y})\boldsymbol{p}\mathrm{d}s(\boldsymbol{y}).
\end{equation}

By Green's second identity, we have for $ \boldsymbol{x},\boldsymbol{z}\in\Omega $ that
\begin{align}
\label{RR3}
	&\int_{\partial D}-\mathbb{G}(\boldsymbol{y},\boldsymbol{z})\boldsymbol{p}\cdot(\boldsymbol{\nu}\times\nabla_{\boldsymbol{y}}\times\mathbb{G}(\boldsymbol{x},\boldsymbol{y})\boldsymbol{q})+(\boldsymbol{\nu}\times\nabla\times\mathbb{G}(\boldsymbol{y},\boldsymbol{z})\boldsymbol{p})\cdot\mathbb{G}(\boldsymbol{x},\boldsymbol{y})\boldsymbol{q}\mathrm{d}s(\boldsymbol{y}) \nonumber\\
	=&\,\int_{\partial D}\boldsymbol{\nu}\cdot\left(\mathbb{G}(\boldsymbol{y},\boldsymbol{z})\boldsymbol{p}\times\nabla_{\boldsymbol{y}}\times\mathbb{G}(\boldsymbol{x},\boldsymbol{y})\boldsymbol{q}+(\nabla\times\mathbb{G}(\boldsymbol{y},\boldsymbol{z})\boldsymbol{p})\times\mathbb{G}(\boldsymbol{x},\boldsymbol{y})\boldsymbol{q}\right)\mathrm{d}s(\boldsymbol{y}) \nonumber\\
	=&\,\int_{D}(\nabla\times\nabla\times\mathbb{G}(\boldsymbol{y},\boldsymbol{z})\boldsymbol{p})\cdot\mathbb{G}(\boldsymbol{x},\boldsymbol{y})\boldsymbol{q}-\mathbb{G}(\boldsymbol{y},\boldsymbol{z})\boldsymbol{p}\cdot(\nabla_{\boldsymbol{y}}\times\nabla_{\boldsymbol{y}}\times\mathbb{G}(\boldsymbol{x},\boldsymbol{y})\boldsymbol{q})\mathrm{d}\boldsymbol{y} \nonumber\\
	=&\,\int_{D}(\nabla\times\nabla\times\mathbb{G}(\boldsymbol{y},\boldsymbol{z})\boldsymbol{p}-k^2\mathbb{G}(\boldsymbol{y},\boldsymbol{z})\boldsymbol{p})\cdot\mathbb{G}(\boldsymbol{x},\boldsymbol{y})\boldsymbol{q}\mathrm{d}\boldsymbol{y} \nonumber\\
	&-\int_{D}\mathbb{G}(\boldsymbol{y},\boldsymbol{z})\boldsymbol{p}\cdot(\nabla_{\boldsymbol{y}}\times\nabla_{\boldsymbol{y}}\times\mathbb{G}(\boldsymbol{x},\boldsymbol{y})\boldsymbol{q}-k^2\mathbb{G}(\boldsymbol{x},\boldsymbol{y})\boldsymbol{q})\mathrm{d}\boldsymbol{y} \nonumber\\
	=&\,0.
\end{align}
Using the Green's formula over $ \Omega_R\backslash\overline{D} $ and series expansions (\ref{series expansion of Er})-(\ref{series expansion of Ez}), we can also get
\begin{equation}\label{RR4}
	\begin{aligned}
		&\int_{\partial D}-\boldsymbol{E}(\boldsymbol{y};\boldsymbol{z},\boldsymbol{p})\cdot\gamma_t(\nabla\times\boldsymbol{E}(\boldsymbol{y};\boldsymbol{x},\boldsymbol{q}))+\gamma_t(\nabla\times\boldsymbol{E}(\boldsymbol{y};\boldsymbol{z},\boldsymbol{p}))\cdot\boldsymbol{E}(\boldsymbol{y};\boldsymbol{x},\boldsymbol{q})\mathrm{d}s(\boldsymbol{y}) \\
		&=\lim_{R\to\infty}\int_{\Gamma_R}-\boldsymbol{E}(\boldsymbol{y};\boldsymbol{z},\boldsymbol{p})\cdot\gamma_t(\nabla\times\boldsymbol{E}(\boldsymbol{y};\boldsymbol{x},\boldsymbol{q}))+\gamma_t(\nabla\times\boldsymbol{E}(\boldsymbol{y};\boldsymbol{z},\boldsymbol{p}))\cdot\boldsymbol{E}(\boldsymbol{y};\boldsymbol{x},\boldsymbol{q})\mathrm{d}s(\boldsymbol{y}) \\
        &=0.
	\end{aligned}
\end{equation}
It follows from a combination of (\ref{RR1})-(\ref{RR4}) that
\begin{align*}
	&\boldsymbol{q}\cdot\boldsymbol{E}(\boldsymbol{x};\boldsymbol{z},\boldsymbol{p})-\boldsymbol{p}\cdot\boldsymbol{E}(\boldsymbol{z};\boldsymbol{x},\boldsymbol{q}) \\
	=&\,\int_{\partial D}-\boldsymbol{E}(\boldsymbol{y};\boldsymbol{z},\boldsymbol{p})\cdot(\boldsymbol{\nu}\times\nabla_{\boldsymbol{y}}\times\mathbb{G}(\boldsymbol{x},\boldsymbol{y})\boldsymbol{q})+(\boldsymbol{\nu}\times\nabla\times\boldsymbol{E}(\boldsymbol{y};\boldsymbol{z},\boldsymbol{p}))\cdot\mathbb{G}(\boldsymbol{x},\boldsymbol{y})\boldsymbol{q}\mathrm{d}s(\boldsymbol{x}) \\
	&-\int_{\partial D}-\boldsymbol{E}(\boldsymbol{y};\boldsymbol{x},\boldsymbol{q})\cdot(\boldsymbol{\nu}\times\nabla_{\boldsymbol{y}}\times\mathbb{G}(\boldsymbol{z},\boldsymbol{y})\boldsymbol{p})+(\boldsymbol{\nu}\times\nabla\times\boldsymbol{E}(\boldsymbol{y};\boldsymbol{x},\boldsymbol{q}))\cdot\mathbb{G}(\boldsymbol{z},\boldsymbol{y})\boldsymbol{p}\mathrm{d}s(\boldsymbol{y}) \\
	&+\int_{\partial D}-\mathbb{G}(\boldsymbol{y},\boldsymbol{z})\boldsymbol{p}\cdot(\boldsymbol{\nu}\times\nabla_{\boldsymbol{y}}\times\mathbb{G}(\boldsymbol{x},\boldsymbol{y})\boldsymbol{q})+(\boldsymbol{\nu}\times\nabla_{\boldsymbol{y}}\times\mathbb{G}(\boldsymbol{y},\boldsymbol{z})\boldsymbol{p})\cdot\mathbb{G}(\boldsymbol{x},\boldsymbol{y})\boldsymbol{q}\mathrm{d}s(\boldsymbol{y}) \\
	&+\int_{\partial D}-\boldsymbol{E}(\boldsymbol{y};\boldsymbol{z},\boldsymbol{p})\cdot(\boldsymbol{\nu}\times\nabla\times\boldsymbol{E}(\boldsymbol{y};\boldsymbol{x},\boldsymbol{q}))+(\boldsymbol{\nu}\times\nabla\times\boldsymbol{E}(\boldsymbol{y};\boldsymbol{z},\boldsymbol{p}))\cdot\boldsymbol{E}(\boldsymbol{y};\boldsymbol{x},\boldsymbol{q})\mathrm{d}s(\boldsymbol{y}) \\
	=&\int_{\partial D}-(\mathbb{G}(\boldsymbol{y},\boldsymbol{z})\boldsymbol{p}+\boldsymbol{E}(\boldsymbol{y};\boldsymbol{z},\boldsymbol{p}))\cdot(\boldsymbol{\nu}\times\nabla\times(\mathbb{G}(\boldsymbol{x},\boldsymbol{y})\boldsymbol{q}+\boldsymbol{E}(\boldsymbol{y};\boldsymbol{x},\boldsymbol{q}))\mathrm{d}s(\boldsymbol{y}) \\
	&+\int_{\partial D}(\mathbb{G}(\boldsymbol{x},\boldsymbol{y})\boldsymbol{q}+\boldsymbol{E}(\boldsymbol{y};\boldsymbol{x},\boldsymbol{q}))\cdot\left(\boldsymbol{\nu}\times\nabla\times(\mathbb{G}(\boldsymbol{y},\boldsymbol{z})\boldsymbol{p}+\boldsymbol{E}(\boldsymbol{y};\boldsymbol{z},\boldsymbol{p}))\right)\mathrm{d}s(\boldsymbol{y}).
\end{align*}
Then from the impedance boundary condition
\begin{equation*}
	\boldsymbol{\nu}\times\nabla\times(\mathbb{G}(\boldsymbol{x},\boldsymbol{y})\boldsymbol{p}+\boldsymbol{E}(\boldsymbol{y};\boldsymbol{x},\boldsymbol{p}))-\mathrm{i}\eta(\mathbb{G}(\boldsymbol{x},\boldsymbol{y})\boldsymbol{p}+\boldsymbol{E}(\boldsymbol{y};\boldsymbol{x},\boldsymbol{p}))_T
	=\boldsymbol{0},
\end{equation*}
we get
\begin{equation*}
	\boldsymbol{q}\cdot\boldsymbol{E}(\boldsymbol{x};\boldsymbol{z},\boldsymbol{p})
	=\boldsymbol{p}\cdot\boldsymbol{E}(\boldsymbol{z};\boldsymbol{x},\boldsymbol{q}),
\end{equation*}
which completes the proof.
\end{proof}

\begin{theorem}
Let $ D_1 $ and $ D_2 $ be two scatterers. For a fixed wavenumber $ k $, if the tangential components of the electric field $ \boldsymbol{E}_1(\cdot;\boldsymbol{z},\boldsymbol{p}) $ and $ \boldsymbol{E}_2(\cdot;\boldsymbol{z},\boldsymbol{p}) $ on $\Gamma_R$, resulting from the scattering of the point source $\boldsymbol{E}^\mathrm{inc}(\cdot;\boldsymbol{z},\boldsymbol{p})=\mathbb{G}(\cdot,\boldsymbol{z})\boldsymbol{p}$ by $ D_1 $ and $ D_2 $, respectively, coninside for all $ \boldsymbol{z}\in\Gamma_{R^\prime} $ and polarizations $ \boldsymbol{p} $, then $ D_1=D_2 $.
\end{theorem}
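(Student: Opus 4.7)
First I would upgrade the measurement equality on $\Gamma_R$ to a pointwise identity of the two scattered fields throughout the common exterior. Fix $\bm z\in\Gamma_{R'}$ and polarization $\bm p$ and set $\bm w:=\boldsymbol{E}_1(\cdot;\bm z,\bm p)-\boldsymbol{E}_2(\cdot;\bm z,\bm p)$. By hypothesis $\bm\nu\times\bm w=\mathbf{0}$ on $\Gamma_R$, and $\bm w$ is a radiating Maxwell solution in $\mathbb{R}^3_Z\setminus\overline{C_R}$ with the PEC condition on $\Gamma^\pm\setminus\Gamma_R^\pm$ and radiation (\ref{radiation1})--(\ref{radiation2}). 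The exterior--uniqueness argument underlying the Calder\'on operator $\mathscr T$ of Section~\ref{sec 3} then forces $\bm w\equiv\mathbf{0}$ in $\mathbb{R}^3_Z\setminus\overline{C_R}$, after which unique continuation (\cite[Theorem~4.13]{MR2059447}) propagates the vanishing to the entire unbounded connected component $G$ of $\mathbb{R}^3_Z\setminus\overline{D_1\cup D_2}$, since each scattered field is a smooth Maxwell solution there.

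Next I would invoke the reciprocity relation of Lemma~\ref{Reciprocity relation} to enlarge the admissible source positions. From $\bm q\cdot\boldsymbol{E}_j(\bm x;\bm z,\bm p)=\bm p\cdot\boldsymbol{E}_j(\bm z;\bm x,\bm q)$ together with the identity just obtained one reads off
\begin{equation*}
\boldsymbol{E}_1(\bm z;\bm x,\bm q)=\boldsymbol{E}_2(\bm z;\bm x,\bm q)\qquad\forall\,\bm z\in\Gamma_{R'},\ \bm x\in G,\ \text{polarizations }\bm q.
\end{equation*}
Fixing $\bm x\in G$ and $\bm q$ and viewing the difference as a radiating Maxwell solution in the evaluation variable, all three components vanish on $\Gamma_{R'}$; substituting into the Hankel expansion (\ref{series expansion of Er})--(\ref{series expansion of Ez}) evaluated at $r=R'$ and using $H_n^{(1)}(k_m R')\neq 0$ kills every coefficient, so $\bm w^\star\equiv\mathbf{0}$ for $r>R'$, and a further application of unique continuation delivers $\boldsymbol{E}_1(\bm w;\bm x,\bm q)=\boldsymbol{E}_2(\bm w;\bm x,\bm q)$ for all $\bm w,\bm x\in G$ and all $\bm q$. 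In particular the diagonal indicator
\begin{equation*}
I_j(\bm z):=\bm p\cdot\boldsymbol{E}_j(\bm z;\bm z,\bm p),\qquad j=1,2,
\end{equation*}
is well defined on $G$ and satisfies $I_1\equiv I_2$.

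Finally I would close the proof by the probe/needle construction. Assume for contradiction that $D_1\neq D_2$; after relabelling, choose $\bm z^\star\in\partial D_1\setminus\overline{D_2}$ and a non-self-intersecting continuous needle $\gamma:[0,1]\to\overline{G}$ with $\gamma(0)$ far from both obstacles, $\gamma(t)\in G$ for $t\in[0,1)$, and $\gamma(1)=\bm z^\star$, and put $\bm z_n:=\gamma(t_n)$ with $t_n\uparrow 1$. Since $\gamma([0,1])$ lies at positive distance from $\overline{D_2}$, joint continuity of $(\bm x,\bm z)\mapsto\boldsymbol{E}_2(\bm x;\bm z,\bm p)$ on compacts of $(\mathbb{R}_Z^3\setminus\overline{D_2})\times(\mathbb{R}_Z^3\setminus\overline{D_2})$ yields a uniform bound $|I_2(\bm z_n)|\le C$. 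The technical heart of the probe method is the matching divergence $|I_1(\bm z_n)|\to\infty$ along the needle, which I would establish by combining the representation formula of Lemma~\ref{Green's representation formula} for the total field $\boldsymbol{E}_1(\cdot;\bm z_n,\bm p)+\mathbb{G}(\cdot,\bm z_n)\bm p$ with the impedance condition on $\partial D_1$ and the free-space-type singularity $\mathbb{G}(\bm x,\bm y)\sim|\bm x-\bm y|^{-1}$; the waveguide dyadic Green's function differs from the free-space one only by a smoother correction, so the diagonal singularity persists. The blow-up of $I_1(\bm z_n)$ against the boundedness of $I_2(\bm z_n)$ contradicts the identity $I_1\equiv I_2$ on $G$, forcing $D_1=D_2$. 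The principal obstacle is precisely this divergence estimate, the signature input of Ikehata's probe method \cite{MR1853728}; I would adapt the strategy already carried out in the tubular-waveguide EM and elastic analogues \cite{MR3975368,MR2785835} to the present parallel-plate geometry.
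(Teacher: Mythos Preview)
Your proposal follows the same skeleton as the paper's proof --- exterior uniqueness from $\bm\nu\times\bm w=\mathbf{0}$ on $\Gamma_R$, unique continuation into $G$, the reciprocity swap of Lemma~\ref{Reciprocity relation}, and a blow-up contradiction at a point of $\partial D_1\setminus\overline{D_2}$ --- and your explicit second exterior-uniqueness round (from full data on $\Gamma_{R'}$) is in fact more carefully spelled out than in the paper. The genuine difference is in the contradiction quantity. You introduce the diagonal indicator $I_j(\bm z)=\bm p\cdot\bm E_j(\bm z;\bm z,\bm p)$ along a needle and flag the divergence $|I_1(\bm z_n)|\to\infty$ as the ``principal obstacle'' still to be adapted from \cite{MR3975368,MR2785835}. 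The paper sidesteps that estimate entirely: it fixes $\bm x^*\in\partial D_1\setminus\overline{D_2}$, takes the simple normal approach $\bm z_n=\bm x^*+n^{-1}\bm\nu(\bm x^*)\in G$, and compares not the fields themselves but the impedance operator $\mathscr{B}[\bm E_j(\bm x^*;\bm z_n,\bm p)]$ at the fixed boundary point $\bm x^*$. On the $D_1$ side the impedance boundary condition gives directly $\mathscr{B}[\bm E_1(\bm x^*;\bm z_n,\bm p)]=-\mathscr{B}[\mathbb{G}(\bm x^*,\bm z_n)\bm p]$, whose blow-up as $\bm z_n\to\bm x^*$ is immediate from the Green's singularity; on the $D_2$ side continuity (since $\bm x^*\notin\overline{D_2}$) yields a finite limit. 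Thus the paper's choice of observable turns what you identify as the hardest analytic step into a one-line consequence of the boundary condition. Your route is the textbook probe-method formulation and would work, but it carries an extra analytical burden that the paper's argument does not.
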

\begin{proof}
Denote $ G=\mathbb{R}_Z^3\backslash\overline{D_1 \cup D_2} $. Suppose that $ \boldsymbol{u}(\boldsymbol{x};\boldsymbol{z},\boldsymbol{p})=\boldsymbol{E}_1(\boldsymbol{x};\boldsymbol{z},\boldsymbol{p})-\boldsymbol{E}_2(\boldsymbol{x};\boldsymbol{z},\boldsymbol{p})$ has the null tangential component $ \boldsymbol{n}\times\boldsymbol{u}(\boldsymbol{x};\boldsymbol{z},\boldsymbol{p})=\mathbf{0} $ on $ \Gamma_R $. Then $ \boldsymbol{u} $ satisfies the boundary value problem
	\begin{equation*}
		\left\{
		\begin{aligned}
			&\nabla\times\nabla\times\boldsymbol{u}-k^2\boldsymbol{u}=\mathbf{0} \quad &&\text{in}~\mathbb{R}_Z^3\backslash\overline{C_R}, \\
			&\boldsymbol{\nu}\times\boldsymbol{u}=\mathbf{0} \quad &&\text{on}~\Gamma^\pm\backslash\Gamma_R^\pm, \\
			&\boldsymbol{n}\times\boldsymbol{u}=\mathbf{0} \quad &&\text{on}~\Gamma_R.
		\end{aligned}
		\right.
	\end{equation*}
By the series expansions (\ref{series expansion of Er})-(\ref{series expansion of Ez}) and the divergence-free condition, we obtain that
\begin{align*}
	A_{nm}^+H_{n+1}^{(1)}(k_mR)-A_{nm}^-H_{n-1}^{(1)}(k_mR)
	&=0, \\
	C_{nm}H_n^{(1)}(k_mR)
	&=0, \\
	\frac{k_m}{2}A_{nm}^+H_n^{(1)}(k_mR)-\frac{k_m}{2}A_{nm}^-H_n^{(1)}(k_mR)-\frac{m\pi}{Z}C_{nm}H_n^{(1)}(k_mR)
	&=0.
\end{align*}
These combined with Lemma \ref{le 3.1} yield $ \boldsymbol{u}(\boldsymbol{x};\boldsymbol{z},\boldsymbol{p})=\mathbf{0} $ in $ \mathbb{R}_Z^3\backslash\overline{C_R} $. Then, by the unique continuation principle and Lemma \ref{Reciprocity relation}, it can be concluded that $ \boldsymbol{u}(\boldsymbol{x};\boldsymbol{z},\boldsymbol{p})=\mathbf{0} $ for all $ \boldsymbol{x}\in\Gamma_{R^\prime} $ and $ \boldsymbol{z} \in G $.

Now assume that $ D_1 \neq D_2 $. Without loss of generality, there exists some $ \boldsymbol{x}^*\in\partial G $ such that $ \boldsymbol{x}^*\in\partial D_1 $ and $ \boldsymbol{x}^*\notin\overline{D_2} $. In particular, let
\begin{equation*}
	\boldsymbol{z}_n
	:=\boldsymbol{x}^*+\frac{1}{n}\boldsymbol{n}(\boldsymbol{x}^*) \in G, \quad n=1,2,\dots,
\end{equation*}
for sufficiently large $ n $. Then, on one hand we obtain that
\begin{equation*}
	\lim_{n\to\infty}\mathscr{B}[\boldsymbol{E}_2(\boldsymbol{x}^*;\boldsymbol{z}_n,\boldsymbol{p})]=\mathscr{B}[\boldsymbol{E}_2(\boldsymbol{x}^*;\boldsymbol{x}^*,\boldsymbol{p})],
\end{equation*}
On the other hand, we find that
\begin{equation*}
	\lim_{n\to\infty}\mathscr{B}[\boldsymbol{E}_1(\boldsymbol{x}^*;\boldsymbol{z}_n,\boldsymbol{p})]=\infty,
\end{equation*}
because of the boundary condition $ \mathscr{B}[\boldsymbol{E}_1(\boldsymbol{x}^*;\boldsymbol{z}_n,\boldsymbol{p})]=-\mathscr{B}[\mathbb{G}(\boldsymbol{x}^*,\boldsymbol{z}_n)\boldsymbol{p}] $. This contradicts that $ \boldsymbol{E}_1(\boldsymbol{x}^*;\boldsymbol{z}_n,\boldsymbol{p})=\boldsymbol{E}_2(\boldsymbol{x}^*;\boldsymbol{z}_n,\boldsymbol{p}) $ for all sufficiently large $ n $, and therefore $ D_1=D_2 $.
\end{proof}

\section{Conclusion}\label{sec 5}
This paper studied both the direct and inverse problems of time-harmonic electromagnetic scattering by an impedance obstacle in a PEC parallel-plate waveguide. The TBC defined by a Calder\'on operator is constructed to reduce the scattering problem into a truncated domain. With the help of properties of the Calder\'on operator, the well-posedness of direct problem is obtained by applying the variational method. Based on the probe method, the uniqueness result of inverse problem follows by deriving the Green's representation formula and reciprocity relation.
Some future extensions to the numerical analysis of the electromagnetic waveguide scattering problems as well analysis and numerical schemes for inverse scattering problems will be considered in the future.

\section*{Acknowledgment}
This work is partially supported by the National Key Research and Development Program of China (No. 2024YFA1016000), the Strategic Priority Research Program of the Chinese Academy of Sciences through Grant No. XDB0640000 and NSFC through Grants No. 12271082, 62231016, 12171465.

\appendix
\section{Proof of Lemma~\ref{lem-dual}}
\label{appendixA}
This appendix presents the proof of Lemma~\ref{lem-dual}. Denote $ \alpha_n=\mathrm{i}n/R $ and $ \beta_m=m\pi/Z $. We can verify that
    \begin{equation}\label{equivalent L2t on GammaR}
        \begin{aligned}
        \langle\bm{u},\bm{v}\rangle_{\Gamma_R}=&~\pi RZ\sum_{m=0}^{\infty}\sum_{n=-\infty}^{\infty}(1+|\alpha_n|^2+\beta_m^2)^{-1}\times \\
            &\left[\left(\alpha_nu_{\theta,s,nm}-(\beta_m-\mathrm{i})u_{z,c,nm}\right)\overline{\left(\alpha_nv_{\theta,s,nm}-(\beta_m-\mathrm{i})v_{z,c,nm}\right)}\right. \\
&\quad+\left(\alpha_nu_{z,c,nm}-(\beta_m+\mathrm{i})u_{\theta,s,nm}\right)\overline{\left(\alpha_nv_{z,c,nm}-(\beta_m+\mathrm{i})v_{\theta,s,nm}\right)} \\
&\quad+\left(\alpha_nu_{\theta,c,nm}+(\beta_m+\mathrm{i})u_{z,s,nm}\right)\overline{\left(\alpha_nv_{\theta,c,nm}+(\beta_m+\mathrm{i})v_{z,s,nm}\right)} \\
&\quad+\left.\left(\alpha_nu_{z,s,nm}+(\beta_m-\mathrm{i})u_{\theta,c,nm}\right)\overline{\left(\alpha_nv_{z,s,nm}+(\beta_m-\mathrm{i})v_{\theta,c,nm}\right)}\right]
        \end{aligned}
    \end{equation}
    and
    \begin{equation}
        |u_{\theta,s,nm}|^2+|u_{z,c,nm}|^2
        =(1+|\alpha_n|^2+\beta_m^2)^{-1}(|\alpha_nu_{\theta,s,nm}-(\beta_m-\mathrm{i})u_{z,c,nm}|^2+|\alpha_nu_{z,c,nm}-(\beta_m+\mathrm{i})u_{\theta,s,nm}|^2),
    \end{equation}
    \begin{equation}
        |u_{\theta,c,nm}|^2+|u_{z,s,nm}|^2
        =(1+|\alpha_n|^2+\beta_m^2)^{-1}(|\alpha_nu_{\theta,c,nm}+(\beta_m+\mathrm{i})u_{z,s,nm}|^2+|\alpha_nu_{z,s,nm}+(\beta_m-\mathrm{i})u_{\theta,c,nm}|^2).
    \end{equation}
    Note that
    \begin{align*}
        &\frac{1}{3}|\alpha_nu_{\theta,s,nm}-(\beta_m-\mathrm{i})u_{z,c,nm}|^2-\frac{1}{2}|u_{z,c,nm}|^2 \\
        &\leq|\alpha_nu_{\theta,s,nm}-\beta_mu_{z,c,nm}|^2 \\
        &\leq|\alpha_nu_{\theta,s,nm}-(\beta_m-\mathrm{i})u_{z,c,nm}|^2+|u_{z,c,nm}|^2,
    \end{align*}
    and some analogous inequalities hold for other terms in (\ref{equivalent L2t on GammaR}). We can obtain the equivalent norms on $ \bm{H}^{-1/2}(\mathrm{Div},\Gamma_R) $ and $ \bm{H}^{-1/2}(\mathrm{Curl},\Gamma_R) $ as
    \begin{equation}\label{equivalent norm for HDiv}
        \begin{aligned}
            \|\bm{u}\|_{\bm{H}^{-1/2}(\mathrm{Div},\Gamma_R)}^2
            \simeq\sum_{m=0}^{\infty}\sum_{n=-\infty}^{\infty}&\left[(1+|\alpha_n|^2+\beta_m^2)^{-1/2}|\alpha_nu_{\theta,s,nm}-(\beta_m-\mathrm{i})u_{z,c,nm}|^2\right. \\
            &+(1+|\alpha_n|^2+\beta_m^2)^{-3/2}|\alpha_nu_{z,c,nm}-(\beta_m+\mathrm{i})u_{\theta,s,nm}|^2 \\
            &+(1+|\alpha_n|^2+\beta_m^2)^{-1/2}|\alpha_nu_{\theta,c,nm}+(\beta_m+\mathrm{i})u_{z,s,nm}|^2 \\
            &+\left.(1+|\alpha_n|^2+\beta_m^2)^{-3/2}|\alpha_nu_{z,s,nm}+(\beta_m-\mathrm{i})u_{\theta,c,nm}|^2\right]
        \end{aligned}
    \end{equation}
    and
    \begin{equation}\label{equivalent norm for HCurl}
        \begin{aligned}
            \|\bm{u}\|_{\bm{H}^{-1/2}(\mathrm{Curl},\Gamma_R)}^2
            \simeq\sum_{m=0}^{\infty}\sum_{n=-\infty}^{\infty}&\left[(1+|\alpha_n|^2+\beta_m^2)^{-3/2}|\alpha_nu_{\theta,s,nm}-(\beta_m-\mathrm{i})u_{z,c,nm}|^2\right. \\
            &+(1+|\alpha_n|^2+\beta_m^2)^{-1/2}|\alpha_nu_{z,c,nm}-(\beta_m+\mathrm{i})u_{\theta,s,nm}|^2 \\
            &+(1+|\alpha_n|^2+\beta_m^2)^{-3/2}|\alpha_nu_{\theta,c,nm}+(\beta_m+\mathrm{i})u_{z,s,nm}|^2 \\
            &+\left.(1+|\alpha_n|^2+\beta_m^2)^{-1/2}|\alpha_nu_{z,s,nm}+(\beta_m-\mathrm{i})u_{\theta,c,nm}|^2\right],
        \end{aligned}
    \end{equation}
    where $ \simeq $ stands for the equivalence. It follows from (\ref{equivalent L2t on GammaR})–(\ref{equivalent norm for HCurl}) and the Cauchy–Schwarz inequality that there exists a constant $ C>0 $ such that
	\begin{equation*}
		\langle\bm{u},\bm{v}\rangle_{\Gamma_R}
		\leq C\|\bm{u}\|_{\bm{H}^{-1/2}(\mathrm{Div},\Gamma_R)}\|\bm{v}\|_{\bm{H}^{-1/2}(\mathrm{Curl},\Gamma_R)}.
	\end{equation*}
	This implies that $ \langle\bm{u},\bm{v}\rangle_{\Gamma_R} $ is well defined for any $ \bm{u}\in\bm{H}^{-1/2}(\mathrm{Div},\Gamma_R) $ and $ \bm{v}\in\bm{H}^{-1/2}(\mathrm{Curl},\Gamma_R) $.

    Define two operators $ \mathscr{C}:\bm{H}^{-1/2}(\mathrm{Div},\Gamma_R)\to\bm{H}^{-1/2}(\mathrm{Curl},\Gamma_R) $ and $ \mathscr{D}:\bm{H}^{-1/2}(\mathrm{Curl},\Gamma_R)\to\bm{H}^{-1/2}(\mathrm{Div},\Gamma_R) $ by
	\begin{align*}
		(\mathscr{C}[\bm{u}])_{\theta,s,nm}
		=&~\left[(\beta_m^2+1)(1+|\alpha_n|^2+\beta_m^2)^{-3/2}-\alpha_n^2(1+|\alpha_n|^2+\beta_m^2)^{-1/2}\right]u_{\theta,s,nm} \\
		&+\alpha_n(\beta_m-\mathrm{i})\left[(1+|\alpha_n|^2+\beta_m^2)^{-1/2}-(1+|\alpha_n|^2+\beta_m^2)^{-3/2}\right]u_{z,c,nm}, \\
		(\mathscr{C}[\bm{u}])_{z,c,nm}
		=&~\alpha_n(\beta_m+\mathrm{i})\left[(1+|\alpha_n|^2+\beta_m^2)^{-3/2}-(1+|\alpha_n|^2+\beta_m^2)^{-1/2}\right]u_{\theta,s,nm} \\
		&+\left[(\beta_m^2+1)(1+|\alpha_n|^2+\beta_m^2)^{-1/2}-\alpha_n^2(1+|\alpha_n|^2+\beta_m^2)^{-3/2}\right]u_{z,c,nm}, \\
		(\mathscr{C}[\bm{u}])_{\theta,c,nm}
		=&~\left[(\beta_m^2+1)(1+|\alpha_n|^2+\beta_m^2)^{-3/2}-\alpha_n^2(1+|\alpha_n|^2+\beta_m^2)^{-1/2}\right]u_{\theta,c,nm} \\
		&+\alpha_n(\beta_m+\mathrm{i})\left[(1+|\alpha_n|^2+\beta_m^2)^{-3/2}-(1+|\alpha_n|^2+\beta_m^2)^{-1/2}\right]u_{z,s,nm}, \\
		(\mathscr{C}[\bm{u}])_{z,s,nm}
		=&~\alpha_n(\beta_m-\mathrm{i})\left[(1+|\alpha_n|^2+\beta_m^2)^{-1/2}-(1+|\alpha_n|^2+\beta_m^2)^{-3/2}\right]u_{\theta,c,nm} \\
		&+\left[(\beta_m^2+1)(1+|\alpha_n|^2+\beta_m^2)^{-1/2}-\alpha_n^2(1+|\alpha_n|^2+\beta_m^2)^{-3/2}\right]u_{z,s,nm}
	\end{align*}
	and
	\begin{align*}
		(\mathscr{D}[\bm{v}])_{\theta,s,nm}
		=&~\left[(\beta_m^2+1)(1+|\alpha_n|^2+\beta_m^2)^{-1/2}-\alpha_n^2(1+|\alpha_n|^2+\beta_m^2)^{-3/2}\right]v_{\theta,s,nm} \\
		&+\alpha_n(\beta_m-\mathrm{i})\left[(1+|\alpha_n|^2+\beta_m^2)^{-3/2}-(1+|\alpha_n|^2+\beta_m^2)^{-1/2}\right]v_{z,c,nm}, \\
		(\mathscr{D}[\bm{v}])_{z,c,nm}
		=&~\alpha_n(\beta_m+\mathrm{i})\left[(1+|\alpha_n|^2+\beta_m^2)^{-1/2}-(1+|\alpha_n|^2+\beta_m^2)^{-3/2}\right]v_{\theta,s,nm} \\
		&+\left[(\beta_m^2+1)(1+|\alpha_n|^2+\beta_m^2)^{-3/2}-\alpha_n^2(1+|\alpha_n|^2+\beta_m^2)^{-1/2}\right]v_{z,c,nm}, \\
		(\mathscr{D}[\bm{v}])_{\theta,c,nm}
		=&\left[(\beta_m^2+1)(1+|\alpha_n|^2+\beta_m^2)^{-1/2}-\alpha_n^2(1+|\alpha_n|^2+\beta_m^2)^{-3/2}\right]v_{\theta,c,nm} \\
		&+\alpha_n(\beta_m+\mathrm{i})\left[(1+|\alpha_n|^2+\beta_m^2)^{-1/2}-(1+|\alpha_n|^2+\beta_m^2)^{-3/2}\right]v_{z,s,nm}, \\
		(\mathscr{D}[\bm{v}])_{z,s,nm}
		=&~\alpha_n(\beta_m-\mathrm{i})\left[(1+|\alpha_n|^2+\beta_m^2)^{-3/2}-(1+|\alpha_n|^2+\beta_m^2)^{-1/2}\right]v_{\theta,c,nm} \\
		&+\left[(\beta_m^2+1)(1+|\alpha_n|^2+\beta_m^2)^{-3/2}-\alpha_n^2(1+|\alpha_n|^2+\beta_m^2)^{-1/2}\right]v_{z,s,nm}.
	\end{align*}
    It is clear to show that
	\begin{align*}
		\alpha_n(\mathscr{C}[\boldsymbol{u}])_{\theta,s,nm}-(\beta_m-\mathrm{i})(\mathscr{C}[\boldsymbol{u}])_{z,c,nm}
		&=(1+|\alpha_n|^2+\beta_m^2)^{1/2}\left(\alpha_nu_{\theta,s,nm}-(\beta_m-\mathrm{i})u_{z,c,nm}\right), \\
		\alpha_n(\mathscr{C}[\boldsymbol{u}])_{z,c,nm}-(\beta_m+\mathrm{i})(\mathscr{C}[\boldsymbol{u}])_{\theta,s,nm}
		&=(1+|\alpha_n|^2+\beta_m^2)^{-1/2}\left(\alpha_nu_{z,c,nm}-(\beta_m+\mathrm{i})u_{\theta,s,nm}\right), \\
		\alpha_n(\mathscr{C}[\boldsymbol{u}])_{\theta,c,nm}+(\beta_m+\mathrm{i})(\mathscr{C}[\boldsymbol{u}])_{z,s,nm}
		&=(1+|\alpha_n|^2+\beta_m^2)^{1/2}\left(\alpha_nu_{\theta,c,nm}+(\beta_m+\mathrm{i})u_{z,s,nm}\right), \\
		\alpha_n(\mathscr{C}[\boldsymbol{u}])_{z,s,nm}+(\beta_m-\mathrm{i})(\mathscr{C}[\boldsymbol{u}])_{\theta,c,nm}
		&=(1+|\alpha_n|^2+\beta_m^2)^{-1/2}\left(\alpha_nu_{z,s,nm}+(\beta_m-\mathrm{i})u_{\theta,c,nm}\right), \\
		\alpha_n(\mathscr{D}[\boldsymbol{v}])_{\theta,s,nm}-(\beta_m-\mathrm{i})(\mathscr{D}[\boldsymbol{v}])_{z,c,nm}
		&=(1+|\alpha_n|^2+\beta_m^2)^{-1/2}\left(\alpha_nv_{\theta,s,nm}-(\beta_m-\mathrm{i})v_{z,c,nm}\right), \\
		\alpha_n(\mathscr{D}[\boldsymbol{v}])_{z,c,nm}-(\beta_m+\mathrm{i})(\mathscr{D}[\boldsymbol{v}])_{\theta,s,nm}
		&=(1+|\alpha_n|^2+\beta_m^2)^{1/2}\left(\alpha_nv_{z,c,nm}-(\beta_m+\mathrm{i})v_{\theta,s,nm}\right), \\
		\alpha_n(\mathscr{D}[\boldsymbol{v}])_{\theta,c,nm}+(\beta_m+\mathrm{i})(\mathscr{D}[\boldsymbol{v}])_{z,s,nm}
		&=(1+|\alpha_n|^2+\beta_m^2)^{-1/2}\left(\alpha_nv_{\theta,c,nm}+(\beta_m+\mathrm{i})v_{z,s,nm}\right), \\
		\alpha_n(\mathscr{D}[\boldsymbol{v}])_{z,s,nm}+(\beta_m-\mathrm{i})(\mathscr{D}[\boldsymbol{v}])_{\theta,c,nm}
		&=(1+|\alpha_n|^2+\beta_m^2)^{1/2}\left(\alpha_nv_{z,s,nm}+(\beta_m-\mathrm{i})v_{\theta,c,nm}\right).
	\end{align*}
	We can obtain from (\ref{equivalent L2t on GammaR})-(\ref{equivalent norm for HCurl}) that
	\begin{equation*}
		\langle\mathscr{D}[\bm{v}],\mathscr{C}[\bm{u}]\rangle_{\Gamma_R}
		=\overline{\langle\bm{u},\bm{v}\rangle_{\Gamma_R}}
	\end{equation*}
	and
	\begin{align*}
		\|\mathscr{C}[\bm{u}]\|_{\bm{H}^{-1/2}(\mathrm{Curl},\Gamma_R)}
		&\simeq\|\bm{u}\|_{\bm{H}^{-1/2}(\mathrm{Div},\Gamma_R)}, \\
		\|\mathscr{D}[\bm{v}]\|_{\bm{H}^{-1/2}(\mathrm{Div},\Gamma_R)}
		&\simeq\|\bm{v}\|_{\bm{H}^{-1/2}(\mathrm{Curl},\Gamma_R)}.
	\end{align*}
which completes the proof.

\bibliographystyle{abbrv}
\bibliography{ref}

\end{document}